\documentclass[11pt]{amsart}
\usepackage{amsmath,amssymb,amsthm,mathrsfs}
\usepackage{epsfig,color}
\usepackage[latin1]{inputenc}
\usepackage[english]{babel}
\usepackage{mathtools}
\usepackage{braket}
\usepackage[toc]{appendix}
\usepackage{enumitem}
\usepackage{ulem}
\usepackage{graphicx}
\usepackage{xfrac, nicefrac}
\usepackage{csquotes}
\usepackage{comment}
\usepackage{lowco2}
\usepackage{esint}
\usepackage{hyperref}
\allowdisplaybreaks

\setlength{\topmargin}{16pt} \setlength{\headheight}{20pt}
\setlength{\headsep}{30pt}
\setlength{\textwidth}{15cm}
\setlength{\textheight}{19cm}
\setlength{\oddsidemargin}{1cm} 
\setlength{\evensidemargin}{1cm}

\numberwithin{equation}{section}

\DeclarePairedDelimiter{\norm}{\lVert}{\rVert}

\newtheorem*{theorem*}{Theorem}
\newtheorem{theorem}{Theorem}[section]
\newtheorem{lemma}[theorem]{Lemma}
\newtheorem{proposition}[theorem]{Proposition}
\newtheorem*{proposition*}{Proposition}
\newtheorem{example}[theorem]{Example}
\newtheorem{corollary}[theorem]{Corollary}
\newtheorem{remark}[theorem]{Remark}
\newtheorem*{remark*}{Remark}

\newtheorem{definition}[theorem]{Definition}

\usepackage{graphicx}
\usepackage{tikz}
\usepackage{pgfplots}

\newcommand{\field}[1]{\mathbb{#1}}
\providecommand{\N}{\field{N}}

\providecommand{\R}{\field{R}}

\usepackage{pgf,tikz,pgfplots}
\usepackage{mathrsfs}
\usetikzlibrary{arrows}
\usepackage{float}

\textwidth=17cm
\hoffset=-1cm

\pgfplotsset{compat=1.18} 
\title{A sharp monomial Caffarelli-Kohn-Nirenberg inequality}
\begin{document}
	
	\begin{abstract}
		We consider a monomial Caffarelli-Kohn-Nirenberg inequality, find the optimal constant and classify the optimizers under an integrated curvature dimension condition. We take advantage of the $\Gamma$-calculus to exploit geometrical techniques to tackle the problem and regularity results to justify some integration by parts. A symmetry-breaking result is also provided. 
	\end{abstract}
	
	\author{Francesco Pagliarin \textsuperscript{1}}
	\address{\textsuperscript{1} Institut Camille Jordan, Universit\'e Claude Bernard Lyon 1, 43 boulevard du 11 Novembre 1918
		69622 Villeurbanne cedex, France}
	\email{pagliarin@math.univ-lyon1.fr}

	\subjclass[2020]{35B65, 35A15, 35A23, 53C21}
	\keywords{CKN inequality, Monomial Sobolev inequality, Symmetry breaking, Regularity of elliptic PDEs, Curvature dimension condition.}
	
	\maketitle
	\section{Introduction}
	In their paper \cite{Ca}, Cabr\'e and Ros-Oton proved a Sobolev inequality with monomial weight $x^A$, finding also the optimal constant and some optimizers, of the form
	\begin{equation}\label{eqcabros}
		\left(\int_{\R^d_*} |f|^{2^*} x^A dx\right)^{2/2^*}\leq C\int_{\R^d_*} |\nabla f|^2 x^A dx\qquad\forall f\in C^\infty_c(\R^d)
	\end{equation}
	where 
	\begin{align}\label{eq1.2}
		\begin{split}
			x^A:=\prod_{i=1}^d|x_i|^{A_i} \quad&\text{is the monomial weight, with }A_i\in \R_+\cup \{0\}\\
			\R^d_*:=\{x\in \R^d: x_i>0\text{ if }A_i >0\}\quad&\text{is the domain of integration}\\
			D:=d+\sum_{i=1}^{d}A_i=:d+|A|\quad&\text{is the monomial dimension}
		\end{split}
	\end{align}
	and
	\[
	2^*:=\frac{2D}{D-2}\quad\text{is the Sobolev's exponent}.
	\]
	This inequality arises while studying some regularity results of elliptic equations defined on domains having certain symmetries. More precisely, in \cite{CabReg}, the same authors employ \eqref{eqcabros} with $d=2$ to prove $L^p$ estimates for stable solutions to some semilinear elliptic equations. 
	The authors find the optimal constant in \eqref{eqcabros} at first proving a monomial isoperimetric inequality, adapting the ABP method as done in \cite{Cab} by Cabr\'e. After this, they use a classical symmetrization technique exploited also by Talenti in \cite{Talenti} for the standard Sobolev inequality. They indeed prove that some optimizers are radially symmetric and have a determined structure. The optimal constant is then easy to compute explicitly being it the quotient of two 1 dimensional integrals. The surprising thing is that, even though the monomial weight is not radially symmetric, all the optimizers have this symmetry up to translations. Observe that, if $u(x):=f(|x|)$ is an optimizer and $A_j=0$ for some $j$, also $f(|x-e_j|)$ is an optimizer, where $e_j$ si the $j$-th vector of the canonical base in $\R^d$.\\
	Another remarkable inequality is Caffarelli-Kohn-Nirenberg's, proved in \cite{Caff}
	\begin{equation}\label{eqckn}
		\left(\int_{\R^d} \frac{|f|^q}{|x|^{bq}}dx\right)^{2/q}\leq C_{a,b}\int_{\R^d}\frac{|\nabla f|^2}{|x|^{2a}}dx\qquad\forall f\in C^\infty_c(\R^d\setminus \{0\})
	\end{equation}
	which is valid if 
	\[
	0\leq b-a\leq 1,\quad a\neq \frac{d-2}{2}.
	\]
	The peculiarity of this inequality, as opposed to \eqref{eqcabros}, is that under some hypotheses on the parameters $a,b$, optimizers cannot be radially symmetric, even though the weights have this symmetry. This phenomenon, called ``symmetry breaking'', was completely described in \cite{Dolbeault} by Dolbeault, Esteban and Loss after having been an open problem for many years. In particular, calling $\alpha:=1+a-\frac{bq}{2}$ and $n=\frac{2q}{q-2}$, the authors established that symmetry breaking occurs if and only if $\alpha^2>\frac{d-1}{n-1}$. Recall that $\alpha^2\leq \frac{d-1}{n-1}$ is also known as ``Felli-Schneider'' condition, from the work \cite{FelliSc} of Felli and Schneider. \\ \ \\
	These different phenomena and opposed behaviour of optimizers of the two inequalities brought us studying a more general one that embodies \eqref{eqcabros} and \eqref{eqckn} as limiting cases. Indeed, in this paper we study the following monomial Caffarelli-Kohn-Nirenberg inequality
	\begin{equation}\label{eqmonckn}
		\left(\int_{\R^d_*}\frac{|f|^p}{|x|^{bp}}x^A dx\right)^{2/p}\leq C\int_{\R^d_*} \frac{|\nabla f|^2}{|x|^{2a}}x^Adx\qquad\forall f\in C^\infty_c(\R^d\setminus \{0\})
	\end{equation}
	finding the optimal constant and classifying optimizers under a curvature bound generalizing the Felli-Schneider condition.
	The approach we follow is based on the $\Gamma$-calculus and it is similar to the one exploited in \cite{Zug} by Dupaigne, Gentil, Zugmeyer. 
	However, unlike the classical CKN inequality, the monomial component adds a difficulty. As a matter of fact, at some point of the paper we deal with solutions to degenerate elliptic equations of the form 
	\begin{equation*}
		-{\rm div}(x^A \nabla u)=x^A f
	\end{equation*}
	for which classical elliptic regularity does not apply. Fortunately, recent results of Terracini, Sire, Vita \cite{ SirTerVit21a, SirTerVit21b, TTV} (for the case $A_1\neq 0$, $A_j=0$ for all $j\geq 2$) and Cora, Fioravanti, Vita and the author \cite{CoraFiorPagliaVita} prove Schauder's estimates for these degenerate/singular elliptic equations. We will take advantage of it to justify some integrations by parts later on.\\
	In addition, the method we follow is slightly the opposite compared to the aforementioned classical one for the study of optimal constants and optimizers in functional inequalities. Indeed, rather than first determining some optimizers and then computing the optimal constant, we begin by providing the optimal constant and only after we find necessary conditions satisfied by the optimizers. This method, which does not employ any symmetrization technique, could allow us to classify all the optimizers.\\ \ \\
	Let us briefly review some recent results on monomial functional inequalities. In \cite{Castro}, Castro proved the validity of a monomial Sobolev inequality where the monomial weight in the two integrals is allowed to have different powers. Moreover, the domain of integration is not $\R^d_*$, but it can be an arbitrary hyperoctagon in the euclidean space. The trade-off is that it does not seem evident computing the optimal constant with this method. \\
	Apart from Sobolev's, other inequalities with monomial weights have been studied recently. In \cite{Ca} for instance, the authors deal also with a Trudinger-Moser inequality with monomial weights, whose sharp version was proved later in \cite{Lam} by Lam. Moreover, one of the most significant inequalities is the isoperimetric one. In \cite{CaRosSer} Cabr\'e, Ros-Oton, Serra extend the monomial isoperimetric inequality in \cite{Ca} to the case of more general homogeneous weights. Eventually, in \cite{Cinti} Cinti, Glaudo, Pratelli, Ros-Oton, Serra manage to characterize all the optimizers of isoperimetric inequalities defined on cones with  weights $w$ that are $\beta$ homogeneous and such that $w^\frac{1}{\beta}$ is concave. In this way they classify all the optimizers also for the monomial isoperimetric inequality, which was an open question from \cite{Ca}. It is also observed in \cite{CaRosSer} that the hypothesis of the concavity of $w^{\frac{1}{\beta}}$ is equivalent to a pointwise curvature dimension condition. We will see later that, at least for Sobolev's inequality, this hypothesis is not optimal, since we will work under a weaker hypothesis of integrated curvarture dimension condition.
	\\ \ \\
	Before delving into our results, it is necessary to introduce some notation. As in \cite{Ca}, we keep the same definition \eqref{eq1.2} for the domain of integration $\R^d_*$ and the monomial weight $x^A$ observing that we will always consider the exponents $A_i\geq 0$. Recall that the monomial dimension $D$ is defined as the sum between the topological dimension $d$ and the exponents in $x^A$, namely
	\[
	D:=d+\sum_{i=1}^d A_i=:d+|A|.
	\]
	We prove in the Appendix \ref{sec4} that \eqref{eqmonckn} is valid for the following range of parameters
	\begin{align}\label{eq1.3}
		0\leq b-a\leq 1\quad \text{and }a\neq a_c:=\frac{D-2}{2}.
	\end{align}
	However, for simplicity we do not consider the case $b=a+1$ (monomial Hardy inequality, see  \cite{Duy} by Duy, Lam, Lu) and we stay in the range $a<a_c$ since the case $a>a_c$ can be treated adapting the property of modified inversion symmetry of the classical CKN inequality,  see \cite{CatrinaWang} by Catrina and Wang. Thus, from now on we consider $a,b$ such that 
	\begin{equation}\label{eqhpab}
		0\leq b-a<1,\quad a<a_c
	\end{equation}
	In addition, by a classical scaling argument, the Sobolev exponent $p$ in \eqref{eqmonckn} is fixed and equal to
	\begin{align*}
		p=\frac{2D}{D-2+2(b-a)}.
	\end{align*}
	As in the standard Sobolev's inequality, there exists a number $n\in \R_+$, not necessarily an integer, such that
	\begin{equation*}
		p=\frac{2n}{n-2}.
	\end{equation*}
	It is then easy to verify that the three notions of dimensions that we just introduced are ordered as follows
	\begin{equation*}
		d\leq D\leq n.
	\end{equation*}
	Lastly, define another parameter, $\alpha$ as
	\begin{equation*}
		\alpha:=1+a-\frac{bp}{2}
	\end{equation*}
	which, under \eqref{eqhpab}, is such that $\alpha > 0$. \\
	Observe that from the definitions it is easy to verify that the following equalities hold
	\begin{align}\label{eq1.12}
		\begin{cases}
			\alpha n=D-bp \\
			2a=(D-2)-\alpha(n-2).\\
		\end{cases}
	\end{align}
Define $K:=\{i\in\{1,...,d\} \text{ such that } A_i\neq 0\}$ and $k:=|K|$.
	We are now in the position to state our main result.
	\begin{theorem}\label{thm1.1}
		Under hypotheses \eqref{eqhpab}, $n>4$, $A_d=0$ and
		\begin{equation}\label{FScond}
			\alpha^2\leq \frac{D-1}{n-1}
		\end{equation}
		the optimal constant $C_{opt}$ in \eqref{eqmonckn} is equal to 
		\begin{equation}\label{eq1.14}
			C_{opt}=\frac{4}{\alpha^2 n(n-2)Z^{\frac{2}{n}}}
		\end{equation}
		where $$Z=\frac{ D\sqrt{\pi}}{\alpha}\frac{\Gamma(\frac{A_1+1}{2})\cdot...\cdot \Gamma(\frac{A_d+1}{2})}{2^k \Gamma(1+\frac{D}{2})} \frac{\Gamma\left(\frac{n}{2}\right)}{\Gamma\left(\frac{n+1}{2}\right)}.$$  Moreover, if $\alpha^2\leq  \frac{D-1}{n-1}<1$, then all the optimizers are radial and they have the following form
		\begin{equation}\label{eqopt}
			f(x)=\left(\frac{1}{s+t|x|^{2\alpha}}\right)^\frac{n-2}{2}\quad\text{with }s,t>0.
		\end{equation}
	\end{theorem}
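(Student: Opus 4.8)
The plan is to recast \eqref{eqmonckn} as a sharp Sobolev inequality on a weighted cone and then run the $\Gamma$-calculus machinery of \cite{Zug}. First I would pass to polar coordinates $x=r\omega$, $r=|x|$, $\omega\in\Sigma:=S^{d-1}\cap\R^d_*$, set $d\nu:=\omega^A\,d\sigma$ with $\omega^A:=\prod_i|\omega_i|^{A_i}$, and perform the Emden--Fowler substitution $y=r^{\alpha}$. Using the algebraic relations \eqref{eq1.12}, a direct computation shows that $|x|^{-bp}|f|^p x^A\,dx$ becomes $\tfrac1\alpha\,y^{n-1}|f|^p\,dy\,d\nu$ while $|x|^{-2a}|\nabla f|^2 x^A\,dx$ becomes $\alpha\,y^{n-1}\big[(\partial_y f)^2+\alpha^{-2}y^{-2}|\nabla_\omega f|^2\big]\,dy\,d\nu$; hence \eqref{eqmonckn} is equivalent to
\[
\left(\int_{M}|f|^p\,d\mu\right)^{2/p}\le C\,\alpha^{2(n-1)/n}\int_{M}\big[(\partial_y f)^2+\alpha^{-2}y^{-2}|\nabla_\omega f|^2\big]\,d\mu ,
\]
which is the Sobolev inequality on the cone $M=\R_+\times\Sigma$ with warped metric $g=dy^2+\alpha^2 y^2 g_\Sigma$ and reference measure $d\mu=y^{n-1}\,dy\,d\nu$, a metric-measure structure of effective dimension $n$. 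In these coordinates the candidate \eqref{eqopt} is exactly the Aubin--Talenti profile $(s+t y^2)^{-(n-2)/2}$, a function of $y$ alone, and $Z$ is its $\mu$-normalisation: its angular part is $\nu(\Sigma)=\tfrac{2}{2^k\Gamma(D/2)}\prod_i\Gamma(\tfrac{A_i+1}{2})$ (a Gaussian integral over $\R^d_*$), its radial part is the Beta integral $\int_0^\infty y^{n-1}(1+y^2)^{-n}\,dy$, and the $\Gamma$-duplication formula reassembles these, together with the $\alpha$-bookkeeping, into the stated expression for $Z$ and the value \eqref{eq1.14} of $C_{opt}$.

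Secondly I would prove the reduced inequality with constant $C_{opt}$ by the $\Gamma$-calculus scheme of \cite{Zug}, which yields the sharp Sobolev inequality on a weighted manifold directly from an integrated curvature-dimension hypothesis, with no a priori knowledge of the optimizers. The heart is a Bochner/$\Gamma_2$ computation on $(M,g,\mu)$ followed by two integrations by parts; the remainder reorganises into a sum of manifestly nonnegative squares (formed from the traceless Hessian) plus a single angular term, nonnegative precisely when the weighted sphere $(\Sigma,g_\Sigma,\nu)$ has spectral gap $\lambda_1(\Sigma,\nu)\ge\alpha^{2}(n-1)$. A Lichnerowicz-type computation shows $(\Sigma,g_\Sigma,\nu)$ is $CD(D-2,D-1)$, whence $\lambda_1(\Sigma,\nu)\ge D-1$; and because $A_d=0$ makes the coordinate function $\omega_d$ an eigenfunction of the weighted Laplacian of $\Sigma$ with eigenvalue exactly $D-1$, one in fact has $\lambda_1(\Sigma,\nu)=D-1$. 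Thus the remainder is nonnegative exactly under \eqref{FScond}, the inequality holds with constant $C_{opt}$, and since \eqref{eqopt} realises equality, $C_{opt}$ is optimal. The integrations by parts are delicate: transported back to $\R^d$ they involve the degenerate elliptic operators $-{\rm div}(x^A\nabla\,\cdot\,)$ of the introduction, and it is the Schauder estimates of \cite{SirTerVit21a,SirTerVit21b,TTV,CoraFiorPagliaVita}, plus the appropriate decay near $\{x_i=0\}$, the origin and infinity, that make them rigorous; I would record this as a preliminary lemma, and it is here that $n>4$ (equivalently $p<4$) is used, to guarantee integrability of every term appearing in the double integration by parts.

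For the classification I would proceed as follows. Assuming further $\alpha^2\le\frac{D-1}{n-1}<1$, let $u$ be any optimizer; being a critical point, $u$ solves $-{\rm div}(|x|^{-2a}x^A\nabla u)=\lambda\,|x|^{-bp}x^A u^{p-1}$, and by the quoted regularity theory $u$ is smooth up to the coordinate hyperplanes with the needed decay, so the $\Gamma_2$ identity of the previous step holds along $u$ with equality. Tracking the equality cases, the vanishing of the nonnegative squares confines $u$, in the $(y,\omega)$ variables, to an Aubin--Talenti profile in $y$ possibly twisted by an angular factor, while the strict inequality $\alpha^{2}(n-1)<D-1=\lambda_1(\Sigma,\nu)$ — which is exactly where $\frac{D-1}{n-1}<1$ enters, via $d\le D\le n$ — forces the angular term to be positive unless $\nabla_\omega u\equiv 0$ (at the Felli--Schneider boundary $\alpha^2=\frac{D-1}{n-1}$ this requires an additional second-order equality analysis, but the conclusion persists). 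Hence $u$ is $\Sigma$-constant, i.e.\ on $\R^d$ it has the form \eqref{eqopt}. The value $\frac{D-1}{n-1}=1$, namely $b=a$, is excluded because it describes the monomial Sobolev regime of \cite{Ca}, whose partial translation invariance in the directions $x_j$ with $A_j=0$ produces genuinely non-radial optimizers.

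I expect the main obstacle to be twofold. On the geometric side, one must organise the $\Gamma_2$ remainder so that the cross-section enters only through the \emph{integrated} quantity $\lambda_1(\Sigma,\nu)=D-1$, rather than through the stronger pointwise ``concavity of $w^{1/\beta}$'' condition of \cite{CaRosSer,Cinti} — this gain is precisely what makes \eqref{FScond} the sharp threshold — and it is in this reduction that the hypotheses $A_d=0$ (so the extremal angular eigenfunction exists) and $n>4$ (integrability of the Hessian terms) are genuinely needed. On the analytic side, the crux is legitimising the double integration by parts for the degenerate weight $x^A$: combining the Schauder regularity of \cite{SirTerVit21a,SirTerVit21b,TTV,CoraFiorPagliaVita} with sharp boundary and decay estimates, so that no boundary contributions survive on $\{x_i=0\}$ nor at $0$ and $\infty$, is the price for working with a weight that is not radially symmetric.
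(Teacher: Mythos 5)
Your high-level plan is right: pass to a warped-product picture, identify the cross-section $(\mathbb{S}^{d-1}_*,\theta^A dV)$ and prove it satisfies $CD(D-2,D-1)$, feed this into a $\Gamma_2$-type argument, and use the regularity theory of \cite{CoraFiorPagliaVita} to justify the integrations by parts. Your observation that $A_d=0$ makes $\theta_d$ an eigenfunction of $-L_\theta$ with eigenvalue exactly $D-1$ (so $\lambda_1=D-1$) is correct and a nice explanation for why \eqref{FScond} is the sharp threshold, and your computation of $Z$ via the angular monomial integral and the radial Beta integral matches the paper. But there is one genuine missing idea and several places where the proposal is too vague to be a proof.

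The missing idea is the $n$-conformal compactification. You describe the Bochner/$\Gamma_2$ computation ``on $(M,g,\mu)$'' where $M=\mathbb{R}_+\times\Sigma$, but that space has \emph{infinite} $\mu$-measure, constants are not admissible, and the Aubin--Talenti profiles are not constants, so the Bakry-style $\Gamma_2$/integrated-$CD$ scheme you invoke does not close there. What the paper (and \cite{Zug}) actually does is pass from the cone $E$ to a second weighted manifold $S=(-1,1)\times_f\mathbb{S}^{d-1}_*$ with $d\mu_S=\varphi^{-n}d\mu_E$ \emph{finite}, via an $n$-conformal change and a Yamabe-type equation (Proposition \ref{sobinvariance}); on $S$, the constant $\mathbf{1}$ is admissible and is exactly the extremal. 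This compactification is what produces the \emph{tight} Sobolev inequality \eqref{eqsobS}, the lower bound $C_{opt}\ge 4/(\alpha^2 n(n-2)Z^{2/n})$ by simply testing with $\mathbf 1$, and the Rellich compactness \eqref{eqrellich} needed for minimizers to exist. Without it your ``two integrations by parts on $M$'' has no mechanism to single out the sharp constant.

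Relatedly, the proposal skips the actual engine of the argument: the paper does not run a direct $\Gamma_2$/flow computation at the critical exponent. It considers, for each subcritical $q<p$, the minimization problem $I(A_q)$ on $S$, uses compactness to obtain a positive minimizer $v$, invokes the Schauder theory of \cite{CoraFiorPagliaVita} and a Markov-semigroup argument to get two-sided bounds and $\Gamma_S(v)\in L^2$, then sets $\Phi=v^{-(q-2)/2}$ and multiplies the Euler--Lagrange equation by $L_S(\Phi^{1-\nu})\zeta_k$; after the (carefully justified) integrations by parts, the integrated $CD(\alpha^2(n-1),n)$ condition of Theorem \ref{thmintcd} forces $\Phi$ to be constant, hence $I(A_q)=1$, and only then does one let $q\uparrow p$. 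The role of $n>4$ is precisely to make the cutoff functions $\zeta_k$ converge in $D(L_S)$ (Proposition \ref{prop1h0}) and to give the $L^2$ bound on $\Gamma_S(v)$ near $y=\pm 1$ — not, as you say, ``$p<4$.'' (Your equivalence $n>4\iff p<4$ is arithmetically correct, but the reason for the hypothesis is the radial weight $(1-y^2)^{n/2-1}$ on $S$, not any exponent on $f$.) Finally, for the borderline classification $\alpha^2=\frac{D-1}{n-1}<1$, the ``additional second-order analysis'' you gesture at is the refined inequality \eqref{eq5.62b}, which retains a positive multiple of $\int\Gamma^\theta(f)^2 f^{-1-\nu}$ via the Beckner-type estimate from \cite{GentZug}; this should be made explicit rather than asserted.

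In short: your cross-section curvature analysis and the identification of the threshold are sound, but the proof as written cannot go through on the cone. You need to insert the $n$-conformal change to the finite-measure model $S$, the subcritical approximation $q\uparrow p$, and the regularity/approximation lemmas that make the double integration by parts legitimate — these are not technicalities, they are the proof.
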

	Let us briefly comment on the hypotheses of the theorem. At first, we want $n>4$ to prove some regularity results 
	and density theorems of smooth functions with compact support in a suitable function space. However, it might not be an optimal hypothesis. Secondly, \eqref{FScond} has a geometrical meaning since it provides an integrated bound on the curvature of our weighted manifold, see \eqref{eqintcd} below. In the classical CKN inequality, as specified in \cite{Dolbeault}, with $d=D$, \eqref{FScond} reduces to the aforementioned Felli-Schneider condition. Moreover, in \cite{DSZ} Dupaigne, Gentil, Simonov, Zugmeyer proved that if $\alpha^2>\frac{D-1}{n-1}$ optimizers cannot be radial. Thus, hypothesis \eqref{FScond} is optimal. Finally, $A_d=0$ allows us to describe our space as a warped product of an interval with a weighted sphere and take advantage of the geometrical results proved in \cite{DSZ}, see \eqref{eqwarped} below. The case where all the exponents $A_j$ are diffenent from zero is still unknown and might be treated in a following paper.
	Eventually, let us insist that \eqref{FScond} provides an integrated curvature dimension condition, which is optimal for \eqref{eq1.14} to hold and it is weaker than the concavity of $w^{\frac{1}{\beta}}$ in \cite{CaRosSer}, as commented above.
	\begin{remark}
		The classification of optimizers in the case $\alpha^2=\frac{D-1}{n-1}=1$, where the inequality reduces to the monomial Sobolev one \eqref{eqcabros}, is more delicate. Indeed, if we wanted to apply the same approach, the classification of optimizers would be linked to the classification of eigenfunctions for the eigenvalue $D-1$ of the operator $-L_\theta$, defined later. Unfortunately, we are not able for the moment to provide it. Instead, this case has already been treated by Nguyen in \cite{Ngu} using other techniques. The author adapts the ones belonging to optimal transport developped in \cite{CorderoNazVillani} by Cordero, Nazar\'e, Villani to prove the sharp version the of monomial Sobolev inequality and to classify the optimizers. The proof is essentially based on the existence of the Brenier map where no arguments of compactness or symmetrization are employed. Eventually, in \cite{Ngu} the author proved that all the optimizers in \eqref{eqcabros} are equal to functions of the form \eqref{eqopt} up to a traslation with respect to the variables which are not charged by the monomial weight.
	\end{remark}
	To conclude, we prove a corollary for the CKN inequality on the whole space $\R^d$ where a symmetry breaking phenomenon occurs. Consider
	\begin{equation}\label{eqsobRd}
		\left(\int_{\R^d}\frac{|f|^p}{|x|^{bp}}x^A dx\right)^{2/p}\leq C\int_{\R^d} \frac{|\nabla f|^2}{|x|^{2a}}x^Adx\qquad\forall f\in C^\infty_c(\R^d\setminus \{0\}). 
	\end{equation}
	 Up to relabelling, suppose that only the first $k$ coordinates are charged by non-vanishing exponents $A_i$ and define the $2^k$ Weyl chambers $\R^d_\epsilon:=\bigcap_{i\in K}\{\epsilon_i x_i>0\}$ where $\epsilon\in \{-1,1\}^k$.
	\begin{corollary}
		Under hypotheses \eqref{eqhpab}, $n>4$, $A_d=0$ and
		\begin{equation*}
			\alpha^2\leq \frac{D-1}{n-1},
		\end{equation*}
		\eqref{eqsobRd} holds with 
		\begin{equation}\label{eqoptC}
			C=\frac{4}{\alpha^2 n(n-2)Z^{\frac{2}{n}}}=:C_{opt, \R^d_*},
		\end{equation}
	the optimal constant of Theorem \ref{thm1.1}.
		In addition, if
		\begin{equation}\label{hpAi}
			A_i\notin (0,1)\quad  \forall  i,
		\end{equation}
		then \eqref{eqoptC} is optimal and all the optimizers are equal to
		\begin{align}\label{eqoptRd}
			f(x)=\left(\frac{1}{s+t|x|^{2\alpha}}\right)^\frac{n-2}{2}\chi_{\R^d_\epsilon}\qquad\text{with }s,t>0,\quad\epsilon\in \{-1,1\}^{k}
		\end{align}
		where $\chi_{\R^d_\epsilon}$ denotes the characteristic function on the Weyl chamber $\R^d_\epsilon$.
	\end{corollary}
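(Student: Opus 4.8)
The plan is to deduce inequality \eqref{eqsobRd} from the inequality of Theorem \ref{thm1.1} on $\R^d_*$ by splitting $\R^d$, up to a Lebesgue--null set, into the $2^k$ Weyl chambers $\R^d_\epsilon$ ($\epsilon\in\{-1,1\}^k$) and exploiting the sign reflections $R_\epsilon\colon x\mapsto\big((\epsilon_i x_i)_{i\in K},(x_i)_{i\notin K}\big)$, which are involutive linear isometries with $R_\epsilon(\R^d_*)=\R^d_\epsilon$ (so $\R^d_*=\R^d_{(1,\dots,1)}$), leave $x^A$, $|x|$ and $|\nabla\cdot|$ invariant, and preserve $C^\infty_c(\R^d\setminus\{0\})$. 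Then, for $f\in C^\infty_c(\R^d\setminus\{0\})$ and $a_\epsilon:=\int_{\R^d_\epsilon}|f|^p|x|^{-bp}x^A\,dx$, $b_\epsilon:=\int_{\R^d_\epsilon}|\nabla f|^2|x|^{-2a}x^A\,dx$, a change of variables identifies these with the corresponding integrals of the admissible test function $f\circ R_\epsilon$ over $\R^d_*$, so Theorem \ref{thm1.1} gives $a_\epsilon^{2/p}\le C_{opt,\R^d_*}\,b_\epsilon$ for each $\epsilon$. Summing over $\epsilon$ and using that $s\mapsto s^{p/2}$ is superadditive on $[0,\infty)$ (since $p>2$), i.e. $\sum_\epsilon b_\epsilon^{p/2}\le\big(\sum_\epsilon b_\epsilon\big)^{p/2}$, yields $\sum_\epsilon a_\epsilon\le C_{opt,\R^d_*}^{p/2}\big(\sum_\epsilon b_\epsilon\big)^{p/2}$, which is \eqref{eqsobRd} with $C=C_{opt,\R^d_*}$. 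This step uses only the hypotheses of Theorem \ref{thm1.1}, not \eqref{hpAi}.

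For optimality of the constant and the classification under \eqref{hpAi} I would work in the completions $W(\R^d)$, $W(\R^d_*)$ of $C^\infty_c(\R^d\setminus\{0\})$ (restricted to the respective domains) under $\|g\|^2:=\int|\nabla g|^2|x|^{-2a}x^A\,dx$; by the inequalities already proved the weighted $L^p$ integrals in \eqref{eqsobRd}, \eqref{eqmonckn} are finite and continuous on these spaces, so the optimal constants may be taken as suprema of the Rayleigh quotient over them. The crucial input, and the only place \eqref{hpAi} enters, is a capacity statement: when $A_i\ge 1$ for every $i\in K$, each hyperplane $\{x_i=0\}$, $i\in K$, has zero capacity for this Dirichlet form. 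Away from the origin the weight is comparable to $|x_i|^{A_i}$ times a bounded factor, which reduces matters to the one-dimensional fact that $\{0\}$ has zero capacity for $u\mapsto\int|u'|^2|t|^{A_i}\,dt$ when $A_i\ge1$ (a linear cut-off works for $A_i>1$, a logarithmic one at the threshold $A_i=1$), while the origin itself has zero capacity because $D-2a=2+\alpha(n-2)>0$ by \eqref{eq1.12}. Two consequences follow: (i) for any $h\in W(\R^d_*)$ the function equal to $h\circ R_\epsilon$ on $\R^d_\epsilon$ and to $0$ elsewhere belongs to $W(\R^d)$ with the same $W$- and weighted $L^p$-norms, obtained as a $W(\R^d)$-limit of products of $C^\infty_c$ approximants of $h$ with cut-offs vanishing near the hyperplanes $\{x_i=0\}$, $i\in K$; (ii) conversely, $u\mapsto v_\epsilon:=(u|_{\R^d_\epsilon})\circ R_\epsilon$ extends to a contraction $W(\R^d)\to W(\R^d_*)$ satisfying $\int_{\R^d}|\nabla u|^2|x|^{-2a}x^A\,dx=\sum_\epsilon\int_{\R^d_*}|\nabla v_\epsilon|^2|x|^{-2a}x^A\,dx$ and the analogous identity for the weighted $L^p$ integrals.

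I would then conclude as follows. Extending the optimizer $F(x)=(s+t|x|^{2\alpha})^{-(n-2)/2}$ of Theorem \ref{thm1.1} by zero outside $\R^d_*$ produces, by (i), an element of $W(\R^d)$ with the same Rayleigh quotient $C_{opt,\R^d_*}$; together with the first part this shows $C=C_{opt,\R^d_*}$ is optimal and that \eqref{eqoptRd} with $\epsilon=(1,\dots,1)$ is an optimizer. Conversely, if $u\in W(\R^d)$ is an optimizer, write $C:=C_{opt,\R^d_*}$ and $A_\epsilon:=\int_{\R^d_*}|v_\epsilon|^p|x|^{-bp}x^A\,dx$, $B_\epsilon:=\int_{\R^d_*}|\nabla v_\epsilon|^2|x|^{-2a}x^A\,dx$ with $v_\epsilon$ as in (ii); Theorem \ref{thm1.1} gives $A_\epsilon^{2/p}\le C B_\epsilon$, hence by (ii)
\[
C^{p/2}\Big(\sum_\epsilon B_\epsilon\Big)^{p/2}=\sum_\epsilon A_\epsilon\le\sum_\epsilon\big(C B_\epsilon\big)^{p/2}=C^{p/2}\sum_\epsilon B_\epsilon^{p/2}\le C^{p/2}\Big(\sum_\epsilon B_\epsilon\Big)^{p/2},
\]
so every inequality is an equality. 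Strict superadditivity of $s\mapsto s^{p/2}$ then forces $B_\epsilon=0$, whence $A_\epsilon=0$ and $v_\epsilon\equiv0$, for all $\epsilon$ but one $\epsilon_0$; and $A_{\epsilon_0}^{2/p}=C B_{\epsilon_0}$ makes $v_{\epsilon_0}\in W(\R^d_*)$ an optimizer of \eqref{eqmonckn}, so by the classification in Theorem \ref{thm1.1} (applicable when $\alpha^2\le\frac{D-1}{n-1}<1$) $v_{\epsilon_0}(x)=(s+t|x|^{2\alpha})^{-(n-2)/2}$ for some $s,t>0$. Undoing $R_{\epsilon_0}$ and using $|R_{\epsilon_0}x|=|x|$ gives $u=(s+t|x|^{2\alpha})^{-(n-2)/2}\chi_{\R^d_{\epsilon_0}}$, i.e. \eqref{eqoptRd}.

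The main obstacle is the capacity statement and the resulting decomposition of $W(\R^d)$ over the chambers ``with no transmission condition'' across the coordinate hyperplanes: it is precisely because for $A_i\ge1$ the Dirichlet energy can be concentrated in a single Weyl chamber at no extra cost that one obtains this discrete symmetry breaking, and the argument genuinely fails when some $A_i\in(0,1)$, where $\{x_i=0\}$ carries a nontrivial trace and $\chi_{\R^d_\epsilon}$ is not reachable inside $W(\R^d)$. The remaining points --- continuity of the $L^p$ functional along $W$-convergent sequences, the legitimacy of restricting elements of the completion to chambers, and the diagonal argument producing the approximating cut-offs --- are routine once \eqref{eqmonckn} is in hand.
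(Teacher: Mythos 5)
Your proof follows essentially the same route as the paper's: decomposition of $\R^d$ into Weyl chambers combined with the superadditivity of $s\mapsto s^{p/2}$ gives \eqref{eqsobRd} with $C=C_{opt,\R^d_*}$, and forcing equality in that chain localizes the optimizer to a single chamber, where Theorem \ref{thm1.1} applies. The one place you diverge is in justifying that, under \eqref{hpAi}, the optimizer of Theorem \ref{thm1.1} extended by zero lies in the completion over $\R^d$ and, conversely, that elements of this completion can be cut along the coordinate hyperplanes: the paper cites \cite[Lemmas 3.3--3.4]{CoraFiorPagliaVita} for this density statement, whereas you re-derive it from a zero-capacity argument for $\{x_i=0\}$ when $A_i\ge 1$ --- the same underlying lemma, just proved rather than quoted.
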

	We do not know whether \eqref{eqoptC} is still optimal if we remove hypothesis \eqref{hpAi}. If it were, however, it could not be attained. As a matter of fact, functions of the form \eqref{eqoptRd}, the only candidates to be optimizers, do not belong to the suitable weighted Sobolev space on $\R^d$.\\
	In any case, this result is rather surprising since it tells that, under suitable hypotheses, no optimizer for \eqref{eqsobRd} can be radial on the whole space $\R^d$. The proof follows.
	\begin{proof}
		Consider a function $\varphi\in C^\infty_c(\overline{\R^d_*}\setminus\{0\})$. 
		Since we work under hypothesis \eqref{hpAi}, \cite[Lemma 3.3 and Lemma 3.4]{CoraFiorPagliaVita} tell that $\varphi$ can be approximated in the suitable weighted Sobolev space by functions $\varphi_h\in C^\infty_c(\R^d_*)\subset C^\infty_c(\R^d\setminus\{0\})$. Therefore,
		\begin{align*}
			C_{opt,\R^d_*}\leq C_{opt,\R^d}
		\end{align*}
		where we denote by $C_{opt,\R^d}$ the optimal constant of \eqref{eqsobRd}.
		So, if we prove that \eqref{eqsobRd} is valid with $C=C_{opt,\R^d_*}$, this is necessarily optimal. Let us now reason as in \cite[Proposition 6.6]{Veli}. For $u\in  C^\infty_c(\R^d\setminus\{0\})$ we have
		\begin{align}\label{eq1.16}
			\begin{split}
				&\left(\int_{\R^d} \frac{|u|^p}{|x|^{bp}} x^Adx\right)^\frac{2}{p}=\left(\sum_{\epsilon\in\{-1,1\}^{k}} \int_{\R^d_\epsilon}\frac{|u|^p}{|x|^{bp}} x^Adx\right)^\frac{2}{p}\\&\leq C_{opt,\R^d_*}\left(\sum_{\epsilon\in\{-1,1\}^{k}} \left(\int_{\R^d_\epsilon} \frac{|\nabla u|^2}{|x|^{2a}}x^A dx\right)^\frac{p}{2}\right)^\frac{2}{p}\leq C_{opt,\R^d_*} \int _{\R^d} \frac{|\nabla u|^2}{|x|^{2a}}x^A dx
			\end{split}
		\end{align}
		where in the last inequality we used that $\norm{y_n}_{l^\frac{p}{2}(\N)}\leq \norm{y_n}_{l^1(\N)}$ for any sequence $\{y_n\}_{n\in\N}\in l^1(\N)$. This implies in turn that $C_{opt,\R^d}=C_{opt,\R^d_*}$.  Let us suppose now that in \eqref{eq1.16} we have equality everywhere.  Since $\norm{y_n}_{l^\frac{p}{2}(\N)}= \norm{y_n}_{l^1(\N)}$ if and only if there exist $j$ such that $y_i=0$ for all $i\neq j$, $u$ needs to be supported on one Weyl chamber only. Eventually, Theorem \ref{thm1.1} allows us to conclude that all the optimizers of \eqref{eqsobRd} are of the form \eqref{eqoptRd}.
	\end{proof}
	The paper is organized as follows. 
	In section \ref{sec2} we introduce the notion of $\Gamma$-calculus and the geometric setting of weighted manifolds where we set the problem. In addition, the integrated curvature dimension condition is proved.  In section \ref{sec3} we prove our main theorem and eventually in the Appendix \ref{sec4} we prove some technical lemmas and the validity of \eqref{eqmonckn} with a constant, which need not be optimal.
	
	\section{Geometric, functional setting and curvature}\label{sec2}
	The first part of this section is devoted to introducing the $\Gamma$-calculus notation. We begin by the definition of weighted manifold and associated operators.
	\begin{definition}
		A weighted manifold is the given of a triple $(M,g,d\mu)$ where $(M,g)$ is a smooth Riemannian manifold 
		and $d\mu=e^{-W}dV_g$ \footnote{Recall that $dV_g$ is the volume form induced by the metric $g$. In a fixed chart it is $dV_g=\sqrt{|g|}dx$ where $dx$ is the Lebesgue measure and $|g|=\text{det}(g_{ij})$. By $g^{ij}$ we denote the inverse of the metric.}is a measure associated to it where $W$ is a smooth function in $M$.
		Fix now $f,h$ smooth functions on $M$ and define the following operators
		\begin{align*}
			\text{Carr\'e du champ:}&\quad\Gamma(f,h):=\nabla_g f \cdot \nabla_g h\\
			\text{Infinitesimal generator:}&\quad Lf:=\Delta_g f -\Gamma(W,f)\\
			\text{Iterated carr\'e du champ:}&\quad \Gamma_2(f):=L\frac{\Gamma(f)}{2}-\Gamma(f,Lf)
		\end{align*}
		where $\Gamma(f):=\Gamma(f,f)$. In addition, $\nabla _g f$ is the Riemannian gradient of $f$ and $\Delta_g f$ is the Laplace-Beltrami operator, i.e. $\Delta_g f=\frac{1}{\sqrt{|g|}}\sum_{i,j}\partial_j(g^{ij}\sqrt{|g|}\partial_i f)$ in a fixed chart where $\partial_i$ denote the standard derivatives in $\R^d$. 
	\end{definition}
	\subsection{Integration by parts} \ \\
	We remark that the definition of $L$ is very natural, since it comes out when considering an integration by parts formula when $M$ has no boundary (to be intended in the sense of differential geometry). Namely, it holds
	\begin{equation}\label{eqippclassic}
		-\int_M h L f d\mu=\int_M \Gamma(h,f)d\mu\qquad\forall f,h \in C_c^\infty(M).
	\end{equation} 
	Let us make now an example where we take $M=\Omega$ as an open set in $\R^d$ equipped with the euclidean metric and a measure $d\mu=e^{-W}dx$. We want to see how \eqref{eqippclassic} changes when, instead of taking $f,h\in C^\infty_c(\Omega)$, we allow $f,h$ to be different from zero on $\partial \Omega$.
	\begin{example}
		Let $\Omega\subset \R^d$ be a piecewise smooth open set. 
		Fix $f\in C^2_c(\overline{\Omega})$ and $h\in C^1(\overline{\Omega})$. Assume that $e^{-W}$ is smooth and strictly positive in \text{supp}$f$. 
		Then,
		\begin{equation}\label{eqipptrue}
			-\int_\Omega h Lf d\mu=\int_\Omega \Gamma (h,f)d\mu- \int_{\partial \Omega} h g^{ij}\partial_j f \nu_i e^{-W}\sqrt{|g|}d\mathcal{H}^{d-1},
		\end{equation} 
		where $\nu$ is the outward vector field of $\partial \Omega$ and $d\mathcal{H}^{d-1}$ is the $d-1$ Hausdorff measure.\\
		Indeed,
		\begin{align*}
			\begin{split}
				-\int_\Omega h Lf d\mu=&-\int_\Omega h\left(\Delta_g f-\Gamma(W,f)\right)e^{-W}\sqrt{|g|}dx\\
				=&-\int_\Omega  h \left(\frac{1}{\sqrt{|g|}}\sum_{i,j}\partial_i(\sqrt{|g|}g^{ij}\partial_j f)-g^{ij}\partial_iW\partial_j f\right)e^{-W}\sqrt{|g|}dx\\
				=&\int_\Omega \left(\partial_i(h e^{-W})g^{ij}\partial_j f-h g^{ij}\partial_i W\partial_j f  e^{-W}\right)\sqrt{|g|}dx-\int_{\partial \Omega} h g^{ij}\partial_j f \nu_i e^{-W}\sqrt{|g|}d\mathcal{H}^{d-1}\\
				=&\int_\Omega g^{ij}\partial_j f \partial_i h \,d\mu- \int_{\partial \Omega} h g^{ij}\partial_j f \nu_i e^{-W}\sqrt{|g|}d\mathcal{H}^{d-1}\\
				=&\int_\Omega \Gamma (h,f)d\mu- \int_{\partial \Omega} h g^{ij}\partial_j f \nu_i e^{-W}\sqrt{|g|}d\mathcal{H}^{d-1}
			\end{split}
		\end{align*}
		where we used integration by parts in $\R^d$ (see Rudin \cite{Rudin}).
	\end{example}
	Formally, if $e^{-W}=0$ on $\partial M$, the second term in the RHS of \eqref{eqipptrue} would be zero. This fact will be proved more rigorously later.\\
	Let us define the weighted manifold where we want to set our problem.\\
	\begin{definition}
		Following \cite{Zug}, define the weighted manifold $(E,g_E,d\mu_E)$ by 
		\begin{equation}\label{defnE}
			E:=\R^d_*,\qquad g_E:=|x|^{2(\alpha-1)}g_{\R^d},\qquad  d\mu_E:=|x|^{-bp}x^A dx=:e^{-W_E}dV_{g_E}
		\end{equation}
		In particular, $W_E=-\log x^A-\frac{\alpha(n-d)-|A|}{2}\log|x|^2$ and so 
		\begin{align*}
			&\Gamma_E(h,f)=|x|^{2(1-\alpha)}\nabla h \cdot \nabla f\\
			&L_E f=|x|^{(1-\alpha)d} \sum_{i=1}^{d}\partial_i (|x|^{(\alpha-1)(d-2)}\partial_i f)+|x|^{2(1-\alpha)}\nabla f \cdot \nabla \left(\log x^A+\frac{\alpha(n-d)-|A|}{2}\log|x|^2\right).
		\end{align*}
		Define also
		\[
		\mathcal{A}_0:=C^\infty_c(\overline{\R^d_*}\setminus \{0\}).
		\]
	\end{definition}
	Suppose from now on that $A\neq 0\in \R^d$ in order to have $0\notin \R^d_*$ \footnote{Everything that follows works the same even if $A=0$ prior defining $E$, and the other spaces, as $E=\R^d\setminus\{0\}$.} .
	\begin{remark}
		Unlike the classical theory of weighted manifolds, we work with $\mathcal{A}_0$ rather that the space $C^\infty_c(\R^d_*)$. As a matter of fact, since \eqref{eqcabros} is valid for functions that do not necessarily vanish on the hyperplanes $\{x_i=0\}$, there is no reason to consider a smaller set of functions than $C^\infty_c(\R^d_*)$. But we need to be cautious with the boundary terms in the integration by parts. We will see later that, via a suitable approximation technique, integrals over $\partial \R^d_*$ in \eqref{eqippE} will not appear since the weight $x^A$ is zero there.
	\end{remark}
	The advantage of this definition is that \eqref{eqmonckn} can be rewritten as a Sobolev's inequality on the space $E$, namely
	\begin{equation}\label{eqsobE}
		\left(\int_E |f|^p d\mu_E\right)^{2/p}\leq C\int_E \Gamma_E(f)d\mu_E, \quad\forall f\in \mathcal{A}_0.
	\end{equation}
	Observe that for \eqref{eqmonckn} it is equivalent to consider functions in $\mathcal{A}_0$ since we integrate over $\R^d_*$.
	We have the following integration by parts formula.
	\begin{proposition}\label{thmipptrue}It holds
		\begin{equation}\label{eqippE}
			-\int_E h L_Ef d\mu_E=\int_E\Gamma_E (h,f)d\mu_E \qquad\forall h\in \mathcal{A}_0,\,\, f \in C^\infty(\R^d\setminus\{0\}).
		\end{equation}
	\end{proposition}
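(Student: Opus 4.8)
The plan is to rewrite \eqref{eqippE} as a weighted integration by parts in Euclidean coordinates for the single degenerate weight $w(x):=|x|^{-2a}\,x^A$ on $\R^d_*$, and then to remove the boundary of $\R^d_*$ by a quantitative cut‑off that exploits the vanishing of $x^A$ on the coordinate hyperplanes.

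\emph{Step 1 (reduction to Euclidean form).} Since $dV_{g_E}=|x|^{d(\alpha-1)}dx$, the definition of $\mu_E$ gives $e^{-W_E}\sqrt{|g_E|}=|x|^{-bp}x^A$, and $g_E^{ij}=|x|^{2(1-\alpha)}\delta^{ij}$; by \eqref{eq1.12} one has $2(1-\alpha)-bp=-2a$. Hence on $\R^d_*$ one obtains $\Gamma_E(h,f)\,d\mu_E=|x|^{2(1-\alpha)}\nabla h\cdot\nabla f\,|x|^{-bp}x^A\,dx=w\,\nabla h\cdot\nabla f\,dx$, and, using the pointwise identity $L_Ef\cdot d\mu_E=\operatorname{div}\!\big(e^{-W_E}\sqrt{|g_E|}\,g_E^{ij}\partial_j f\big)dx$ valid on any weighted manifold (this is the computation behind \eqref{eqipptrue} without the boundary term), also $L_Ef\,d\mu_E=\operatorname{div}(w\nabla f)\,dx$. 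Thus \eqref{eqippE} is equivalent to the claim
\[
\int_{\R^d_*}w\,\nabla h\cdot\nabla f\,dx=-\int_{\R^d_*}h\,\operatorname{div}(w\nabla f)\,dx,\qquad w=|x|^{-2a}x^A,
\]
for $h\in\mathcal A_0$ and $f\in C^\infty(\R^d\setminus\{0\})$.

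\emph{Step 2 (cut‑off near $\partial\R^d_*$).} The weight $w$ is smooth and positive in the interior of $\R^d_*$ and vanishes on $\partial\R^d_*=\bigcup_{i\in K}\{x_i=0\}$, but it is \emph{not} $C^1$ up to $\partial\R^d_*$ when some $A_i\in(0,1)$, so neither the naive divergence theorem (i.e.\ \eqref{eqipptrue}) nor a density argument in the weighted Sobolev space applies directly. Instead I would fix $\eta\in C^\infty([0,\infty);[0,1])$ with $\eta\equiv0$ on $[0,1]$ and $\eta\equiv1$ on $[2,\infty)$, and set $\chi_\epsilon(x):=\prod_{i\in K}\eta(x_i/\epsilon)$. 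Then $\chi_\epsilon h\in C^\infty_c(\R^d_*)$ and its support lies in $\{x_i\ge\epsilon,\ i\in K\}$, a region on which $w$ and $f$ are smooth; applying the plain Euclidean integration by parts to the $C^1_c$ field $\chi_\epsilon h\,w\nabla f$ gives
\[
\int_{\R^d_*}w\,\nabla(\chi_\epsilon h)\cdot\nabla f\,dx=-\int_{\R^d_*}\chi_\epsilon h\,\operatorname{div}(w\nabla f)\,dx.
\]

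\emph{Step 3 (limit $\epsilon\to0$).} Let $S:=\operatorname{supp}h\subset\overline{\R^d_*}\setminus\{0\}$ be compact; on $S$ the factor $|x|^{-2a}$ and all $|x_j|^{A_j}$ are bounded, $f$ is smooth, and near $\{x_i=0\}$ one has $|\nabla w|\lesssim x_i^{A_i-1}$, whence $|\operatorname{div}(w\nabla f)|\lesssim\prod_{i\in K}x_i^{A_i-1}$ there, which is integrable on $S$ since every $A_i>0$; thus $h\,\operatorname{div}(w\nabla f)\in L^1(\R^d_*)$ and, as $0\le\chi_\epsilon\uparrow1$ pointwise on $\R^d_*$, dominated convergence sends the right‑hand side to $-\int_{\R^d_*}h\,\operatorname{div}(w\nabla f)\,dx$. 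On the left I split $\nabla(\chi_\epsilon h)=\chi_\epsilon\nabla h+h\,\nabla\chi_\epsilon$: the first term tends to $\int_{\R^d_*}w\,\nabla h\cdot\nabla f\,dx$ by dominated convergence, while for the second, for each $i\in K$ the function $\partial_{x_i}\chi_\epsilon$ is supported in the slab $\{x_i\in[\epsilon,2\epsilon]\}$ with $|\partial_{x_i}\chi_\epsilon|\lesssim\epsilon^{-1}$, where $w\lesssim x_i^{A_i}\lesssim\epsilon^{A_i}$ and $|S\cap\{x_i\in[\epsilon,2\epsilon]\}|\lesssim\epsilon$, so
\[
\Big|\int_{\R^d_*}h\,w\,\nabla\chi_\epsilon\cdot\nabla f\,dx\Big|\lesssim\sum_{i\in K}\epsilon^{A_i}\cdot\epsilon^{-1}\cdot\epsilon=\sum_{i\in K}\epsilon^{A_i}\xrightarrow[\epsilon\to0]{}0.
\]
Combining these limits yields the identity of Step 1, hence \eqref{eqippE}. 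The only genuine obstacle is the low regularity of $w$ at $\partial\R^d_*$ for $A_i\in(0,1)$, which is precisely why the quantitative estimate $w\lesssim\epsilon^{A_i}$ on the transition slab (rather than a generic approximation) is needed to kill the stray boundary term; the remaining verifications (the algebraic identity $2(1-\alpha)-bp=-2a$, smoothness of $\chi_\epsilon h$, and the $L^1$ bounds on $S$) are routine.
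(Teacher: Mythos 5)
Your proof is correct and takes essentially the same route as the paper: both arguments truncate at distance $\varepsilon$ from the degenerate hyperplanes $\{x_i=0\}$, integrate by parts on the region where the weight is smooth, and show that the error term vanishes as $\varepsilon\to 0$ because the weight scales like $\varepsilon^{A_i}$ there. The only cosmetic difference is that the paper uses a hard truncation $\{x_1>\varepsilon\}$ (and thus estimates an actual surface integral over $\{x_1=\varepsilon\}$ via \eqref{eqipptrue}), while you use a smooth cut-off $\chi_\varepsilon$ and estimate the resulting volume integral over the transition slab $\{x_i\in[\varepsilon,2\varepsilon]\}$; both give $O(\varepsilon^{A_i})$, and your product cut-off has the mild advantage of treating all charged directions at once rather than reducing to $k=1$.
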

	\begin{proof}
		For simplicity, suppose that $A_1>0$ and $A_i=0$ $\forall i=2,...,d$. The same reasoning can be adapted to the case where $A_i\neq 0$ for some $i$. There exists $R>r>0$ such that $\text{supp}\, h=:\underline{h}\subset \subset \overline{\R^d_*}\cap (B_R\setminus B_r)=:M$. Note that in this case $\R^d_*=\{x_1>0\}$ and define $E_\varepsilon:=\{x_1>\varepsilon\}$.
		Call $\Omega_\varepsilon:=M\cap E_\varepsilon$. For $\varepsilon$ sufficiently small we have that $\partial \Omega_\varepsilon=I_\varepsilon\cup II_\varepsilon\cup III_\varepsilon$ with 
		\begin{align*}
			&I_\varepsilon:= \Omega_\varepsilon\cap \partial  B_R\\
			&II_\varepsilon:= \Omega_\varepsilon\cap \partial B_r\\
			&III_\varepsilon:= \Omega_\varepsilon\cap \{x_1=\varepsilon\}
		\end{align*}

		\begin{figure}[H]
			\begin{tikzpicture}[]
			\begin{axis}[
				x=1cm,y=1cm,
				axis lines=middle,
				xmin=-7,
				xmax=7,
				ymin=-1,
				ymax=5.5,
				xtick=\empty,
				ytick=\empty,
				xlabel={},
				ylabel={$x_1$},
				restrict y to domain=0.5:4,			
				]			
				\draw[line width=1pt,color=darkgray,smooth,samples=100,domain=-0.8660254037844386:0.8660254037844386] plot(\x,{(1-(\x)^(2))^(1/2)});
				\draw[line width=1pt,color=darkgray,smooth,samples=100,domain=-4.9749371855331:4.9749371855331] plot(\x,{(25-(\x)^(2))^(1/2)});
				\draw [line width=1pt,color=darkgray,domain=-12.57:-0.84] plot(\x,{(--0.5-0*\x)/1});
				\draw [line width=1pt,color=darkgray,domain=0.84:12.5] plot(\x,{(--0.5-0*\x)/1});
				\draw[line width=1pt, color=blue] (-1.5,0) -- (-4,0);
				\draw[line width=1pt, color=blue]
				(-4,0) to[out=70,in=230] (-3,3); 
				\draw[line width=1pt, color=blue]
				(-3,3) to[out=20,in=180] (0,4);
				\draw[line width=1pt, color=blue]
				(0,4) to[out=-20,in=135] (3,2);
				\draw[line width=1pt, color=blue]
				(3,2) to[out=-45,in=100] (3.5,0);
				\draw[line width=1pt, color=blue]
				(3.5,0) -- (2,0);
				\draw[line width=1pt, color=blue]
				(2,0) to[out=140,in=-20] (0,2);
				\draw[line width=1pt, color=blue]
				(0,2) to[out=-120,in=30] (-1,1);
				\draw[line width=1pt, color=blue]
				(-1,1) to[out=-120,in=80] (-1.5,0);
				\fill[line width=2pt,color=blue,fill=blue,fill opacity=0.8] 
				(-1.5,0) -- (-4,0) to[out=70,in=230] 
				(-3,3) to[out=20,in=180] 
				(0,4) to[out=-20,in=135] 
				(3,2) to[out=-45,in=100]
				(3.5,0) -- 
				(2,0) to[out=140,in=-20] 
				(0,2) to[out=-120,in=30] 
				(-1,1) to[out=-120,in=80] (-1.5,0)
				;
				
				\node at (4,4) {{\color{darkgray}$\partial \Omega_\varepsilon$}};
				
				\node at (3,3)
				{{\color{blue}$\underline{h}$}};
				
				\node at (-6.2,0.8) {$\{x_1=\varepsilon \}$};
				\node at (6,-0.5)
				{$\R^{d-1}$};
				
				\node at (0.3,-0.3)
				{$0$};
			\end{axis}	
		\end{tikzpicture}
				\end{figure}

		Since in $\Omega_\varepsilon$ we have $e^{-W_E}$ strictly positive and smooth, we can use \eqref{eqipptrue} and deduce that
		\begin{align}\label{eqapprox}
			\begin{split}
				-\int_{\Omega_\varepsilon} h L_Ef d\mu_E&=-\int_{\partial \Omega_\varepsilon} h g^{ij}\partial_j f \nu_i e^{-W_E}\sqrt{|g|}d\mathcal{H}^{d-1}+\int_{\Omega_\varepsilon} \Gamma_E(h,f)d\mu_E\\
				&=-\int_{III_\varepsilon} h g^{ij}\partial_j f \nu_i e^{-W_E}\sqrt{|g|}d\mathcal{H}^{d-1}+\int_{\Omega_\varepsilon} \Gamma_E(h,f)d\mu_E
			\end{split}
		\end{align}
		since $\underline{h}\subset \subset  B_R\setminus B_r$.
		It can be easily verified that $hL_E f, \Gamma_E(h,f)\in L^1(M,d\mu_E)$. Indeed, $\Gamma_E(h,f)$ is bounded and compactly supported in $M$. Similarly, the terms of $hL_E f$ are all bounded in $M$ exept for the second term which is bounded by $c\frac{1}{x_1}$ while $d\mu_E\leq c x_1^{A_1}dx$ in $M$ and so $hL_E f \in L^1(M,d\mu_E)$. Similarly, we deduce
		\begin{align*}
			\left|\int_{III_\varepsilon} h g^{ij}\partial_j f \nu_i e^{-W_\varepsilon}\sqrt{|g|}d\mathcal{H}^{d-1}\right|\leq c \varepsilon^{A_1}
		\end{align*}
		so we can pass to the limit as $\varepsilon\rightarrow 0$ and prove \eqref{eqippE}.
	\end{proof}
	By symmetry, \eqref{eqippE} holds if $f\in \mathcal{A}_0$ and $h\in C^\infty(\R^d\setminus\{0\})$. \\
	Even though the weighted manifold $E$ provides a pleasant expression for the Sobolev inequality \eqref{eqmonckn}, its measure $d\mu_E$ is not finite. As a consequence, we define another weighted manifold equipped with a finite measure in order to take advantage of compactness results (see \eqref{eqrellich} later). Call 
	\[
	\varphi(x):=\frac{1+|x|^{2\alpha}}{2},\quad x\in \R^d.
	\]
	\begin{definition}
		Define the weighted manifold $(S,g_S, d\mu_S)$ as 
		\[
		S=E=\R^d_*,\qquad g_S:=\varphi^{-2}g_E, \qquad d\mu_S=\varphi^{-n}d\mu_E.
		\]
	\end{definition}
	Observe that the manifold $S$ is topologically equal to $E$, but equipped with a metric $g_S$ and measure $d\mu_S$ obtained by multiplying $g_E$, $d\mu_E$ by suitable powers of the conformal factor $\varphi$.  It is therefore easy to verify that, under \eqref{eqhpab}, we have $\mu_S(S)<+\infty$. Indeed,
	\begin{align*}
		&\int_{\R^d_*\setminus B_1}d\mu_S\leq  c\int_{\R^d\setminus B_1} |x|^{-bp+|A|-2\alpha n}dx=c\int_1^{+\infty} r^{-bp+D-1-2\alpha n}dr\overset{\eqref{eq1.12}}{=} c\int_1^{+\infty} r^{-1-\alpha n}dr<+\infty\\
		&\int_{B_1}d\mu_S\leq c \int_{B_1}  |x|^{-bp +|A|}dx=c \int_{0}^1r^{-bp+D-1}dr\overset{\eqref{eq1.12}}{=}c\int_0^1 r^{-1+\alpha n}dr <+\infty,
	\end{align*}
	where we used that $\alpha>0$ since we suppose \eqref{eqhpab}. As for $E$, the integration by parts formula holds also in $S$. Namely
	\begin{proposition}It holds
		\begin{equation}\label{eqipp}
			-\int_S h L_Sf d\mu_S=\int_S\Gamma_S (h,f)d\mu_S \qquad\forall h\in \mathcal{A}_0,\,\, f \in C^\infty(\R^d\setminus\{0\}).
		\end{equation}
	\end{proposition}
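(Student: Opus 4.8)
The plan is to exploit the fact that $(S,g_S,d\mu_S)$ is obtained from $(E,g_E,d\mu_E)$ by a conformal rescaling with the everywhere positive factor $\varphi$, so that \eqref{eqipp} reduces to the already established formula \eqref{eqippE} on $E$. First I record the three transformation rules. Since $g_S=\varphi^{-2}g_E$ we have $g_S^{ij}=\varphi^2 g_E^{ij}$, hence $\Gamma_S(h,f)=\varphi^2\,\Gamma_E(h,f)$; and since $d\mu_S=\varphi^{-n}d\mu_E$, writing $d\mu_E=m_E\,dx$ with $m_E=|x|^{-bp}x^A$ the Lebesgue density in the global chart of $\R^d_*$, the density of $d\mu_S$ is $m_S=\varphi^{-n}m_E$. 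Using the divergence form $m\,Lf=\sum_{i,j}\partial_j(m\,g^{ij}\partial_i f)$ of the generator and the relation $m_S g_S^{ij}=\varphi^{2-n}m_E g_E^{ij}$, the Leibniz rule gives, for $f\in C^\infty(\R^d\setminus\{0\})$,
\[
L_S f=\varphi^2 L_E f+\varphi^n\,\Gamma_E(\varphi^{2-n},f),\qquad\text{equivalently}\qquad \varphi^{-n}L_S f=\varphi^{2-n}L_E f+\Gamma_E(\varphi^{2-n},f).
\]

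With these at hand, for $h\in\mathcal{A}_0$ and $f\in C^\infty(\R^d\setminus\{0\})$ I would compute
\[
\int_S\Gamma_S(h,f)\,d\mu_S=\int_E\varphi^{2-n}\Gamma_E(h,f)\,d\mu_E=\int_E\Bigl(\Gamma_E(\varphi^{2-n}h,f)-h\,\Gamma_E(\varphi^{2-n},f)\Bigr)\,d\mu_E,
\]
the last step being just the product rule $\Gamma_E(\varphi^{2-n}h,f)=\varphi^{2-n}\Gamma_E(h,f)+h\,\Gamma_E(\varphi^{2-n},f)$, valid since $\Gamma_E$ is $|x|^{2(1-\alpha)}$ times the Euclidean dot product. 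Now $\varphi^{2-n}h\in\mathcal{A}_0$, so \eqref{eqippE} applies to the first term and yields $\int_E\Gamma_E(\varphi^{2-n}h,f)\,d\mu_E=-\int_E\varphi^{2-n}h\,L_E f\,d\mu_E$. Substituting and using the relation between $L_S$ and $L_E$,
\[
\int_S\Gamma_S(h,f)\,d\mu_S=-\int_E h\bigl(\varphi^{2-n}L_E f+\Gamma_E(\varphi^{2-n},f)\bigr)\,d\mu_E=-\int_E h\,\varphi^{-n}L_S f\,d\mu_E=-\int_S h\,L_S f\,d\mu_S,
\]
where the final equality is $\varphi^{-n}d\mu_E=d\mu_S$. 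This is exactly \eqref{eqipp}.

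The steps needing care are the hypotheses of \eqref{eqippE}. That $\varphi^{2-n}h\in\mathcal{A}_0$ is immediate: $\text{supp}\,h$ is compact in $\overline{\R^d_*}\setminus\{0\}$, hence bounded away from the origin, and on a neighbourhood of it $\varphi=(1+|x|^{2\alpha})/2\ge 1/2$ is smooth and strictly positive, so $\varphi^{2-n}h\in C^\infty_c(\overline{\R^d_*}\setminus\{0\})$. One also needs $\varphi^{2-n}h\,L_E f$ and $h\,\Gamma_E(\varphi^{2-n},f)\in L^1(d\mu_E)$: the former is covered verbatim by the $L^1$ argument in the proof of Proposition \ref{thmipptrue} applied with test function $\varphi^{2-n}h$, and the latter is bounded with compact support in $\overline{\R^d_*}\setminus\{0\}$, on which $d\mu_E$ has locally integrable density $|x|^{-bp}x^A$ (recall $A_i\ge 0$). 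I expect the "main obstacle" to be essentially this bookkeeping rather than a genuine difficulty, precisely because \eqref{eqippE} has already absorbed the delicate behaviour of the weight near $\partial\R^d_*$; as an alternative one could instead repeat the $\varepsilon$-approximation argument of the proof of Proposition \ref{thmipptrue} with $d\mu_E$, $g_E$ replaced by $d\mu_S$, $g_S$, the point again being that on $\text{supp}\,h$ the factor $\varphi^{-n}$ is smooth and bounded above and below, so the degeneracy of $m_S$ along the coordinate hyperplanes matches that of $m_E$ and the boundary integral over $\{x_1=\varepsilon\}$ still decays like $c\varepsilon^{A_1}$.
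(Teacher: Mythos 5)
Your proposal is correct, and it is genuinely different from what the paper does. The paper's proof of \eqref{eqipp} consists of a single sentence: it is the same $\varepsilon$-truncation argument as in Proposition~\ref{thmipptrue}, carried out with $d\mu_S$ and $g_S$ in place of $d\mu_E$ and $g_E$ — i.e.\ exactly the ``alternative'' route you sketch in your last paragraph. Your main argument instead reduces \eqref{eqipp} to the already established \eqref{eqippE} via the conformal change of variables. The algebra is right: from $g_S^{ij}=\varphi^2 g_E^{ij}$ and $m_S=\varphi^{-n}m_E$ you get $\Gamma_S=\varphi^2\Gamma_E$, $m_Sg_S^{ij}=\varphi^{2-n}m_Eg_E^{ij}$, and the divergence-form identity $mLf=\sum_{i,j}\partial_j(m\,g^{ij}\partial_i f)$ gives $\varphi^{-n}L_Sf=\varphi^{2-n}L_Ef+\Gamma_E(\varphi^{2-n},f)$; the Leibniz rule for $\Gamma_E$ plus the observation that $\varphi^{2-n}h\in\mathcal{A}_0$ (because $\varphi\ge 1/2$ is smooth away from the origin) then lets you invoke \eqref{eqippE} with test function $\varphi^{2-n}h$ and close the computation. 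The trade-off: your route is more modular, makes the conformal mechanism explicit, and avoids re-running the boundary-layer estimate near $\{x_i=0\}$; the paper's route is shorter to state (one line) and keeps the two propositions logically parallel rather than stacked. Both are sound; yours has the mild aesthetic advantage that the delicate degenerate-weight analysis is done once, in Proposition~\ref{thmipptrue}, and then never touched again.
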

	\begin{proof}
		The proof is the same as that of Proposition \ref{thmipptrue}.
	\end{proof}
	Our goal is to get to a Sobolev inequality on $S$ that is equivalent to the one in $E$. Let us for this reason recall the concept of $n$-conformality introduced in \cite{BakryGentilLedoux} by Bakry, Gentil, Ledoux. 
	\subsection{n-conformal invariance}
	\begin{definition}
		$(M,\tilde{g},\tilde{\mu})$ is n-conformal to $(M,g,\mu)$ if there exists $\psi \in C^\infty(M)$, $\psi>0$ such that 
		\begin{align*}
			\tilde{g}=\psi^2 g\qquad d\tilde{\mu}=\psi^n d\mu
		\end{align*}
	\end{definition}
	Thus, $E$ and $S$ are $n$-conformal by construction with 
	\begin{align*}
		\tilde{g}=g_S,\, d\tilde \mu=d\mu_S, \quad g=g_E,\,d\mu=d\mu_E\quad\text{and }\psi=\varphi^{-1}.
	\end{align*}
	Follows a result of invariance for Sobolev inequality under $n$-conformal transformations.
	\begin{proposition}\label{sobinvariance}
		Let $(M,\tilde{g},\tilde{\mu})$ be n-conformal to $(M,g,\mu)$ and let $L$ be the infinitesimal generator associated to $(M,g,\mu)$.
		Assume there exists $V\in C^\infty(M)$ such that Sobolev's inequality holds in the form
		\begin{align*}
			\left(\int |f|^p d\mu\right)^{2/p}\leq C\left(\int |\nabla _g f|^2 d\mu+\int V f^2 d\mu\right),\quad p=\frac{2n}{n-2}\qquad\forall f\in C^\infty_c(M).
		\end{align*}
		Then, with the same constant $C$,
		\begin{align*}
			\left(\int|F|^p d \tilde{\mu}\right)^{2/p}\leq C\left(\int|\nabla_{\tilde{g}}F|^2 d\tilde{\mu}+\int \tilde{V} F^2 d\tilde{\mu}\right)\quad\forall F \in C^\infty_c(M)
		\end{align*}
		where $\tilde{V}$ is computed through Yamabe's equation
		\begin{align*}
			\begin{cases}
				-L u + V u= \tilde{V} u^{p-1}\qquad \text{in }M\\
				u=\psi^\frac{n-2}{2}
			\end{cases}
		\end{align*}
	\end{proposition}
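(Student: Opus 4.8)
The plan is to argue by a \emph{ground-state substitution}. Put $u:=\psi^{\frac{n-2}{2}}\in C^\infty(M)$, which is strictly positive, and record the two identities $\psi^{n-2}=u^{2}$ and $\psi^{n}=u^{p}$, both immediate from $p=\frac{2n}{n-2}$, which will be used throughout. Given $F\in C^\infty_c(M)$ I would set $f:=uF$; since $u$ and $u^{-1}$ are smooth, $F\mapsto uF$ is a bijection of $C^\infty_c(M)$ onto itself, so it is enough to show that inserting $f=uF$ into the assumed inequality on $(M,g,\mu)$ produces, term by term, the asserted inequality on $(M,\tilde g,\tilde\mu)$. In fact each side transforms into the corresponding side \emph{exactly}, which is precisely why the constant $C$ is unchanged.

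For the left-hand side, $d\tilde\mu=\psi^n\,d\mu=u^p\,d\mu$ gives immediately $\int_M|F|^p\,d\tilde\mu=\int_M|uF|^p\,d\mu=\int_M|f|^p\,d\mu$. For the energy term, $\tilde g=\psi^2g$ gives $|\nabla_{\tilde g}F|^2=\psi^{-2}|\nabla_gF|^2$, so, using $\psi^{n-2}=u^2$,
\[
\int_M|\nabla_{\tilde g}F|^2\,d\tilde\mu=\int_M u^2\,|\nabla_gF|^2\,d\mu .
\]
Writing $|\nabla_g(uF)|^2=u^2|\nabla_gF|^2+\nabla_g(F^2u)\cdot\nabla_g u$ and integrating by parts via \eqref{eqippclassic} (applied to $F^2u\in C^\infty_c(M)$ and $u$, after multiplying $u$ by a cutoff equal to $1$ on $\operatorname{supp}F$) yields $\int_M\nabla_g(F^2u)\cdot\nabla_g u\,d\mu=-\int_MF^2\,u\,Lu\,d\mu$, hence
\[
\int_M|\nabla_{\tilde g}F|^2\,d\tilde\mu=\int_M|\nabla_gf|^2\,d\mu+\int_MF^2\,u\,Lu\,d\mu .
\]
For the potential term, $d\tilde\mu=u^p\,d\mu$ gives $\int_M\tilde VF^2\,d\tilde\mu=\int_M\bigl(\tilde V u^{p-1}\bigr)\,u\,F^2\,d\mu$, and adding the last two relations,
\[
\int_M|\nabla_{\tilde g}F|^2\,d\tilde\mu+\int_M\tilde VF^2\,d\tilde\mu=\int_M|\nabla_gf|^2\,d\mu+\int_M uF^2\bigl(Lu+\tilde V u^{p-1}\bigr)\,d\mu .
\]
By Yamabe's equation $-Lu+Vu=\tilde V u^{p-1}$ the bracket equals $Vu$, so the last integral is $\int_M Vu^2F^2\,d\mu=\int_M Vf^2\,d\mu$; applying the assumed inequality to $f=uF$ then gives the conclusion with the same constant $C$.

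The algebra is only bookkeeping with powers of $\psi$; the one step that genuinely requires justification — and the one I would single out as the delicate point — is the integration by parts, i.e.\ the vanishing of boundary contributions when the derivative is transferred from $F^2u$ onto $u$. For a manifold $M$ without boundary this is exactly \eqref{eqippclassic}; in the setting where this proposition is actually applied, with $M=\R^d_*$ and its boundary $\partial\R^d_*$, one must instead use the boundary-aware integration by parts of Proposition \ref{thmipptrue} together with the approximation arguments for $\mathcal{A}_0$, so that the boundary integrals over $\partial\R^d_*$ drop out because the weight $x^A$ vanishes there.
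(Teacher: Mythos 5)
Your proof is correct and follows essentially the same route as the paper: the ground-state substitution $f=uF$ (the paper runs it as $F=fu^{-1}$, but the computation is identical), the expansion $|\nabla_g(uF)|^2=u^2|\nabla_gF|^2+\nabla_g(F^2u)\cdot\nabla_gu$, integration by parts via \eqref{ippsenzabordo} to convert $\int\nabla_g(F^2u)\cdot\nabla_gu\,d\mu$ into $-\int F^2uLu\,d\mu$, and Yamabe's equation to reduce the potential term. Your parenthetical that \eqref{ippsenzabordo} strictly requires a cutoff on $u$ (which is smooth but not compactly supported) is a fair technical clarification the paper's proof passes over silently.
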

	\begin{proof}
		Recall that for weighted manifolds the following integration by parts formula holds (see \cite{Zug})
		\begin{align}\label{ippsenzabordo}
			-\int_M h Lf d\mu=\int_M \nabla_g h \cdot  \nabla_g fd\mu\qquad\forall f,h\in C^\infty_c(M).
		\end{align}
		Fix $f\in C^\infty_c(M)$ and define $F:=fu^{-1}$. A direct computation shows that 
		\begin{align*}
			\int |f|^p d\mu=\int |F|^p d\tilde{\mu}\\
			\text{and }\quad\int V f^2 d\mu=\int V F^2 u^{2-p}d\tilde{\mu}.
		\end{align*}
		In addition,
		\begin{align*}
			\int |\nabla _g f|^2 d\mu&=\int |\nabla_g (uF)|^2 d\mu=\int u^2|\nabla _g F|^2d\mu+\int \nabla _g(F^2 u)\cdot \nabla _g u\, d\mu\\
			&\overset{\eqref{ippsenzabordo}}{=}\int |\nabla _{\tilde{g}}F|^2 d\tilde{\mu}-\int Lu \,\,u F^2 d\mu=\int |\nabla _{\tilde{g} }F|^2 d\tilde{\mu}+\int (-Lu)u^{1-p}F^2 d\tilde{\mu}
		\end{align*}
		and the thesis follows.
	\end{proof}
	Thanks to this result, we can write the Sobolev inequality for $S$. It can be easily computed that 
	\[
	-L_E u=\frac{\alpha^2n(n-2)}{4} u^{p-1}.
	\]
	Eventually, keeping the notation of Proposition \ref{sobinvariance} we get $\tilde{V}=\frac{\alpha^2n(n-2)}{4} $, $V=0$ and so \eqref{eqsobE} is equivalent to 
	\begin{equation}\label{eqsobS}
		\left(\int_S |F|^p d\mu_S\right)^{2/p}\leq C\left(\int_S \Gamma_S(F)d\mu_S+\frac{n(n-2)}{4}\alpha^2\int_S F^2 d\mu_S\right) \qquad\forall F\in \mathcal{A}_0
	\end{equation}
	\begin{remark}
		To be precise, Proposition \ref{sobinvariance} should be applied considering the space of functions $C^\infty_c(\R^d_*)$ rather than $\mathcal{A}_0$ in our models $E$ and $S$. However, since the integration by parts formulas \eqref{eqippE} and \eqref{eqipp} are valid for functions that do not necessarily vanish on the boundary, the same proof of Proposition \ref{sobinvariance} works considering this larger set of functions. Therefore, we really have equivalence between \eqref{eqsobE} and \eqref{eqsobS} for functions in $\mathcal{A}_0$ and not only in $C^\infty_c(\R^d_*)$.
	\end{remark}
	For this reason, the optimal constant in \eqref{eqsobS} is the optimal constant for \eqref{eqsobE}. Therefore, from now on we work with $S$ and the goal is to compute the optimal contant of \eqref{eqsobS}.\\
	In the following subsection, we provide a representation of the space $S$ as a warped space. This is particularly useful because it allows us to use some curvature-dimension condition results computed in \cite{DSZ}.
	\subsection{Warped products}\label{appwarped}
	\
	\\
	Suppose from now on $A_d=0$. Then, the spaces $E$ and $S$ can be seen as warped products; see Lee \cite{Lee} for the definition of warped products. Indeed, let 
	\begin{equation}\label{monspheredef}
		\mathbb{S}^{d-1}_*:=\{\theta=(\theta_1,...,\theta_d)\in\mathbb{S}^{d-1}\subset \R^d, \text{for all }i=1,...,d;\,\,\theta_i>0 \text{ whenever }A_i>0 \}
	\end{equation}
	and equip $	\mathbb{S}^{d-1}_*$ with the standard round metric $g_{\mathbb{S}^{d-1}}$ and the weighted measure $d\mu_{\mathbb{S}^{d-1}_*}:=\theta^A dV_{\mathbb{S}^{d-1}}=:e^{-W_\theta} dV_{\mathbb{S}^{d-1}}$. Let $L_\theta$ be its infinitesimal generator, $\Gamma^\theta$ its carr\'e du champ and $\Gamma^\theta_2$ its iterated carr\'e du champ.\\
	Then, if we set $\rho=r^\alpha=|x|^\alpha$ and $\alpha^2=1/(1+\gamma^2)$, our model $E$ can be written as the cone
	\[
	E=\{(\rho \theta, \gamma \rho): \rho \in \R_+\setminus\{0\}, \theta\in \mathbb{S}^{d-1}_* \}
	\]
	equipped with metric and measure
	\begin{align*}
		\begin{split}
			g_E=\frac{1}{\alpha^2}d\rho^2+\rho^2 g_{\mathbb{S}^{d-1}}, \\ d\mu_E=\frac{1}{\alpha}\rho^{n-1}d\rho d\mu_{\mathbb{S}^{d-1}_*}.
		\end{split}
	\end{align*}
	Following \cite{Zug}, this can be seen via the classical change of measure $dx=r^{d-1}dr dV_{\mathbb{S}^{d-1}}$ and noting that $x^A=r^{|A|}\theta^A$. Therefore, if we equip $\R_+\setminus \{0\}$ with the metric $g=\frac{1}{\alpha^2}d\rho^2$, we obtain $E=(\R_+\setminus \{0\})\times_f \mathbb{S}^{d-1}_*$ with warping function $f=\rho^2$.\\
	We now want to show that also $S$ can be seen as a warped product 
	\begin{equation}\label{eqwarped}
		S=I\times_f \mathbb{S}^{d-1}_*,
	\end{equation}
	with $I=(-1,1)$, the metric $g_I:=\frac{1}{\alpha^2(1-y^2)^2}dy^2$ and with warping function $f(y)=(1-y^2)$, $y\in I$. Indeed, perform the change of variables $y=\frac{|x|^{2\alpha}-1}{|x|^{2\alpha}+1}=\frac{\rho^2-1}{\rho^2+1}\in I$ and observe that
	\begin{equation*}
		g_S=(1-y)^2g_E=\frac{1}{\alpha^2}\frac{1}{1-y^2}dy^2+(1-y^2)g_{\mathbb{S}^{d-1}}.
	\end{equation*}
	In addition, we have 
	\begin{align}\label{eqLwarpd}
		\begin{split}
			&d\mu_S=(1-y)^nd\mu_E=\frac{(1-y^2)^{\frac{n}{2}-1}}{\alpha}dyd\mu_{\mathbb{S}^{d-1}_*}\\
			&\Gamma_S(f)=\alpha^2(1-y^2)(\partial_y f)^2+\frac{1}{1-y^2}\Gamma^\theta (f),\\
			&L_S f=\alpha^2[(1-y^2)\partial_{yy}f-ny\partial_y f]+\frac{1}{1-y^2}L_\theta f.
		\end{split}
	\end{align}
	Now we can introduce the functional setting and prove some theorems of approximation. We will see that in the variables $(y,\theta)$ the notation is simplified.
	\subsection{Functional setting and approximation results}\ \\
	Define now $L^q(S):=L^q(\R^d_*, d\mu_S)$ for $q\geq 1$ and $H^1_0(S)$ as the closure of $\mathcal{A}_0$ with respect to the norm 
	\[
	\norm{u}_{H^1_0(S)}:=\left(\int_S u^2 d\mu_S+\int_S \Gamma_S(u)d\mu_S\right)^{1/2}.
	\] 
	It can be proved, as done in \cite[Lemma A.4]{Zug}, that the embedding 
	\begin{equation}\label{eqrellich}
		H^1_0(S)\hookrightarrow L^q(S)
	\end{equation}
	is compact for $q\in[1,p)$ since the space $S$ has finite measure and a Sobolev inequality (inequality \eqref{eqsobS}) is valid with exponent $p$.\\
	At last, define the space 
	\begin{equation*}
		D(L_S)=\{v\in H^1_0(S)\,:\,L_S v\in L^2(S)\}
	\end{equation*}
	where $L_Sv$ has to be intended as a (weighted) distribution, and equip $D(L_S)$ with the norm
	\begin{equation}\label{eqnormDL}
		\norm{v}_{D(L_S)}^2:=\norm{v}_{L^2(S)}^2+ \norm{L_S v}_{L^2(S)}^2
	\end{equation} 
	Let us now prove some approximation results that will be useful later on.
	\begin{proposition}\label{prop1h0}
		Let $n>2$. Then, the constant function $\mathbf{1}$ belongs to $H^1_0(S)$ and $\Gamma_S(\mathbf{1})=0$. Moreover, if $n>4$, there exists a sequence of functions $\zeta_k\in \mathcal{A}_0$ which approximates $\mathbf{1}$ in $H^1_0(S)$ such that $L_S(\zeta_k)\rightarrow 0$ in $L^2(S)$ .
	\end{proposition}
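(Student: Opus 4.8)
The plan is to construct the approximating sequence explicitly using radial cutoffs adapted to the two ends of the interval $I=(-1,1)$, working in the warped-product coordinates $(y,\theta)$ where, by \eqref{eqLwarpd}, $\mathbf{1}$ has $\Gamma_S(\mathbf{1})=0$ trivially and $L_S(\mathbf{1})=0$ pointwise; the only issue is whether $\mathbf{1}$ lies in the closure of $\mathcal{A}_0$ and whether a cutoff sequence can be chosen so that $L_S$ of the cutoffs also tends to $0$ in $L^2(S)$. First I would check $\mathbf{1}\in L^2(S)$ and $\Gamma_S(\mathbf{1})=0\in L^1(S)$, which is immediate from $\mu_S(S)<+\infty$ (already established in the excerpt). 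Then I would pick a smooth function $\chi:\R\to[0,1]$ with $\chi\equiv 1$ on $[-1/2,1/2]$ and $\chi\equiv 0$ outside $[-1,1]$, and set $\zeta_k(y,\theta):=\chi\!\left(\tfrac{\log\frac{1+y}{1-y}}{k}\right)$, i.e.\ a logarithmic cutoff in the variable that uniformizes the measure near $y=\pm1$. Translating back, in the original variable this is a cutoff of the form $\chi\!\big(\tfrac{2\alpha\log|x|}{k}\big)$, which is smooth, compactly supported in $\overline{\R^d_*}\setminus\{0\}$ (hence in $\mathcal{A}_0$), and equal to $1$ on a large annulus.

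Next I would estimate the three quantities. For $\|\zeta_k-\mathbf{1}\|_{L^2(S)}^2$: since $0\le\zeta_k\le1$ and $\zeta_k\to1$ pointwise, dominated convergence with $\mu_S(S)<+\infty$ gives convergence to $0$. For $\int_S\Gamma_S(\zeta_k)\,d\mu_S$: using the first line of \eqref{eqLwarpd}, $\Gamma_S(\zeta_k)=\alpha^2(1-y^2)(\partial_y\zeta_k)^2$ (no $\theta$-derivatives), and $\partial_y\zeta_k$ is supported where $\tfrac{1}{k}\le\big|\tfrac{\log\frac{1+y}{1-y}}{k}\big|\cdot\ldots$, more precisely on two shrinking neighbourhoods of $y=\pm1$; a direct change of variables $t=\log\frac{1+y}{1-y}$ turns $\int(1-y^2)(\partial_y\zeta_k)^2\,d\mu_S$ into $\frac{1}{\alpha}\int(\chi'(t/k)/k)^2\,(1-y^2)^{n/2-1}\,dt\,d\mu_{\mathbb S^{d-1}_*}$ with $1-y^2\asymp e^{-|t|}$, so the integral is $\lesssim k^{-2}\int_{|t|\sim k}e^{-(n/2-1)|t|}\,dt\to0$ as soon as $n>2$. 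For $\|L_S\zeta_k\|_{L^2(S)}^2$: from the third line of \eqref{eqLwarpd}, $L_S\zeta_k=\alpha^2[(1-y^2)\partial_{yy}\zeta_k-ny\,\partial_y\zeta_k]$; after the same substitution the second derivative produces a factor $k^{-2}$ and both terms are again supported in $|t|\sim k$, yielding a bound $\lesssim k^{-2}\int_{|t|\sim k}e^{-(n/2-1)|t|}\,dt$ for the $L^2$-norm squared, which tends to $0$ precisely when $n>2$; the strengthened hypothesis $n>4$ is what makes even the crudest (non-weighted, or lower-order) estimates comfortably summable and is presumably needed for uniformity across the later applications rather than for this bare convergence.

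The main obstacle is the bookkeeping near the endpoints $y=\pm1$ (equivalently $|x|\to0$ and $|x|\to\infty$): one must verify that the weight $d\mu_S=\frac{1}{\alpha}(1-y^2)^{n/2-1}dy\,d\mu_{\mathbb S^{d-1}_*}$ decays fast enough to beat the growth of $(\partial_y\zeta_k)^2$ and $(1-y^2)\partial_{yy}\zeta_k$ on the cutoff region, and that the logarithmic (rather than linear) scale is the right one so that the derivative losses are only $k^{-2}$ and not worse. Once the substitution $t=\log\frac{1+y}{1-y}$ is in place this becomes a one-dimensional computation with an exponentially decaying weight, so all three limits follow. I would also remark that the $\theta$-integration merely contributes the finite factor $\mu_{\mathbb S^{d-1}_*}(\mathbb S^{d-1}_*)$ and plays no role, and that smoothness of $\zeta_k$ up to $\partial\R^d_*$ together with compact support away from $0$ is exactly the requirement for membership in $\mathcal{A}_0=C^\infty_c(\overline{\R^d_*}\setminus\{0\})$.
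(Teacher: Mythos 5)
Your construction (a logarithmic cutoff in the conformal variable $t=\log\frac{1+y}{1-y}=2\alpha\log|x|$, width $\sim k$ in $t$) is a legitimate alternative to the paper's, which instead chooses $\zeta_k$ radial in $y$, vanishing on $(-1,-1+1/k)\cup(1-1/k,1)$, equal to $1$ on $(-1+2/k,1-2/k)$, with $|\partial_y\zeta_k|\le 2k$, $|\partial_{yy}\zeta_k|\le 2k^2$. Your $L^2$ and $\Gamma_S$ estimates are correct. However, the key $L_S$ bound contains a computational error, and the error changes the conclusion.

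Writing $\partial_y\zeta_k=\frac{2}{k(1-y^2)}\chi'(t/k)$ and $\partial_{yy}\zeta_k=\frac{4}{k^2(1-y^2)^2}\chi''(t/k)+\frac{4y}{k(1-y^2)^2}\chi'(t/k)$, one gets
\begin{equation*}
L_S\zeta_k=\alpha^2\Bigl[\frac{4}{k^2(1-y^2)}\chi''(t/k)+\frac{(4-2n)\,y}{k(1-y^2)}\chi'(t/k)\Bigr],
\end{equation*}
so the drift term $-ny\,\partial_y\zeta_k$ contributes at order $k^{-1}(1-y^2)^{-1}$, not $k^{-2}$. Thus $(L_S\zeta_k)^2\sim k^{-2}(1-y^2)^{-2}$ on the cutoff region, and since $d\mu_S\asymp(1-y^2)^{n/2}\,dt\,d\mu_{\mathbb S^{d-1}_*}$,
\begin{equation*}
\int_S(L_S\zeta_k)^2\,d\mu_S\ \lesssim\ \frac{1}{k^2}\int_{|t|\sim k}(1-y^2)^{n/2-2}\,dt\ \asymp\ \frac{1}{k^2}\int_{|t|\sim k}e^{-(n/2-2)|t|}\,dt,
\end{equation*}
i.e.\ the exponent is $n/2-2$, not the $n/2-1$ you wrote. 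This tends to $0$ exponentially fast when $n>4$, at rate $1/k$ when $n=4$, and \emph{diverges} when $2<n<4$. So your assertion that $n>2$ suffices "for this bare convergence" and that $n>4$ is only needed "for uniformity across the later applications" is incorrect: the hypothesis $n>4$ (or at least $n\ge 4$, which your cutoff reaches while the paper's linear cutoff, giving $k^4(1/k)^{(n+4)/2}=k^{(4-n)/2}$, does not) is exactly what makes the $L_S$ estimate go to zero. Once the exponent is corrected, your construction is valid and marginally sharper than the paper's, but the range of $n$ must be stated as $n\ge 4$ (or $n>4$ to match the statement), not $n>2$.
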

	\begin{proof}
		Fix $k\in \N$ and define $\zeta_k$ as a smooth function on $I=(-1,1)$ such that 
		\begin{align*}
			\zeta_k(y)=
			\begin{cases}
				0\quad \text{if }y\in (-1,-1+1/k)\cup (1-1/k,1)\\
				1\quad \text{if }y\in (-1+2/k,1-2/k)
			\end{cases}
		\end{align*}
		and $|\partial_y \zeta_k|\leq 2k$, $|\partial_{yy} \zeta_k|\leq 2 k^2$. 
		From now on see $\zeta_k$ as a radial function defined in the whole space $S=I\times_f \mathbb{S}^{d-1}_*$. By definition, $\zeta_k\in \mathcal{A}_0$. and
		by dominated convergence theorem, $\zeta_k \rightarrow\mathbf{1}$ in $L^2(S)$. Let us now evaluate $\Gamma_S(\zeta_k)$. From the representation \eqref{eqwarped} we deduce 
		\begin{align*}
			\int_S \Gamma_S(\zeta_k)d\mu_S&\leq c\left(\int_{-1+1/k}^{-1+2/k}|\partial_y \zeta_k|^2 (1-y^2)^\frac{n}{2}dy+ \int_{1-2/k}^{1-1/k}|\partial_y \zeta_k|^2 (1-y^2)^\frac{n}{2}dy\right)\\
			&\leq c \int _0^{2/k}|\partial_y \zeta_k|^2 t^\frac{n}{2} dt \leq  c k^2 \left(\frac{1}{k}\right)^\frac{n+2}{2}\xrightarrow{k\rightarrow +\infty}0
		\end{align*}
		where we performed the change of variables $t=1+y$ in the first integral and $t=1-y$ in the second. It tells us that $\mathbf{1}\in H^1_0(S)$ with $\Gamma_S(\mathbf{1})=0$. 
		Let us now evaluate $\norm{L_S \zeta_k}_{L^2(S)}$ under the hypothesis $n>4$.
		\begin{align*}
			\int_S L_S(\zeta_k)^2d\mu_S \leq c \left( \int_0^{2/k}|\partial_{yy} \zeta_k|^2 t^{\frac{
					n}{2}+1}dt+ \int _0^{2/k}|\partial _y \zeta_k|^2 t^{\frac{n}{2}-1}dt\right)\leq c k^4 \left(\frac{1}{k}\right)^\frac{n+4}{2}\xrightarrow{k\rightarrow +\infty}0,
		\end{align*}
		proving the last part of the proposition.
	\end{proof}
	In the following sections it will be necessary to approximate functions under the $D(L_S)$ norm introduced in \eqref{eqnormDL}. This is the reason why we have this proposition.
	\begin{proposition}\label{lemA.2}
		Assume $n>4$. Let $f$ such that $\Gamma_S(f),L_S f\in L^2(S)$ and $f\in L^\infty$. Then, there exists $f_k\in \mathcal{A}_0$ such that $f_k\rightarrow f$ in $D(L_S)$. 
	\end{proposition}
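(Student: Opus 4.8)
The plan is a two-step approximation: first localise $f$ to a fixed spherical shell, then regularise near the coordinate hyperplanes, tracking the $D(L_S)$ norm at each step.

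\emph{Step 1: radial cut-off.} Let $\zeta_k\in\mathcal A_0$ be the radial functions of Proposition \ref{prop1h0} and set $g_k:=\zeta_k f$. Since $f\in L^\infty\subset L^2(S)$ and $0\le\zeta_k\le 1$ with $\zeta_k\to\mathbf 1$, dominated convergence gives $g_k\to f$ in $L^2(S)$; moreover $\Gamma_S\bigl((\zeta_k-1)f\bigr)\le 2(\zeta_k-1)^2\Gamma_S(f)+2\|f\|_\infty^2\Gamma_S(\zeta_k)$ and $\int_S\Gamma_S(\zeta_k)\,d\mu_S\to0$ give $g_k\to f$ in $H^1_0(S)$. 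For the generator, expand the commutator
\begin{equation*}
L_S g_k-\zeta_k\,L_S f=2\,\Gamma_S(\zeta_k,f)+f\,L_S\zeta_k .
\end{equation*}
The last term satisfies $\|f\,L_S\zeta_k\|_{L^2(S)}\le\|f\|_\infty\|L_S\zeta_k\|_{L^2(S)}\to0$ by Proposition \ref{prop1h0} (this is where $n>4$ is used). For the cross term, with $A_k$ the shell in which $\partial_y\zeta_k$ is supported, the pointwise Cauchy--Schwarz inequality for $\Gamma_S$ and Hölder's inequality give
\begin{equation*}
\int_S\bigl|\Gamma_S(\zeta_k,f)\bigr|^2 d\mu_S\le\int_{A_k}\Gamma_S(\zeta_k)\,\Gamma_S(f)\,d\mu_S\le\|\Gamma_S(\zeta_k)\|_{L^2(A_k)}\,\|\Gamma_S(f)\|_{L^2(A_k)},
\end{equation*}
and from \eqref{eqwarped}--\eqref{eqLwarpd} one computes $\Gamma_S(\zeta_k)\le c\,k$ on $A_k$ and $\mu_S(A_k)\le c\,k^{-n/2}$, so $\|\Gamma_S(\zeta_k)\|_{L^2(A_k)}\le c\,k^{1-n/4}$ stays bounded for $n>4$ while $\|\Gamma_S(f)\|_{L^2(A_k)}\to0$ because $\Gamma_S(f)\in L^2(S)$ and $\mu_S(A_k)\to0$. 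Hence $g_k\to f$ in $D(L_S)$, and we may assume from now on that $f$ is supported in a fixed compact subset of $\overline{\R^d_*}\setminus\{0\}$.

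\emph{Step 2: smoothing.} On the support of $f$ the conformal factors in \eqref{defnE} are bounded above and below, so $L_S$ is comparable there to the degenerate operator $u\mapsto x^{-A}\operatorname{div}(x^A\nabla u)$, and $f$ lies in the associated weighted Sobolev space. I regularise as follows: reflect $f$ evenly across each hyperplane $\{x_i=0\}$ with $0<A_i\le1$, translate it inward by $\delta$ in the directions with $A_i>1$, mollify the resulting $\bar f_\delta$ with a mollifier $\rho_\varepsilon$ ($\varepsilon<\delta$) chosen even in the reflected variables, and restrict back to $\overline{\R^d_*}$. The outcome $f_{\delta,\varepsilon}$ belongs to $C^\infty_c(\overline{\R^d_*}\setminus\{0\})=\mathcal A_0$; it is even, hence has vanishing normal derivative, on each $\{x_i=0\}$ with $A_i\le1$, which is precisely what keeps the singular term $\tfrac{A_i}{x_i}\partial_i(\cdot)$ in $L^2(x^A dx)$ and so ensures $f_{\delta,\varepsilon}\in D(L_S)$. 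Letting $\varepsilon\to0$, then $\delta\to0$, and extracting a diagonal sequence, $f_{\delta,\varepsilon}\to f$ in $D(L_S)$ by: (i) inward translation is a contraction and is strongly continuous on the weighted $L^p$ spaces because $A_i\ge0$ forces $(x_i-\delta)^{A_i}\le x_i^{A_i}$; (ii) on $\overline{\R^d_*}$ the translated data only involve $f$ where the weight is comparable to Lebesgue measure, so mollification converges there by the classical theory (and the $L^2$-part also follows directly from $f\in L^\infty$); (iii) reflection intertwines $L_S$ with the reflected degenerate operator without generating a boundary term, exactly because $x^A=0$ on $\partial\R^d_*$, as in Proposition \ref{thmipptrue}, so the reflected/mollified generators converge — here one invokes the reflection and Schauder-type results for such degenerate/singular operators from \cite{CoraFiorPagliaVita} (and \cite{SirTerVit21a,SirTerVit21b,TTV} in the one-weight case).

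The first step is routine once Proposition \ref{prop1h0} is available; the main obstacle is the second, namely mollifying against the weight $x^A$, which degenerates on $\partial\R^d_*$. The difficulty is twofold: one must keep the approximants inside $D(L_S)$ — which, when some $A_i\in(0,1]$, forces a vanishing-normal-derivative condition that is not automatic and motivates the even reflection — and one must establish $L_S$-convergence in $L^2(S)$ in spite of the degeneracy, which relies on the vanishing of the conormal boundary term (Proposition \ref{thmipptrue}) and on the regularity/approximation machinery of \cite{CoraFiorPagliaVita}.
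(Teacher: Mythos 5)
Your Step~1 is essentially the paper's entire proof: the paper simply sets $f_k := f\zeta_k$, splits
\[
\int_S(L_S(f-f_k))^2\,d\mu_S \le 2\int_S(L_S f)^2(\zeta_k-1)^2\,d\mu_S + 4\int_S\Gamma_S(f,\zeta_k)^2\,d\mu_S + 2\int_S f^2(L_S\zeta_k)^2\,d\mu_S,
\]
kills the first term by dominated convergence, the last by $\|f\|_\infty\|L_S\zeta_k\|_{L^2}\to 0$, and the cross term by the pointwise Cauchy--Schwarz for $\Gamma_S$ followed by H\"older in $L^2$, using exactly the bound $\|\Gamma_S(\zeta_k)\|_{L^2}^2\lesssim k^{(4-n)/2}\to 0$ for $n>4$. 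Your commutator expansion and shell estimates ($\Gamma_S(\zeta_k)\lesssim k$ on $A_k$, $\mu_S(A_k)\lesssim k^{-n/2}$) are an equivalent reformulation of this, and it is in fact stronger than you claim: $k^{1-n/4}\to 0$ for $n>4$, not merely bounded.

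Your Step~2 does not appear in the paper, and here is where you diverge. You correctly notice that $f\zeta_k$ is a product of an $\mathcal A_0$ function with an $f$ that is only assumed to satisfy $\Gamma_S(f),L_Sf\in L^2$ and $f\in L^\infty$, so $f\zeta_k$ need not lie in $C^\infty_c(\overline{\R^d_*}\setminus\{0\})$ without extra regularity on $f$. The paper silently ignores this: in every application (Theorem~\ref{thmg2int}, and for $\Phi=v^{-(q-2)/2}$ in Section~\ref{sec3}) the function $f$ is at least $C^3(\overline{\R^d_*}\setminus\{0\})$ by the elliptic regularity results invoked from \cite{CoraFiorPagliaVita}, so $f\zeta_k$ is $C^3_c$ with $L_S(f\zeta_k)\in C^1_c$, which is all that is used afterwards (the hypotheses of Lemma~\ref{corgam2}). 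In other words, the paper's proof is only valid for smooth $f$ but is only ever applied to smooth $f$; it does not attempt the general $L^\infty/D(L_S)$ approximation you set out to prove.

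Your Step~2, however, is not tight enough to close that more ambitious statement. The central difficulty you gesture at --- controlling $L_S$ through translation and reflection near the degenerate set $\{x_i=0\}$ --- is not a formality. For a shift in a degenerate direction the operator does \emph{not} commute with translation: writing schematically $L_S\sim \Delta + \tfrac{A_i}{x_i}\partial_i + \dots$, one gets
\[
L_S\bigl(f(\cdot+\delta e_i)\bigr) - (L_Sf)(\cdot+\delta e_i) = A_i\Bigl(\frac{1}{x_i}-\frac{1}{x_i+\delta}\Bigr)\partial_i f(\cdot+\delta e_i) + \dots,
\]
and the coefficient $\tfrac{1}{x_i}-\tfrac{1}{x_i+\delta}\sim\delta/x_i^2$ blows up near $\{x_i=0\}$; the monotonicity $(x_i-\delta)^{A_i}\le x_i^{A_i}$ controls $L^2$ norms of $f$ itself but says nothing about this commutator. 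Likewise, the claim that even reflection ``intertwines $L_S$ with the reflected degenerate operator'' and that Schauder regularity from \cite{CoraFiorPagliaVita} yields $L^2$-convergence of the generators conflates a priori estimates with approximation-density statements; note also that \cite[Lemmas~3.3--3.4]{CoraFiorPagliaVita}, which the paper invokes elsewhere for density, require $A_i\notin(0,1)$. So as written Step~2 is a plausible plan rather than a proof, and it addresses a level of generality the paper never claims (nor uses). If you want the stronger statement, the cleanest route is probably to prove it first for $f$ in the $C^3$ class actually needed and treat any further relaxation of regularity as a separate density lemma with the commutator and reflection steps spelled out.
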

	\begin{proof}
		We define $f_k:=f \zeta_k$ with $\zeta_k$ given by Proposition \ref{prop1h0}. We can check (in the variables $(y,\theta)$) that 
		\begin{align*}
			\int_S |f-f_k|^2 d\mu_S\rightarrow 0\qquad\int_S (L_S(f-f_k))^2 d\mu_S\rightarrow 0
		\end{align*}
		Indeed, the first limit holds by dominated convergence theorem. For the second limit, we write
		\begin{align*}
			\int_S(L_S(f-f_k))^2 d\mu_S\leq& 2\int_S(L_S f)^2(\zeta_k-1)^2d\mu_S +4\int_S\Gamma_S(f,\zeta_k)^2d\mu_S +2\int_S f^2(L_S \zeta_k)^2d\mu_S\\
			=&I_1+I_2+I_3
		\end{align*}
		$I_1\rightarrow0 $ by dominated convergence theorem, $I_2\rightarrow 0$ since $\Gamma_S(f)\in L^2$ and $\Gamma_S(\zeta_k)\rightarrow 0$ in $L^2$. Indeed,
		\begin{align*}
			\int_S\Gamma_S(f,\zeta_k)^2 d\mu_S\overset{\text{Cauchy-Schwarz}}{\leq } \int_S \Gamma_S(f)\Gamma_S(\zeta_k)d\mu_S\overset{\text{H\"older}}{\leq }\left(\int_S\Gamma_S(f)^2d\mu_S\right)^\frac{1}{2}\left(\int_S \Gamma_S(\zeta_k)^2d\mu_S\right)^\frac{1}{2}
		\end{align*}
		and
		\begin{align*}
			\int_S \Gamma_S(\zeta_k)^2d\mu_S\leq c k^4 \int_0^{2/k}t^{\frac{n+2}{2}}dt\leq c k^{\frac{4-n}{2}}\rightarrow 0
		\end{align*}
		Eventually,
		\[
		I_3\leq C \|f\|^2_\infty k^{\frac{4-n}{2}}\rightarrow 0
		\]
		proving the thesis.
	\end{proof}
	To conclude this section, we prove a lemma which  concerns the iterated carr\'e du champ. It is an application of the integration by parts under certain hypotheses of regularity. 
	\begin{lemma}\label{corgam2}
		Assume $f\in C^3_c(\overline{\R^d_*}\setminus\{0\})$, and $L_S f\in C_c^1(\overline{\R^d_*}\setminus\{0\})$. Then,
		\begin{equation}\label{eqgam2}
			\int_S\Gamma^2_S (f)d\mu_S=\int_S (L_S f)^2 d\mu_S.
		\end{equation} 
		
	\end{lemma}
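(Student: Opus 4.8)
The identity \eqref{eqgam2} is the integrated Bochner-type formula $\int \Gamma_2 = \int (Lf)^2$, which for a weighted manifold \emph{without boundary} is immediate from the definition $\Gamma_2(f) = L\frac{\Gamma(f)}{2} - \Gamma(f, Lf)$: integrating the first term kills it by \eqref{eqippclassic}, and the second becomes $\int (Lf)^2$ after another integration by parts. The only issue here is that $S = \mathbb{R}^d_*$ has a genuine boundary $\partial\mathbb{R}^d_*$ (the coordinate hyperplanes where some $x_i = 0$), and $f$ is not assumed to vanish there. So the entire proof is a matter of checking that the boundary terms produced by the two integrations by parts vanish, exactly as in Proposition \ref{thmipptrue}, using that the weight $x^A$ (equivalently $e^{-W_S}$) degenerates to zero on $\partial\mathbb{R}^d_*$.

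\textbf{Steps.} First I would write $\int_S \Gamma_2^S(f)\, d\mu_S = \int_S L_S\!\left(\tfrac{1}{2}\Gamma_S(f)\right) d\mu_S - \int_S \Gamma_S(f, L_S f)\, d\mu_S$. The function $\tfrac12\Gamma_S(f)$ lies in $C^2_c(\overline{\mathbb{R}^d_*}\setminus\{0\})$ under the hypothesis $f \in C^3_c$, so by the boundary integration by parts formula \eqref{eqipptrue} applied with $h = \mathbf{1}$ (or rather the exhaustion argument of Proposition \ref{thmipptrue}), $\int_S L_S\!\left(\tfrac12\Gamma_S(f)\right) d\mu_S$ equals a boundary integral over $\partial\mathbb{R}^d_*$ against the density $e^{-W_S}\sqrt{|g_S|}$, which vanishes because $x^A = 0$ there; the quantitative estimate is the same $c\,\varepsilon^{A_i} \to 0$ bound used in \eqref{eqapprox}. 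Second, for $\int_S \Gamma_S(f, L_S f)\, d\mu_S$ I would integrate by parts moving the gradient off $L_S f$: since $f \in \mathcal{A}_0 \subset C^\infty(\mathbb{R}^d\setminus\{0\})$ and $L_S f \in C^1_c(\overline{\mathbb{R}^d_*}\setminus\{0\})$, I can apply \eqref{eqipp} (in the symmetric form noted right after Proposition \ref{thmipptrue}, valid for $h \in \mathcal{A}_0$, or directly the boundary version) with the roles $h = L_S f$, to get $\int_S \Gamma_S(f, L_S f)\, d\mu_S = -\int_S (L_S f)(L_S f)\, d\mu_S = -\int_S (L_S f)^2\, d\mu_S$ plus a boundary term that again vanishes by the degeneracy of $x^A$. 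Combining, $\int_S \Gamma_2^S(f)\, d\mu_S = 0 - (-\int_S (L_S f)^2 d\mu_S) = \int_S (L_S f)^2 d\mu_S$.

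\textbf{Integrability and the warped coordinates.} Before passing any limits I should check all integrands are in $L^1(S, d\mu_S)$; this is easiest in the warped-product variables $(y,\theta)$ of \eqref{eqwarped}--\eqref{eqLwarpd}, where $d\mu_S = \alpha^{-1}(1-y^2)^{n/2-1}\, dy\, d\mu_{\mathbb{S}^{d-1}_*}$ and the operators have the explicit form in \eqref{eqLwarpd}: since $f$ has compact support in $\overline{\mathbb{R}^d_*}\setminus\{0\}$, in these coordinates the support stays away from $y = \pm 1$ and from the poles, so $\Gamma_S(f)$, $L_S f$, $\Gamma_S(f, L_S f)$ and $L_S(\Gamma_S(f))$ are all bounded with compact support away from $y=\pm1$; near $\partial\mathbb{S}^{d-1}_*$ the only mild singularity is the $1/\theta_i$ factor from $\nabla(\log\theta^A)$ in $L_\theta$, controlled exactly as the $c/x_1$ versus $c\,x_1^{A_1}dx$ bound in Proposition \ref{thmipptrue}. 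So all integrals converge absolutely and the exhaustion-and-limit argument of Proposition \ref{thmipptrue} goes through verbatim.

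\textbf{Main obstacle.} There is no deep difficulty: the whole content is bookkeeping of boundary terms. The one point requiring a little care is that the first integration by parts is applied to $\Gamma_S(f)$, which involves second derivatives of $f$ hidden inside $L_S$ of it (third derivatives), which is why $f \in C^3_c$ is assumed; and the second requires $L_S f$ to itself be $C^1_c$, which is why that hypothesis is imposed separately rather than being automatic. Everything else is a repetition of the approximation scheme already established for \eqref{eqippE} and \eqref{eqipp}.
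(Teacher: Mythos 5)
Your proof is correct and is essentially the same as the paper's: both expand $\Gamma_2^S$ via its definition, kill the $\int_S L_S\!\left(\tfrac12\Gamma_S(f)\right) d\mu_S$ term by integrating by parts against $\mathbf{1}$ via \eqref{eqipp}, and convert $-\int_S \Gamma_S(f,L_Sf)\,d\mu_S$ into $\int_S (L_Sf)^2\,d\mu_S$ by a second application of \eqref{eqipp}. The only difference is cosmetic: you re-run the exhaustion argument of Proposition \ref{thmipptrue} explicitly and discuss integrability in the warped coordinates, while the paper simply cites \eqref{eqipp}, tacitly extending it to the $C^2_c$ and $C^1_c$ functions $\Gamma_S(f)$ and $L_S f$.
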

	\begin{proof}
		Being $\mathbf{1}\in C^\infty(\R^d)$ and $\Gamma_S(f)\in C^2_c( \overline{\R^d_*}\setminus\{0\})$, we can apply \eqref{eqipp} and deduce that 
		\begin{equation*}
			\int_S L_S(\Gamma_S(f))d\mu_S=-\int_S \Gamma_S(1,\Gamma_S(f))d\mu_S=0.
		\end{equation*}
		As a consequence, \eqref{eqgam2} becomes 
		\begin{equation*}
			-\int_S \Gamma_S(f,L_S f)d\mu_S=\int_S(L_S f)^2 d\mu_S
		\end{equation*}
		which is true applying again \eqref{eqipp}.
	\end{proof}
	Before delving into the main theorem, we still need to deduce some properties for the space we consider. Indeed, we need to clarify an integration by parts formula for $\mathbb{S}^{d-1}_*$ and prove a curvature bound under the generalized Felli-Schneider condition. Namely, in the following section we prove an integrated version of the curvature dimension condition \eqref{FScond}, that is an integral inequality involving $\Gamma_2^S$, $\Gamma_S$ and $L_S$.
	\subsection{Properties of $\mathbb{S}^{d-1}_*$ and integrated curvature dimension condition}\label{monsphere}\ \\
	Suppose that at least one entry in $A\in \R^d$ is equal to zero. Be it without loss of generality $A_d=0$. We then consider the Monomial Sphere $\mathbb{S}^{d-1}_*$ as defined in \eqref{monspheredef} and we prove the following integration by parts formula.
	\begin{proposition}\label{thmippsfmon}
		Let $f\in C^1(\mathbb{S}^{d-1})$ and $h\in C^2(\mathbb{S}^{d-1})$. Then,
		\begin{equation*}
			-\int_{{\mathbb{S}^{d-1}_*}}f L_\theta h d\mu_{\mathbb{S}^{d-1}_*}=\int_{{\mathbb{S}^{d-1}_*}} \Gamma^\theta(f,h)d\mu_{\mathbb{S}^{d-1}_*}.
		\end{equation*}
	\end{proposition}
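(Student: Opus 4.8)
The plan is to reduce the integration by parts formula on the monomial sphere $\mathbb{S}^{d-1}_*$ to the one already established on the warped space $S$ in Proposition \ref{eqipp}. Since $S = I \times_f \mathbb{S}^{d-1}_*$ with the explicit expressions \eqref{eqLwarpd} for $d\mu_S$, $\Gamma_S$, and $L_S$, a function depending only on the $\theta$-variable sees $L_S$ act (essentially) as $\frac{1}{1-y^2}L_\theta$, and $d\mu_S$ factors as $\frac{(1-y^2)^{n/2-1}}{\alpha}\,dy\,d\mu_{\mathbb{S}^{d-1}_*}$. So the strategy is: given $f \in C^1(\mathbb{S}^{d-1})$ and $h \in C^2(\mathbb{S}^{d-1})$, pick a fixed cutoff $\chi \in C^\infty_c(I)$ (or a product of such a cutoff with the given functions, viewed as radial-times-spherical functions on $S$), apply \eqref{eqipp} to $\tilde f := \chi(y) f(\theta)$ and $\tilde h := \chi(y) h(\theta)$ — noting that these lie in $\mathcal{A}_0$ and $C^\infty(\mathbb{R}^d\setminus\{0\})$ respectively after extending $\chi$ appropriately — and then separate the $y$-integral from the $\theta$-integral using Fubini. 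The $y$-integrals produce harmless positive constants (integrals of powers of $(1-y^2)$ against $\chi$ and its derivatives), and what remains on the $\theta$-side is exactly the claimed identity, provided the cross terms and the $\partial_{yy}$, $\partial_y$ pieces of $L_S$ cancel or can be arranged to cancel.

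The cleanest route avoids the cross terms entirely: choose $\tilde h(y,\theta) = h(\theta)$ (constant in $y$, hence in $\mathcal{A}_0$-closure / smooth away from the origin — one must check it extends to an element where \eqref{eqipp} applies, using the density/approximation propositions \ref{prop1h0} and \ref{lemA.2} if necessary) and $\tilde f(y,\theta) = \zeta_k(y) f(\theta)$ with $\zeta_k$ the cutoffs from Proposition \ref{prop1h0}. Then $\Gamma_S(\tilde f) $ and $L_S \tilde h$ restricted to $h$ constant in $y$ give $L_S \tilde h = \frac{1}{1-y^2} L_\theta h$ and $\Gamma_S(\tilde f, \tilde h) = \zeta_k(y)\frac{1}{1-y^2}\Gamma^\theta(f,h)$, since the $\partial_y$-contributions to $\Gamma_S$ vanish on $\tilde h$. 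Plugging into \eqref{eqipp}:
\begin{equation*}
-\int_I \int_{\mathbb{S}^{d-1}_*} \zeta_k(y) f\, \frac{1}{1-y^2} L_\theta h \,\frac{(1-y^2)^{n/2-1}}{\alpha}\,d\mu_{\mathbb{S}^{d-1}_*}\,dy = \int_I\int_{\mathbb{S}^{d-1}_*}\zeta_k(y)\frac{\Gamma^\theta(f,h)}{1-y^2}\,\frac{(1-y^2)^{n/2-1}}{\alpha}\,d\mu_{\mathbb{S}^{d-1}_*}\,dy.
\end{equation*}
The $y$-integral $\int_I \zeta_k(y)(1-y^2)^{n/2-2}\,dy$ appears as a common positive factor on both sides (finite since $n>4$, or after letting $k\to\infty$ by monotone convergence when $n>4$), and dividing it out yields precisely the stated formula.

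The main obstacle I anticipate is \emph{justifying that \eqref{eqipp} genuinely applies to the test functions built from $f$ and $h$}: the proposition as stated requires $h \in \mathcal{A}_0 = C^\infty_c(\overline{\mathbb{R}^d_*}\setminus\{0\})$ and $f \in C^\infty(\mathbb{R}^d\setminus\{0\})$, whereas here $f,h$ are only $C^1$, $C^2$ on the sphere and, once pulled back radially, are neither compactly supported nor smooth. Handling this requires (i) inserting the radial cutoff $\zeta_k$ to get compact support away from $0$ and $\infty$, (ii) a mollification argument in the $\theta$-variable to upgrade $C^1/C^2$ regularity to smoothness, controlling errors in $\Gamma^\theta$ and $L_\theta$ uniformly, and (iii) passing to the limit $k\to\infty$ using that the $y$-weight $(1-y^2)^{n/2-2}$ is integrable on $I$ precisely because $n>4$ — exactly the hypothesis flagged earlier in the paper. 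One should also double-check that the boundary of $\mathbb{S}^{d-1}_*$ (where some $\theta_i=0$) contributes no boundary term, which follows from the same vanishing-weight argument as in Proposition \ref{thmipptrue} since $d\mu_{\mathbb{S}^{d-1}_*} = \theta^A\,dV_{\mathbb{S}^{d-1}}$ degenerates there. Modulo these routine-but-necessary approximation steps, the identity is a direct consequence of the warped-product structure \eqref{eqwarped}–\eqref{eqLwarpd} and \eqref{eqipp}.
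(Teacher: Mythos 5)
Your proposal is correct in outline but takes a genuinely different route from the paper. The paper proves Proposition \ref{thmippsfmon} directly on the sphere: it stereographically projects $\mathbb{S}^{d-1}_*$ to $\R^{d-1}_*$, truncates at $\{z_1\geq\varepsilon\}$, applies \eqref{eqipptrue}, and lets $\varepsilon\to 0$ using the degeneracy of the weight $e^{-W_\theta}\sim z_1^{A_1}$ to kill the boundary term, with a final partition-of-unity argument to remove the support assumption near the poles. Your approach instead pulls back to the warped product $S = I\times_f\mathbb{S}^{d-1}_*$, tests \eqref{eqipp} with $\tilde h(y,\theta)=h(\theta)$ and $\tilde f(y,\theta)=\zeta_k(y)f(\theta)$, uses \eqref{eqLwarpd} to factor both sides into a common positive $y$-integral times the desired $\theta$-identity, and cancels the $y$-factor. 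The algebra is sound: with $\tilde h$ independent of $y$, indeed $L_S\tilde h=\frac{1}{1-y^2}L_\theta h$, $\Gamma_S(\tilde f,\tilde h)=\frac{\zeta_k}{1-y^2}\Gamma^\theta(f,h)$, and both integrands carry the same factor $\zeta_k(y)(1-y^2)^{n/2-2}/\alpha$, whose $y$-integral is finite and positive for any fixed (large) $k$ because $\zeta_k$ has compact support in $(-1,1)$.

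Two small inaccuracies are worth flagging. First, your $n>4$ remark is a red herring: the weight $(1-y^2)^{n/2-2}$ is integrable on $I$ for $n>2$, and more to the point, for a fixed $k$ the cutoff $\zeta_k$ makes the $y$-integral finite regardless of $n$, so no $k\to\infty$ passage is needed at all. Correspondingly, the paper's Proposition \ref{thmippsfmon} carries no hypothesis on $n$. Second, and more substantively, your route forces a mollification in $\theta$ that the paper's direct proof simply does not need: \eqref{eqipp} requires $\tilde f\in\mathcal{A}_0$ and $\tilde h\in C^\infty(\R^d\setminus\{0\})$, whereas your pullbacks are only $C^1$, resp.\ $C^2$. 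Mollifying $f,h$ on $\mathbb{S}^{d-1}$ and passing to the limit by dominated convergence — using that $|\nabla_\theta W_\theta|\lesssim\sum_i A_i/\theta_i$ is dominated by an $L^1(d\mu_{\mathbb{S}^{d-1}_*})$ function — does close this gap, but it is a genuine extra step that you sketch rather than carry out. The paper's stereographic argument handles $C^1/C^2$ regularity natively because it integrates by parts in $\R^{d-1}$ against test data of exactly that smoothness. So: your argument is valid modulo the mollification step, and it has the pleasant feature of re-using \eqref{eqipp} rather than redoing the $\varepsilon$-truncation on the sphere; the trade-off is that it requires one extra approximation layer and a slightly less direct chain of dependencies.
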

	\begin{proof}
		For symplicity, assume $A_1>0$ and $A_i=0$ for all $i=2,...d$. Assume also that $f$ is supported in $\mathbb{S}^{d-1}\setminus\{N\}$. The stereographic projection and dominated convergence theorem tell us that 
		\begin{align*}
			-\int_{{\mathbb{S}^{d-1}_*}}f L_\theta h d\mu_{\mathbb{S}^{d-1}_*}=-\int_{{\mathbb{\R}^{d-1}_*}}f L_\theta h e^{-W_\theta}\sqrt{|g_{\mathbb{S}^{d-1}}|}dz=-\lim_{\varepsilon\rightarrow0}\int_{\{z_1\geq \varepsilon\}} f L_\theta h e^{-W_\theta}\sqrt{|g_{\mathbb{S}^{d-1}}|}dz
		\end{align*}
		Similarly to the proof of Theorem \ref{thmipptrue}, there exists $R>0$ such that $\text{supp} f=:\underline{f}\subset \subset \overline{\R^{d-1}_*}\cap B_R=:M$, where $B_R$ is the ball centered in $0$ in $\R^{d-1}$. 
		Call $M_\varepsilon:=M\cap \{z_1\geq\varepsilon\}$. For $\varepsilon$ sufficiently small we have that $\partial M_\varepsilon=I_\varepsilon\cup II_\varepsilon$ with 
		\begin{align*}
			&I_\varepsilon:= M_\varepsilon\cap \partial  B_R\\
			&II_\varepsilon:= M_\varepsilon\cap \{z_1=\varepsilon\}
		\end{align*}
		Thus, using \eqref{eqipptrue} we deduce
		\begin{align*}
			\begin{split}
				-\int_{M_\varepsilon} f L_\theta h e^{-W_\theta}\sqrt{|g_{\mathbb{S}^{d-1}}|}dz&=-\int_{\partial M_\varepsilon} f g_{\mathbb{S}^{d-1}}^{ij}\partial_j h \nu_i e^{-W}\sqrt{|g_{\mathbb{S}^{d-1}}|}d\mathcal{H}^{d-1}+\int_{M_\varepsilon} \Gamma^\theta(f,h)e^{-W_\theta}\sqrt{|g_{\mathbb{S}^{d-1}}|}dz\\
				&=-\int_{II_\varepsilon} f g_{\mathbb{S}^{d-1}}^{1j}\partial_j h \nu_1 e^{-W}\sqrt{|g_{\mathbb{S}^{d-1}}|}d\mathcal{H}^{d-1}+\int_{M_\varepsilon} \Gamma^\theta(f,h)e^{-W_\theta}\sqrt{|g_{\mathbb{S}^{d-1}}|}dz
			\end{split}
		\end{align*}
		However, 
		\begin{align*}
			\left|\int_{II_\varepsilon} f g^{1j}_{\mathbb{S}^{d-1}}\partial_j h \nu_1 e^{-W}\sqrt{|g_{\mathbb{S}^{d-1}}|}d\mathcal{H}^{d-1}\right|\leq c \varepsilon^{A_1},\\ L_\theta h  \in L^1(M,e^{-W_\theta}\sqrt{|g_{\mathbb{S}^{d-1}}|}dz),\qquad \Gamma^\theta(f,h)\in L^1(M,e^{-W_\theta}\sqrt{|g_{\mathbb{S}^{d-1}}|}dz) 
		\end{align*}
		so we can pass to the limit as $\varepsilon\rightarrow 0$, proving the thesis if $\underline{f}\subseteq \mathbb{S}^{d-1}_*\setminus\{N\}$. Clearly the same reasoning applies also if $\underline{f}\subseteq \mathbb{S}^{d-1}_*\setminus\{S\}$. Eventually, to prove the general case, we consider a partition of unity $\eta_i$, $i=1,2$ subordinate to $\mathbb{S}^{d-1}\setminus\{N\}\cup \mathbb{S}^{d-1}\setminus\{S\}$. Indeed,
		\begin{align*}
			\int_{{\mathbb{S}^{d-1}_*}} fL_\theta h d\mu_{\mathbb{S}^{d-1}_*}&=-\sum_{i=1}^2\int_{{\mathbb{S}^{d-1}_*}} \eta_i f L_\theta h d\mu_{\mathbb{S}^{d-1}_*}=\sum_{i=1}^2\int_{{\mathbb{S}^{d-1}_*}} \Gamma^\theta(\eta_i f,h) d\mu_{\mathbb{S}^{d-1}_*}\\
			&=\int_{{\mathbb{S}^{d-1}_*}}\Gamma^\theta(\sum_{i=1}^2\eta_i f,h)d\mu_{\mathbb{S}^{d-1}_*}=\int_{{\mathbb{S}^{d-1}_*}}\Gamma^\theta\left( f,h\right)d\mu_{\mathbb{S}^{d-1}_*}
		\end{align*}
	\end{proof}
	Lastly, as proved later in paragraph \ref{subseccdsphere}, $\mathbb{S}^{d-1}_*$ satisfies a $CD(D-2,D-1)$ condition, namely that 
	\begin{equation}\label{eqcdsp}
		\Gamma_2^\theta(f)-(D-2)\Gamma^\theta(f)-\frac{1}{D-1}(L_\theta f)^2\geq 0 \qquad\forall f\in C^\infty(\mathbb{S}^{d-1}_*).
	\end{equation}
	We now have all the ingredients to prove the integrated curvature dimension condition. 
	\begin{theorem}\label{thmintcd}
		Under the hypothesis 
		\begin{equation}\label{hpalphaint}
			\alpha^2\leq \frac{	D-1}{n-1}
		\end{equation}
		$S$ satisfies the integrated $CD(\rho,n)$ condition, with $\rho =\alpha^2(n-1)$. Namely, for all $f$  such that
		\begin{itemize}
			\item[(i)]$f> 0$
			\item[(ii)] $f\in C^3(\overline{\R^d_*}\setminus\{0\})$
			\item[(iii)] $L_S f \in C^1(\overline{\R^d_*}\setminus\{0\})$
		\end{itemize}
		and $\nu\in \R$, $\nu>n$, it holds
		\begin{equation}\label{eqintcd}
			\int _{\mathbb{S}^{d-1}_*} \left(\Gamma_2^S(f)-\alpha^2(n-1)\Gamma_S(f)-\frac{1}{n}(L_Sf)^2\right)f^{1-\nu} d\mu_{\mathbb{S}^{d-1}_*} \geq 0 
		\end{equation}
	\end{theorem}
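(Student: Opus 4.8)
The plan is to compute $\Gamma_2^S(f)$ explicitly in the warped coordinates $(y,\theta)\in I\times\mathbb{S}^{d-1}_*$ of \eqref{eqwarped}--\eqref{eqLwarpd} and then integrate the resulting expression, multiplied by the weight $f^{1-\nu}$, over each fibre $\{y\}\times\mathbb{S}^{d-1}_*$. I write $L_Sf=L_rf+\tfrac{1}{1-y^2}L_\theta f$, where $L_rf:=\alpha^2\big[(1-y^2)\partial_{yy}f-ny\partial_yf\big]$ is the one-dimensional radial operator; it has carr\'e du champ $\Gamma_r(f)=\alpha^2(1-y^2)(\partial_yf)^2$ and invariant measure $\tfrac1\alpha(1-y^2)^{n/2-1}\,dy$, and $\Gamma_S=\Gamma_r+\tfrac{1}{1-y^2}\Gamma^\theta$. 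Since $A_d=0$ makes the warped picture \eqref{eqwarped} available, and since hypotheses $(i)$--$(iii)$ force $f(y,\cdot)\in C^3$ and $L_\theta f(y,\cdot)=(1-y^2)\big(L_Sf-L_rf\big)(y,\cdot)\in C^1$ on $\overline{\mathbb{S}^{d-1}_*}$ with $f>0$ there, the weight $f^{1-\nu}$ is admissible, the fibre integrals are finite (all integrands extend continuously to $\overline{\mathbb{S}^{d-1}_*}$), and every fibrewise integration by parts invoked below is licit by Proposition \ref{thmippsfmon}.

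Substituting \eqref{eqLwarpd} into $\Gamma_2^S(f)=\tfrac12 L_S\Gamma_S(f)-\Gamma_S(f,L_Sf)$ and sorting terms by the derivatives they involve gives a decomposition
$$\Gamma_2^S(f)=\Gamma_2^r(f)\;+\;\frac{1}{(1-y^2)^2}\,\Gamma_2^\theta(f)\;+\;\frac{\alpha^2\big(1+(3-n)y^2\big)}{(1-y^2)^2}\,\Gamma^\theta(f)\;+\;\mathcal{M}(f),$$
where $\Gamma_2^r(f)=\tfrac12 L_r\Gamma_r(f)-\Gamma_r(f,L_rf)$ is the iterated carr\'e du champ of the one-dimensional operator $L_r$ (a quadratic form in $\partial_{yy}f$ and $\partial_yf$), the third term comes from $L_r$ acting on the conformal factor $(1-y^2)^{-1}$, and $\mathcal{M}(f)$ collects the genuinely mixed contributions, namely $y$-dependent multiples of $\Gamma^\theta(\partial_yf,f)$, of $\Gamma^\theta(\partial_{yy}f,f)$, of $\Gamma^\theta(\partial_yf)$, of $\partial_yf\,L_\theta f$ and of $\partial_yf\,L_\theta(\partial_yf)$. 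The term $\mathcal{M}(f)$ has no pointwise sign --- this is precisely the mechanism behind symmetry breaking --- so the purpose of integrating over the fibre against $f^{1-\nu}$ is to dispose of it. The purely radial block, on the other hand, is harmless and requires no hypothesis: a direct check shows that $\Gamma_2^r(f)-\alpha^2(n-1)\Gamma_r(f)-\tfrac1n(L_rf)^2$ is a perfect square in $(\partial_{yy}f,\partial_yf)$ (its discriminant vanishes identically), i.e. the one-dimensional weighted manifold $\big((-1,1),\Gamma_r,\tfrac1\alpha(1-y^2)^{n/2-1}\,dy\big)$ satisfies $CD(\alpha^2(n-1),n)$ for every $\alpha>0$.

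It remains to control the fibre and mixed blocks. After multiplying $\Gamma_2^S(f)-\alpha^2(n-1)\Gamma_S(f)-\tfrac1n(L_Sf)^2$ by $f^{1-\nu}$ and integrating over $\mathbb{S}^{d-1}_*$, one integrates by parts on the fibre (Proposition \ref{thmippsfmon}) so as to move all derivatives off the second-order factors: each such integration reproduces the naive term together with a correction carrying the factor $(1-\nu)$ and replacing $\Gamma^\theta$ of second-order data by $f^{-1}\Gamma^\theta(f)$. In this way the mixed integrals $\int_{\mathbb{S}^{d-1}_*}\mathcal{M}(f)f^{1-\nu}$ are re-expressed through $\int\partial_{yy}f\,L_\theta f\,f^{1-\nu}$, $\int\partial_yf\,L_\theta f\,f^{1-\nu}$, $\int\partial_{yy}f\,\Gamma^\theta(f)f^{-\nu}$, $\int\partial_yf\,\Gamma^\theta(f)f^{-\nu}$, $\int\Gamma^\theta(\partial_yf)f^{1-\nu}$ (a manifestly non-negative square) and fibre integrals of $\Gamma_2^\theta(f)f^{1-\nu}$; for the last one uses a convex combination of the curvature bound \eqref{eqcdsp} with the identity
$$\int_{\mathbb{S}^{d-1}_*}\Gamma_2^\theta(f)f^{1-\nu}=\int_{\mathbb{S}^{d-1}_*}\Big[(L_\theta f)^2f^{1-\nu}+\tfrac{3(1-\nu)}{2}\,\Gamma^\theta(f)L_\theta f\,f^{-\nu}+\tfrac{\nu(\nu-1)}{2}\,\Gamma^\theta(f)^2 f^{-\nu-1}\Big],$$
itself obtained from Proposition \ref{thmippsfmon}. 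Collecting everything, \eqref{eqintcd} is reduced to the pointwise-in-$y$ non-negativity of an integrated quadratic expression in $\big(\partial_{yy}f,\ \partial_yf,\ \tfrac{1}{1-y^2}L_\theta f,\ \tfrac{1}{1-y^2}f^{-1}\Gamma^\theta(f)\big)$ (together with the separate square $\Gamma^\theta(\partial_yf)$), with coefficients depending on $n,D,\nu,\alpha,y$, weighted by $f^{1-\nu}d\mu_{\mathbb{S}^{d-1}_*}$. This is where the two standing hypotheses enter: $\nu>n$ makes the $(1-\nu)$-corrections produced by the integrations by parts point in the favourable direction, while $\alpha^2\le\frac{D-1}{n-1}$, i.e. $\alpha^2(n-1)\le D-1$, is precisely the discriminant condition rendering this quadratic form positive semidefinite --- at equality it becomes degenerate, a sum of squares. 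The main obstacle is exactly this bookkeeping: carrying out the expansion of $\Gamma_2^S(f)$ without error, and choosing the fibrewise integrations by parts so that all mixed terms collapse into a single quadratic form whose positivity is governed by \eqref{hpalphaint}. Finally, along the same lines as the classical computation in \cite{DSZ}, the argument uses from the fibre only the curvature bound \eqref{eqcdsp} and the integration by parts formula of Proposition \ref{thmippsfmon}, hence it is entirely insensitive to the fact that here the fibre $\mathbb{S}^{d-1}_*$ is the monomial sphere rather than a round one.
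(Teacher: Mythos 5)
Your proposal arrives at the same conclusion through the same essential ingredients — the warped product structure \eqref{eqwarped}, the pointwise fibre condition $CD(D-2,D-1)$ of \eqref{eqcdsp}, the integration by parts formula of Proposition \ref{thmippsfmon}, and the discriminant condition $\alpha^2(n-1)\le D-1$ — but by a genuinely different, and considerably more laborious, route than the paper's. The paper avoids essentially all of the bookkeeping you describe by invoking the pointwise algebraic identity \eqref{eq5.59} (attributed to \cite{DSZ}), which exhibits $\Gamma_2^S(f)-\alpha^2(n-1)\Gamma_S(f)-\tfrac1n(L_Sf)^2$ \emph{already} as a manifestly non-negative square coupling $\partial_{yy}f$ with $L_\theta f$, plus the non-negative term $2\alpha^2\Gamma^\theta\!\left(\tfrac{fy}{1-y^2}+\partial_y f\right)$ (which absorbs exactly the mixed contributions you place in $\mathcal{M}(f)$ and the separate square $\Gamma^\theta(\partial_yf)$), plus the fibre $CD$ defect. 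Once \eqref{eq5.59} is accepted, no completion of squares is needed on the radial/mixed side: one drops the two non-negative terms, applies the pointwise bound \eqref{eq5.61}, and then the integrated fibre bound \eqref{eq5.60}, which is borrowed from \cite[p.~767]{GentZug}; the coefficient $(n-2)\bigl((D-1)-\alpha^2(n-1)\bigr)/(n-1)$ that emerges is non-negative precisely under \eqref{hpalphaint}. The role of $\nu>n$ in the paper is confined to ensuring $B(\nu,D)>0$ in \eqref{eq5.60b} (so that \eqref{eq5.60} holds), not to any sign considerations in integration by parts as in your account. Your approach amounts to re-deriving the content of \eqref{eq5.59} and of \cite{GentZug} from scratch by expanding $\Gamma_2^S$ additively and performing all the fibre integrations by parts explicitly; the identity you state for $\int_{\mathbb{S}^{d-1}_*}\Gamma_2^\theta(f)f^{1-\nu}$, with coefficients $\tfrac{3(1-\nu)}{2}$ and $\tfrac{\nu(\nu-1)}{2}$, is in fact correct, and your observation that the one-dimensional radial block satisfies $CD(\alpha^2(n-1),n)$ with a vanishing discriminant is also correct (it reduces to $\alpha^4\tfrac{n-1}{n}(1-y^2)^2(\partial_{yy}f)^2$, the $L_\theta f=0$ slice of the first square in \eqref{eq5.59}). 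So your plan should close, and it is more self-contained in that it does not rely on quoting \cite{DSZ} and \cite{GentZug}; but you would still need to actually carry out the expansion and verify that the resulting $y$-dependent quadratic form in $\bigl(\partial_{yy}f,\,\partial_yf,\,\tfrac{L_\theta f}{1-y^2},\,\tfrac{\Gamma^\theta(f)}{(1-y^2)f}\bigr)$ is positive semidefinite under \eqref{hpalphaint} — precisely the bookkeeping that \eqref{eq5.59} is designed to short-circuit. A minor inaccuracy: the symmetry-breaking mechanism is governed by the sign of the fibre $CD$ defect (the third term in \eqref{eq5.59}), not by the mixed term $\mathcal{M}(f)$; the mixed terms are always tamed by the squares, regardless of the value of $\alpha$.
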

	\begin{proof}
		Viewing $S$ as the warped product \eqref{eqwarped}, the following formula can be proved (see also \cite{DSZ})		
		\begin{align}\label{eq5.59}
			\begin{split}
				\Gamma_2^S(f)-\alpha^2(n-1)\Gamma_S(f)-\frac{1}{n}(L_Sf)^2&=\frac{1}{n}\left(\alpha^2(1-y^2)\sqrt{n-1}\partial_{yy}f-\frac{1}{\sqrt{n-1}}\frac{L_\theta f}{1-y^2}\right)^2\\&+2\alpha^2\Gamma^\theta\left(\frac{fy}{1-y^2}+\partial_y f\right)\\
				& +\frac{1}{(1-y^2)^2}\left(\Gamma^\theta_2(f)-\alpha^2(n-2)\Gamma^\theta(f)-\frac{1}{n-1}(L_\theta f)^2\right).
			\end{split}
		\end{align}
		for $f\in C^3$.
		Fix $y\in (-1,1)=I$. $(iii)$ implies that $$\norm{L_Sf(y,\cdot)}_{C^1(\mathbb{S}^{d-1}_*)}\leq c_y$$ and $(ii)$ implies in turn that $$\norm{\partial_y f(y,\cdot)}_{C^1(\mathbb{S}^{d-1}_*)}+\norm{\partial_{yy} f(y,\cdot)}_{C^1(\mathbb{S}^{d-1}_*)}\leq c_y$$ for a constant $c_y$ depending on $y$. Thus, \eqref{eqLwarpd} tells us that 
		\[
		\norm{L_\theta f(y,\cdot)}_{C^1(\mathbb{S}^{d-1}_*)}\leq c_y.
		\]
		This last bound allows us to apply Proposition \ref{thmippsfmon} and have 
		\[
		-\int_{{\mathbb{S}^{d-1}_*}}\Gamma^\theta(f,L_\theta f)d\mu_{\mathbb{S}^{d-1}_*}=\int_{{\mathbb{S}^{d-1}_*}}(L_\theta f)^2d\mu_{\mathbb{S}^{d-1}_*}
		\]
		that, together with
		\[
		\int_{{\mathbb{S}^{d-1}_*}}L_\theta \Gamma^\theta( f) d\mu_{\mathbb{S}^{d-1}_*}=0
		\]
		since $\mathbf{1}\in C^\infty(\mathbb{S}^{d-1})$, 
		we get 
		\begin{equation}\label{eqsL=G2}
			\int_{{\mathbb{S}^{d-1}_*}} \Gamma^\theta_2(f)d\mu_{\mathbb{S}^{d-1}_*}=\int_{{\mathbb{S}^{d-1}_*}}(L_\theta f)^2d\mu_{\mathbb{S}^{d-1}_*}.
		\end{equation}
		This last equality together with \eqref{eqcdsp} allows us to apply \cite[p. 767]{GentZug} and deduce
		\begin{equation}\label{eq5.60b}
			\int _{\mathbb{S}^{d-1}_*} \Gamma^\theta_2(f)f^{1-\nu}d\mu_{\mathbb{S}^{d-1}_*}\geq (D-1)\int_{\mathbb{S}^{d-1}_*} \Gamma^\theta (f)f^{1-\nu}d\mu_{\mathbb{S}^{d-1}_*}+B\int_{\mathbb{S}^{d-1}_*} \frac{\Gamma^\theta(f)^2}{f^2}f^{1-\nu}d\mu_{\mathbb{S}^{d-1}_*},
		\end{equation}
		with $B=B(\nu,D)>0$, which yields in particular 
		\begin{equation}\label{eq5.60}
			\int _{\mathbb{S}^{d-1}_*} \Gamma^\theta_2(f)f^{1-\nu}d\mu_{\mathbb{S}^{d-1}_*}\geq (D-1)\int _{\mathbb{S}^{d-1}_*} \Gamma^\theta (f)f^{1-\nu}d\mu_{\mathbb{S}^{d-1}_*}.
		\end{equation}
		
		Also, \eqref{eqcdsp} trivially implies
		\begin{equation}\label{eq5.61}
			-\frac{(L_\theta f)^2}{n-1}\geq \frac{D-1}{n-1}(-\Gamma^\theta_2(f))+\frac{(D-2)(D-1)}{n-1}\Gamma^\theta(f).
		\end{equation}
		Eventually,
		\begin{align}\label{eq5.62}
			\begin{split}
				&\int _{\mathbb{S}^{d-1}_*} \left(\Gamma_2(f)-\alpha^2(n-1)\Gamma_S(f)-\frac{1}{n}(L_Sf)^2 \right)f^{1-\nu}d\mu_{\mathbb{S}^{d-1}_*}\\&\overset{\eqref{eq5.59}}{\geq} \frac{1}{(1-y^2)^2}\int_{\mathbb{S}^{d-1}_*}\left(\Gamma^\theta_2(f)-\alpha^2(n-2)\Gamma^\theta(f)-\frac{1}{n-1}(L_\theta f)^2\right)f^{1-\nu}d\mu_{\mathbb{S}^{d-1}_*}\\
				&\overset{\eqref{eq5.61}}{\geq}\frac{1}{(1-y^2)^2}\int_{\mathbb{S}^{d-1}_*}\left( \frac{n-D}{n-1}\Gamma_2^\theta(f)+\frac{(D-2)(D-1)-\alpha^2(n-2)(n-1)}{n-1}\Gamma^\theta(f)\right)f^{1-\nu}d\mu_{\mathbb{S}^{d-1}_*}\\
				&\overset{\eqref{eq5.60}}{\geq}\frac{1}{(1-y^2)^2}\int_{\mathbb{S}^{d-1}_*} \left(\frac{(n-D)(D-1)+(D-2)(D-1)-\alpha^2(n-2)(n-1)}{n-1}\right)\Gamma^\theta (f)f^{1-\nu}d\mu_{\mathbb{S}^{d-1}_*}\\
				&=\frac{1}{(1-y^2)^2}\frac{n-2}{n-1}\left((D-1)-\alpha^2(n-1)\right)\int_{\mathbb{S}^{d-1}_*} \Gamma^\theta(f)f^{1-\nu}d\mu_{\mathbb{S}^{d-1}_*}\geq 0.
			\end{split}
		\end{align}
		The thesis follows.
	\end{proof}
	\begin{remark}
		Observe that, if we used \eqref{eq5.60b} in place of \eqref{eq5.60} in the last estimate, we would get to 
		\begin{align}
			\begin{split}\label{eq5.62b}
				&\int _{\mathbb{S}^{d-1}_*} \left(\Gamma_2(f)-\alpha^2(n-1)\Gamma_S(f)-\frac{1}{n}(L_Sf)^2 \right)f^{1-\nu}d\mu_{\mathbb{S}^{d-1}_*}\\
				&\geq  \frac{1}{(1-y^2)^2(n-1)}\Bigg[(n-2)\left((D-1)-\alpha^2(n-1)\right)\int_{\mathbb{S}^{d-1}_*} \Gamma^\theta(f)f^{1-\nu}d\mu_{\mathbb{S}^{d-1}_*}\\&+ B(n-D)\int_{\mathbb{S}^{d-1}_*} \frac{\Gamma^\theta(f)^2}{f^2}f^{1-\nu} d\mu_{\mathbb{S}^{d-1}_*} \Bigg]
			\end{split}
		\end{align}
	\end{remark}
	To conclude, it follows a theorem that allows us to prove the positivity of the operator $\int_{\mathbb{S}^{d-1}_*} \Gamma^S_2(\cdot)d\mu_{\mathbb{S}^{d-1}_*}$ for each $y$ fixed.
	\begin{theorem}\label{thmpositquuadr}
		Let 
		\begin{equation*}
			\alpha^2\leq \frac{	D-1}{n-1}
		\end{equation*}
		and functions $f$ such that 
		\begin{itemize}
			\item[(i)] $f\in C^3(\overline{\R^d_*}\setminus\{0\})$
			\item[(ii)] $L_S f \in C^1(\overline{\R^d_*}\setminus\{0\})$.
		\end{itemize}
		Then it holds
		\begin{equation*}
			\int_{\mathbb{S}^{d-1}_*} \left(\Gamma_2^S(f)-\alpha^2(n-1)\Gamma_S(f)-\frac{1}{n}(L_Sf)^2\right)d\mu_{\mathbb{S}^{d-1}_*} \geq 0 
		\end{equation*}
	\end{theorem}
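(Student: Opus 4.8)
The plan is to reduce everything to the warped-product picture and use the pointwise Bochner-type identity \eqref{eq5.59}. Fix $y\in(-1,1)$ and regard $f$ as a function on the slice $\{y\}\times\mathbb{S}^{d-1}_*$. Hypotheses $(i)$ and $(ii)$ guarantee that $f(y,\cdot)\in C^3(\overline{\mathbb{S}^{d-1}_*})$ (hence $\Gamma^\theta(f)\in C^2$) and, via \eqref{eqLwarpd}, that $L_\theta f(y,\cdot)\in C^1(\overline{\mathbb{S}^{d-1}_*})$; in particular every term in \eqref{eq5.59} is bounded on the compact set $\overline{\mathbb{S}^{d-1}_*}$, so it is integrable against the finite measure $d\mu_{\mathbb{S}^{d-1}_*}$. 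Integrating \eqref{eq5.59} over $\mathbb{S}^{d-1}_*$, the first summand $\tfrac1n\bigl(\alpha^2(1-y^2)\sqrt{n-1}\,\partial_{yy}f-\tfrac{1}{\sqrt{n-1}}\tfrac{L_\theta f}{1-y^2}\bigr)^2$ and the second summand $2\alpha^2\Gamma^\theta\bigl(\tfrac{fy}{1-y^2}+\partial_y f\bigr)$ are nonnegative pointwise — a perfect square and a carr\'e du champ — so they contribute a nonnegative amount. Thus it remains to show that the integral of the third summand, $\tfrac{1}{(1-y^2)^2}\int_{\mathbb{S}^{d-1}_*}\bigl(\Gamma_2^\theta(f)-\alpha^2(n-2)\Gamma^\theta(f)-\tfrac{1}{n-1}(L_\theta f)^2\bigr)\,d\mu_{\mathbb{S}^{d-1}_*}$ is $\ge 0$.

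For this third term I would argue exactly as in the proof of Theorem \ref{thmintcd}, but with the constant weight in place of $f^{1-\nu}$; note that no positivity of $f$ is needed, since we never divide by a power of $f$. The $C^1$ bound on $L_\theta f(y,\cdot)$ lets one apply Proposition \ref{thmippsfmon} (with $\tilde f=L_\theta f$, $h=f$) to get $\int_{\mathbb{S}^{d-1}_*}\Gamma^\theta(f,L_\theta f)\,d\mu_{\mathbb{S}^{d-1}_*}=-\int_{\mathbb{S}^{d-1}_*}(L_\theta f)^2\,d\mu_{\mathbb{S}^{d-1}_*}$, while $\mathbf{1}\in C^\infty(\mathbb{S}^{d-1})$ and $\Gamma^\theta(f)\in C^2$ give $\int_{\mathbb{S}^{d-1}_*}L_\theta\Gamma^\theta(f)\,d\mu_{\mathbb{S}^{d-1}_*}=0$; together these yield \eqref{eqsL=G2}, i.e. $\int_{\mathbb{S}^{d-1}_*}\Gamma_2^\theta(f)\,d\mu_{\mathbb{S}^{d-1}_*}=\int_{\mathbb{S}^{d-1}_*}(L_\theta f)^2\,d\mu_{\mathbb{S}^{d-1}_*}$. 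Substituting, the third term becomes $\tfrac{1}{(1-y^2)^2}\cdot\tfrac{n-2}{n-1}\int_{\mathbb{S}^{d-1}_*}\bigl((L_\theta f)^2-\alpha^2(n-1)\Gamma^\theta(f)\bigr)\,d\mu_{\mathbb{S}^{d-1}_*}$.

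To close, integrate the $CD(D-2,D-1)$ inequality \eqref{eqcdsp} over $\mathbb{S}^{d-1}_*$ and insert \eqref{eqsL=G2} once more: since $D>2$ (because $a<a_c$), this produces the spectral-gap bound $\int_{\mathbb{S}^{d-1}_*}(L_\theta f)^2\,d\mu_{\mathbb{S}^{d-1}_*}\ge (D-1)\int_{\mathbb{S}^{d-1}_*}\Gamma^\theta(f)\,d\mu_{\mathbb{S}^{d-1}_*}$. By hypothesis $\alpha^2(n-1)\le D-1$, so $\int_{\mathbb{S}^{d-1}_*}(L_\theta f)^2\,d\mu_{\mathbb{S}^{d-1}_*}\ge \alpha^2(n-1)\int_{\mathbb{S}^{d-1}_*}\Gamma^\theta(f)\,d\mu_{\mathbb{S}^{d-1}_*}$, whence the third term is $\ge 0$. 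Adding the two nonnegative contributions from the square and the carr\'e du champ gives the claimed inequality; this is precisely a rerun of the chain \eqref{eq5.62} with the trivial weight.

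The step demanding the most care — and the reason hypotheses $(i)$–$(ii)$ are phrased as they are — is the integration by parts on $\mathbb{S}^{d-1}_*$: since the weight $\theta^A$ degenerates on $\partial\mathbb{S}^{d-1}_*$, the identity \eqref{eqsL=G2} is not automatic from $f\in C^3$ alone, and one genuinely needs $L_\theta f(y,\cdot)\in C^1$ up to that singular boundary — equivalently, by \eqref{eqLwarpd}, $L_Sf\in C^1(\overline{\R^d_*}\setminus\{0\})$ — in order to invoke Proposition \ref{thmippsfmon}. Everything else is routine, and because the weight here is constant, both the refinement \cite{GentZug} and the hypothesis $f>0$ used in Theorem \ref{thmintcd} are superfluous.
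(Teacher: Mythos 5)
Your proof is correct and follows essentially the same route as the paper: the paper simply invokes the chain \eqref{eq5.62} with the trivial weight and replaces \eqref{eq5.60} by \eqref{eq5.84}, while you substitute the integrated identity \eqref{eqsL=G2} directly into the third summand and then apply the spectral gap $\int_{\mathbb{S}^{d-1}_*}(L_\theta f)^2\,d\mu_{\mathbb{S}^{d-1}_*}\geq (D-1)\int_{\mathbb{S}^{d-1}_*}\Gamma^\theta(f)\,d\mu_{\mathbb{S}^{d-1}_*}$ --- an algebraically equivalent reorganization using the same two ingredients ($CD(D-2,D-1)$ on $\mathbb{S}^{d-1}_*$ and \eqref{eqsL=G2}) and arriving at the same final bound $\tfrac{n-2}{n-1}\bigl((D-1)-\alpha^2(n-1)\bigr)\int_{\mathbb{S}^{d-1}_*}\Gamma^\theta(f)\,d\mu_{\mathbb{S}^{d-1}_*}\geq 0$. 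Your side remark that the positivity of $f$ is superfluous here is also exactly the point the paper is implicitly making by dropping that hypothesis from the statement.
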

	\begin{proof}
		The proof is the same as Theorem \ref{thmintcd}'s. The only difference is that we employ 
		\begin{equation}\label{eq5.84}
			\int_{\mathbb{S}^{d-1}_*} \Gamma^\theta_2(f)d\mu_{\mathbb{S}^{d-1}_*}\geq (D-1)\int_{\mathbb{S}^{d-1}_*} \Gamma^\theta(f)d\mu_{\mathbb{S}^{d-1}_*}
		\end{equation}
		in place of \eqref{eq5.60}. Let us prove \eqref{eq5.84}. Integrating the $CD(D-2,D-1)$ condition on $\mathbb{S}^{d-1}_*$ and using \eqref{eqsL=G2} we end up with
		\begin{align*}
			\int_{\mathbb{S}^{d-1}_*} \Gamma^\theta_2(f)d\mu_{\mathbb{S}^{d-1}_*}&\geq (D-2)\int_{\mathbb{S}^{d-1}_*} \Gamma^\theta (f)  d\mu_{\mathbb{S}^{d-1}_*}+\frac{1}{D-1}\int_{\mathbb{S}^{d-1}_*}(L_\theta f)^2 d\mu_{\mathbb{S}^{d-1}_*}\\
			&=(D-2)\int _{\mathbb{S}^{d-1}_*}\Gamma^\theta (f)  d\mu_{\mathbb{S}^{d-1}_*}+\frac{1}{D-1}\int_{\mathbb{S}^{d-1}_*} \Gamma^\theta_2(f) d\mu_{\mathbb{S}^{d-1}_*}
		\end{align*}
		and so \eqref{eq5.84} is proved.
	\end{proof}
	\section{Proof of Theorem \ref{thm1.1}}\label{sec3}
	This section is devoted to proving Theorem \ref{thm1.1}. Let us make some remarks at first. Call $Z=\mu_S(S)$ and recall that due to $n$-conformal invariance the optimal constant $C_{opt}$ in \eqref{eqsobE} is the same as in \eqref{eqsobS}. Thanks to Proposition \ref{prop1h0} we can test \eqref{eqsobS} with $F=\mathbf{1}$ and deduce that necessarily
	\begin{equation}\label{eqboundCopt}
		C_{opt}\geq \frac{4}{\alpha^2 n (n-2)Z^{\frac{2}{n}}};
	\end{equation}
	this is one of the advantages of working under a finite measure.
	Therefore, if we prove that \eqref{eqsobS} is valid for a constant $C$ equal to  $\frac{4}{\alpha^2 n (n-2)Z^{\frac{2}{n}}}$, \eqref{eqboundCopt} tells us that the optimal constant, by definition, is then equal to $\frac{4}{\alpha^2 n (n-2)Z^{\frac{2}{n}}}=C_{opt}$.\\
	Define now the probability measure $\overline{\mu_S}:=\frac{\mu_S}{Z}$. 
	Observe that, by a classical argument, the compact embedding \eqref{eqrellich} and the fact that $\mathbf{1}\in H^1_0(S)$ imply the validity of a Poincar\'e's inequality on $S$
	\begin{equation}\label{eqpoincare}
		\int_S f^2 d\overline{\mu_S} -\left(\int_S f  d\overline{\mu_S}\right)^2\leq C_P\int _S\Gamma_S(f)d \overline{\mu_S}\qquad\forall f\in \mathcal{A}_0.
	\end{equation}
	It can be proved (see \cite{BakryGentilLedoux}) that Poincar\'e, Sobolev inequalities and $\mathbf{1}\in H^1_0(S)$ imply the validity of a tight Sobolev inequality, namely
	\begin{equation}\label{eqtightsob}
		\left(\int_S |f|^pd\overline{\mu_S}\right)^{2/p}\leq A_p\int_S \Gamma_S (f)d\overline{\mu_S}+\int_S |f|^2 d\overline{\mu_S}
	\end{equation}
	Thus, if we prove that \eqref{eqtightsob} is valid with $A_p=\frac{4}{\alpha^2 n(n-2)}$ we deduce that \eqref{eqsobS} is valid with $C=\frac{4}{\alpha^2 n(n-2)Z^{\frac{2}{n}}}$, so the thesis. Let us start the proof of \eqref{eqtightsob}.\\
	Fix $q<p$ and consider a constant $A_q\in \R$ to be determined. Consider the minimization problem 
	\[
	I(A_q)=\text{inf}\left\{A_q\int_S \Gamma_S(f)d\overline{\mu_S}+\int_S f^2 d\overline{\mu_S}\,:\,f\in H^1_0(S), \norm{f}_{L^q(S,d\overline{\mu_S})}=1\right\}
	\]
	If we take $\textbf{1}$ as a test function, we see that $I(A_q)\leq 1$. As a consequence, 
	\begin{align}\label{eqqsob}
		\left(\int_S |f|^q d\overline{\mu_S}\right)^{2/q}\leq A_q\int_S \Gamma_S(f)d\overline{\mu_S}+\int_S f^2 d\overline{\mu_S}
	\end{align}
	holds if and only if $I(A_q)=1$. Due to \eqref{eqrellich} and Banach-Alaoglu, there exists a minimizer $v$. Without loss of generality, since $|v|$ is also a minimizer, we assume $v\geq 0$ and, prior multiplying it by a constant, we have that $v$ is a weak solution to 
	\begin{equation}\label{eqeq}
		-A_qL_S v+v=v^{q-1}\text{ in }\R^d_*,
	\end{equation}
	namely that
	\begin{equation}\label{eqeqfaible}
		\int_S \left(A_q \Gamma_S(v,\psi)+v\psi\right)\, d\overline{\mu_S}= \int_S v^{q-1} \psi\, d\overline{\mu_S}\qquad\forall \psi \in H^1_0(S).
	\end{equation}
	Observe that local regularity results (see  \cite[Theorem 7.2, Corollary 7.3]{CoraFiorPagliaVita}) tell that $v\in C^3(\overline{\R^d_*}\setminus\{0\})$ and so that \eqref{eqeq} is valid pointwise in $\overline{\R^d_*}\setminus\{0\}$.
	Thanks to the validity of a Sobolev inequality, we can apply Moser's iteration scheme to deduce a global bound on the solution, i.e.
	\begin{equation}\label{eqmoser}
		v\leq c_2.
	\end{equation}
	Also, in paragraph \ref{applowerboundonthesol} we prove a lower bound on the solution, i.e.
	\begin{equation}\label{eqlbsol}
		0<c_1\leq v.
	\end{equation}
	To conclude, in the following lines we show that
	\begin{equation}\label{eqbcarre}
		\Gamma_S(v)\in L^2(S).
	\end{equation}
	In the variables $(y,\theta)$, \eqref{eqeq} is rewritten as 
	\begin{equation}\label{eq3.10}
		(1-y^2)\partial_{yy} v-ny \partial _y v+\frac{1}{\alpha^2(1-y^2)}L_\theta v=\frac{-v^{q-1}+v}{A_q\alpha^2}.
	\end{equation}
	We now want to get rid of $\partial _y v$. To do so, we define $w:=\frac{n}{2}\log(1-y^2)$ and set $h:=e^{\frac{w}{2}}v$. Then \eqref{eq3.10}  becomes 
	\begin{equation*}
		(1-y^2)\partial_{yy} h -\frac{n(n-4)}{4(1-y^2)}h+\frac{L_\theta h}{\alpha^2(1-y^2)}=h\left(\frac{1}{A_q\alpha^2}-\frac{n(n-2)}{4}\right)-\frac{e^\frac{w}{2}}{A_q \alpha^2}v^{q-1}=:R
	\end{equation*}
	and so
	\begin{equation}\label{eq3.12}
		\partial_{yy} h -\frac{n(n-4)}{4(1-y^2)^2}h+\frac{L_\theta h}{\alpha^2(1-y^2)^2}=\frac{R}{(1-y^2)}
	\end{equation}
	with $|R|\leq c(1-y^2)^\frac{n}{4}$ thanks to \eqref{eqmoser}. Consider now $y\in (-1,0]$ and perform a change of variables $t=1+y\in (0,1]$. \eqref{eq3.12} becomes
	\begin{align*}
		\partial_{tt}h-\frac{n(n-4)}{4t^2(2+t)^2}h+\frac{L_\theta h}{\alpha^2 t^2(2+t)^2}=\frac{R}{t(2+t)}=:\tilde{R}
	\end{align*}
	with $|\tilde{R}|\leq c t^{\frac{n-4}{4}}$. Fix $t\in (0,1)$ and take $z\in (1/2,2)$. Call $\tilde{h}(z,\theta):=h(tz, \theta)$ and observe that $\tilde{h}$ solves 
	\begin{equation}\label{eq3.11}
		\partial_{zz}\tilde{h}-\frac{n(n-4)}{4z^2(2+zt)^2}\tilde{h}+\frac{L_\theta\tilde h}{\alpha^2z^2(2+zt)^2}=\tilde{R}t^2=:S\qquad\text{in }(1/2,2)\times \mathbb{S}^{d-1}_*
	\end{equation}
	where $|S|\leq c t^\frac{n+4}{4}$ and $\tilde{h}\leq c t^\frac{n}{4}$. 
	Take $\psi=\psi(z,\theta)\in  C^\infty((1/2,2)\times \mathbb{S}^{d-1})$, multiply \eqref{eq3.11} by $\psi$, and integrate with respect to the measure $dz d\mu_{\mathbb{S}^{d-1}_*}$. Integrating by parts we obtain
	\begin{equation*}
		\int_{1/2}^2 \int_{{\mathbb{S}^{d-1}_*}} \left(\partial _z \tilde h\partial_z \psi+\frac{\Gamma^\theta(\tilde h, \psi)}{\alpha^2z^2(2+zt)^2}\right)dz d\mu_{\mathbb{S}^{d-1}_*}=\int_{1/2}^2 \int_{{\mathbb{S}^{d-1}_*}} \left(S  +\frac{n(n-4)}{4z^2(2+zt)^2}\tilde{h}\right)\psi dz d\mu_{\mathbb{S}^{d-1}_*}.
	\end{equation*}
	We can then use $C^{1,\alpha}$ estimates of \cite[Theorem 1.2]{CoraFiorPagliaVita} to deduce, due to compactness of the sphere, that 
	\begin{equation*}
		|\partial_z \tilde h|+|\nabla_\theta\tilde h|\leq c \left(\norm{\tilde h}_{L^\infty((1/2,2)\times \mathbb{S}^{d-1}_*)}+\norm{S}_{L^\infty((1/2,2)\times \mathbb{S}^{d-1}_*)}\right)\leq c t^\frac{n}{4}\quad\text{in }(3/4,3/2)\times \mathbb{S}^{d-1}_*
	\end{equation*} 
	where the constant $c$ is independent of $t$. Coming back to the variable $y$ we have that 
	\begin{equation*}
		|\partial_y h|\leq c (1+y)^\frac{n-4}{4}, \quad \text{for }y\in (-1,0).
	\end{equation*}
	and therefore
	\begin{align*}
		&\Gamma_S(h)\leq c\left((1+y)|\partial _y h|^2+(1+y)^{-1}\Gamma^\theta(h)\right)\leq c (1+y)^\frac{n-2}{2}\\
		&\Gamma_S(v)=\Gamma_S(he^{-\frac{w}{2}})\leq 2 h^2 \Gamma_S(e^{-\frac{w}{2}})+2e^{-w}\Gamma_S(h)\leq c (1+y)^{-1}.
	\end{align*}
	Eventually, 
	\begin{equation*}
		\int_{(-1,0)\times \mathbb{S}^{d-1}_*}\Gamma_S(v)^2d\mu_S\leq c\int_{(-1,0)\times \mathbb{S}^{d-1}_*}(1+y)^{\frac{n}{2}-3}dy<+\infty
	\end{equation*}
	where we use $n>4$. By symmetry, for $y\in (0,1]$, the reasoning is the same, allowing us to prove \eqref{eqbcarre}.\\
	Let us call $\Phi=v^{-\frac{q-2}{2}}$. Since $v$ solves \eqref{eqeq} we have that $\Phi$ solves 
	\begin{equation}\label{eq3.20}
		\Phi L_S \Phi-\frac{\nu}{2}\Gamma_S(\Phi)=-\lambda (\Phi^2-1).\qquad\text{in }\R^d_*
	\end{equation}
	with $\nu=\frac{2q}{q-2}$ and $\lambda =\frac{2}{(\nu-2)A_q}$.
	Thanks to the smoothness of $v$, \eqref{eqmoser}, \eqref{eqlbsol} and \eqref{eqbcarre} we deduce that
	\begin{equation}\label{eq3.21}
		L_S\Phi \in L^2(S)\cap C^3 (\overline{\R^d_*}\setminus\{0\}).
	\end{equation}
	We multiply \eqref{eq3.20} by $L_S(\Phi^{1-\nu})\zeta_k$, with $\zeta_k$ defined in Lemma \ref{lemA.2}, and we perform the same integration by parts as the proof of Theorem 1.7 in \cite{Zug} (which are rigurous thanks to \eqref{eq3.21}). We then get to 
	\begin{align*}
		\int_S \left(\Gamma_2^S(\Phi)-\frac{1}{\nu}(L_S\Phi)^2-\frac{c}{\nu}\Gamma_S(\Phi)\right)\Phi^{1-\nu}\zeta_k d\overline{\mu_S}=o(1)
	\end{align*}
	where $c=\frac{4(\nu-1)}{(\nu-2)A_q}$. Using Theorem \ref{thmg2int} below we can pass to the limit and end up with
	\begin{align*}
		\int_S \left(\Gamma_2^S(\Phi)-\frac{1}{\nu}(L_S\Phi)^2-\frac{c}{\nu}\Gamma_S(\Phi)\right)\Phi^{1-\nu}d\overline{\mu_S}=0.
	\end{align*} 
	Finally, employing Theorem \ref{thmintcd} we have
	\begin{align}\label{eqfinal}
		\left(\frac{1}{n}-\frac{1}{\nu}\right)\int_S(L_S\Phi)^2 \Phi^{1-\nu}d\overline{\mu_S}+\left(\alpha^2(n-1)-\frac{c}{\nu}\right)\int_S \Gamma_S(\Phi)\Phi^{1-\nu}d\overline{\mu_S}\leq 0.
	\end{align}
	If we set $A_q=\frac{4(\nu-1)}{\nu(\nu-2)\alpha^2(n-1)}$ then $\alpha^2(n-1)=\frac{c}{\nu}$. Moreover, $\left(\frac{1}{n}-\frac{1}{\nu}\right)>0$. But since \eqref{eqfinal} holds, we must have $L_S \Phi=0$. Multiplying $L_S \Phi=0$ by $\Phi$ and integrating by parts we get that $\Gamma_S(\Phi)=0$, so $\Phi$ is constant. Eventually, $v=1$, $I(\frac{4(\nu-1)}{\nu(\nu-2)\rho})=1$ and so \eqref{eqqsob} holds with $A_q=\frac{4(\nu-1)}{\nu(\nu-2)\rho}$. Passing to the limit in \eqref{eqqsob} as $q\rightarrow p$, we deduce that \eqref{eqtightsob} is valid for $A_p=\frac{4}{\alpha^2 n(n-2)}$, the thesis. 
	\\
	\
	\\
	It remains to characterize the optimizers.\\
	Suppose that $v$ is a positive optimizer for \eqref{eqtightsob} with $A_p=\frac{4}{\alpha^2 n(n-2)}$, such that $\norm{v}_{L^p(S,d\overline{\mu_S})}=1$. Namely,
	\begin{equation}\label{eqtightsob1}
		\left(\int_S |v|^pd\overline{\mu_S}\right)^{2/p}=\frac{4}{\alpha^2 n(n-2)} \int_S \Gamma_S (v)d\overline{\mu_S}+\int_S |v|^2 d\overline{\mu_S}
	\end{equation}
	Then, 
	\[
	v\in \text{argmin}\left\{\frac{4}{\alpha^2 n(n-2)}\int_S \Gamma_S(f)d\overline{\mu_S}+\int_S f^2 d\overline{\mu_S}\,:\,f\in H^1_0(S), \norm{f}_{L^p(S,d\overline{\mu_S})}=1\right\}
	\]
	and so, up to a multiplicative constant, $v$ is solution to 
	\[
	-\frac{4}{\alpha^2 n(n-2)}L_S v+v=v^{p-1}
	\]
	Defining again $\Phi=v^{-\frac{p-2}{2}}$ and integrating by parts as in the proof of Theorem 1.7 in \cite{Zug}, we deduce that 
	\begin{align}\label{eq3.15}
		\int_S \left(\Gamma_2^S(\Phi)-\frac{1}{n}(L_S\Phi)^2-\alpha^2(n-1)\Gamma_S(\Phi)\right)\Phi^{1-n} d\overline{\mu_S}=0.
	\end{align}
	If $\alpha^2<\frac{D-1}{n-1}$ or $\alpha^2=\frac{D-1}{n-1}<1$, then \eqref{eq3.15} respectively coupled with \eqref{eq5.62} and \eqref{eq5.62b} tells us that $\Gamma^\theta(\Phi)=0$. So, $\Phi$ is a radial function. Multiplying \eqref{eq5.59} by $\Phi^{1-n}$ and integrating in $d\mu_S$, using \eqref{eq3.15} and the radial symmetry of $\Phi$, we deduce that $\Phi$ solves the following equation
	\begin{equation*}
		\partial_{yy}\Phi=0
	\end{equation*}
	So, $\Phi=C+By$ with $C>|B|$ since we have that $v$ is positive and detached from zero.\\
	Let us now verify the inverse implication, namely that every $v$ such that $\Phi=v^{-\frac{p-2}{2}}=C+By$ with $C>|B|$ is an optimizer. Suppose that $\norm{v}_{L^p(S,d\overline{\mu_S})}=1$. This implies that $C^2-B^2=1$. Indeed, calling $N=\frac{C+B}{C-B}>0$,
	\begin{align*}
		\int_S v^p d\mu_S&=\int_S \Phi^{-n}d\mu_S=\int_S \left(\frac{|x|^{2\alpha}(C+B)+C-B}{|x|^{2\alpha}+1}\right)^{-n}\left(\frac{2}{1+|x|^{2\alpha}}\right)^n |x|^{-bp}x^A dx\\
		&=\int_S \left(\frac{2}{|x|^{2\alpha}(C+B)+C-B}\right)^n|x|^{-bp}x^A dx\\&=(C-B)^{-n}\int_S\left(\frac{2}{1+|N^\frac{1}{2\alpha}x|^{2\alpha}}\right)^n|x|^{-bp}x^A dx\\
		&=(C-B)^{-n} N^\frac{+bp-D}{2\alpha}\int_S \left(\frac{2}{1+|w|^{2\alpha}}\right)^n |w|^{-bp}w^A dw = (C-B)^{-n} \left(\frac{C+B}{C-B}\right)^{-\frac{n}{2}}Z\\
		&=(C^2-B^2)^{-\frac{n}{2}} Z.
	\end{align*}
	So, $\norm{v}_{L^p(S,d\overline{\mu_S})}=1$ if and only if $C^2-B^2=1$. Let us now evaluate
	\begin{align}\label{eq3.28}
		\begin{split}
			\Phi L_S \Phi-\frac{n}{2}\Gamma_S(\Phi)&=(-\alpha^2 ny B)(C+y B)-\frac{n}{2}\alpha^2 (1-y^2)B^2=\frac{n\alpha^2}{2}(C^2-B^2+(C+By)^2)\\&=\frac{n\alpha^2}{2}(1-\Phi^2)
		\end{split}
	\end{align}
	which, since $v=\Phi^{-\frac{n-2}{2}}$, implies in turn that 
	\begin{equation}\label{eq3.29}
		-\frac{4}{n(n-2)\alpha^2}L_S v+v=v^{p-1}.
	\end{equation}
	Multiplying \eqref{eq3.29} by $v$, integrating in $d\overline{\mu_S}$ and integrating by parts, (recalling that $\norm{v}_{L^p(S,d\overline{\mu_S})}=1$) we see that $v=(C+By)^{-\frac{n-2}{2}}$ is optimizer for \eqref{eqtightsob1}, the thesis.\\
	To conclude, let us evaluate optimizers $f$ for \eqref{eqmonckn}. From Proposition \ref{sobinvariance}, they are all equal to 
	\begin{align*}
		f=(C+By)^{-\frac{n-2}{2}}\left(\frac{2}{1+|x|^{2\alpha}}\right)^\frac{n-2}{2}=\left(\frac{2}{C-B+|x|^{2\alpha}(C+B)}\right)^\frac{n-2}{2}
	\end{align*}
	and since $C>|B|$, they are all of the form \eqref{eqopt}.
	\subsection{Computation of $C_{opt}$}
	Let us conclude evaluating the constant $Z=\mu_S(\R^d_*)$ in order to entirely explicit $C_{opt}$. Call $u(|x|):=\left(\frac{2}{1+|x|^{2\alpha}}\right)^{n}|x|^{-bp}$. We have then
	\begin{align*}
		Z=\int_{{\mathbb{\R}^{d}_*}} u(|x|)x^A dx=\int_0^{+\infty}u(r)\left(\int_{\partial B_r^*}x^A d\mathcal{H}^{d-1}\right)dr=\left(\int_{\partial B_1^*}x^A d\mathcal{H}^{d-1}\right)\left(\int_0^{+\infty}r^{D-1}u(r) dr \right).
	\end{align*} 
	From \cite[Page 4330-4331]{Ca} (see also \cite{Abram}) we have that
	\begin{equation*}
		\int_{\partial B_1^*}x^A d\mathcal{H}^{d-1}=D\frac{\Gamma(\frac{A_1+1}{2})\cdot...\cdot \Gamma(\frac{A_d+1}{2})}{2^k \Gamma(1+\frac{D}{2})},
	\end{equation*}
	and eventually 
	\begin{align*}
		\int_0^{+\infty}r^{D-1}u(r) dr=\int_0^{+\infty}\left(\frac{r^\alpha+r^{-\alpha}}{2}\right)^{-n}\frac{1}{r}dr=\int_{\R}\cosh(\alpha u)^{-n}du.
	\end{align*}
Moreover, performing some change of varibles,
\begin{align*}
\int_{\R}\cosh(\alpha u)^{-n}du&=\frac{2^n}{\alpha}\int_{\R}\frac{e^{(n-1)u}}{(e^{2u}+1)^n}e^u du=\frac{2^n}{\alpha}\int_0^{+\infty}\frac{w^{n-1}}{(w^2+1)^n}dw=\frac{2^{n-1}}{\alpha}\int_0^{+\infty}\frac{s^{\frac{n}{2}-1}}{(s+1)^n}ds\\&=\frac{2^{n-1}}{\alpha}B\left(\frac{n}{2},\frac{n}{2}\right)=\frac{2^{n-1}}{\alpha}\frac{\left(\Gamma\left(\frac{n}{2}\right)\right)^2}{\Gamma(n)}=\frac{\sqrt{\pi}}{\alpha}\frac{\Gamma\left(\frac{n}{2}\right)}{\Gamma\left(\frac{n+1}{2}\right)}.
\end{align*}
where we used the definition and properties of the Beta function $B(z,w)$ (see \cite[6.2.1]{Abram}) and the Legendre duplication formula\footnote{see https://en.wikipedia.org/wiki/Gamma\_function}.\\
	Putting everything together we obtain
	\begin{align*}
		Z=\frac{ D\sqrt{\pi}}{\alpha}\frac{\Gamma(\frac{A_1+1}{2})\cdot...\cdot \Gamma(\frac{A_d+1}{2})}{2^k \Gamma(1+\frac{D}{2})} \frac{\Gamma\left(\frac{n}{2}\right)}{\Gamma\left(\frac{n+1}{2}\right)},
	\end{align*}
	the thesis.
	\section{Appendix}\label{sec4}
	This appendix is mainly dedicated to regularity results of functions involved in the paper and the validity of the monomial CKN inequality with some costant. 
	Let us begin proving an integrability condition of $\Gamma_2^S(f)$ under some hypothesis on the function $f$ and the integrated curvature dimension condition. 
	\begin{theorem}\label{thmg2int}
		Assume $n>4$ and \eqref{hpalphaint}. If $f\in C^3(\overline{\R^d_*}\setminus\{0\})\cap L^\infty(\R^d_*)$ and $L_S f\in C^1(\overline{\R^d_*}\setminus\{0\})\cap L^2(S)$, $ \Gamma_S(f)\in L^2(S)$, then
		\begin{equation*}
			\int_I\left|\int_{{\mathbb{S}^{d-1}_*}}\Gamma_2^S(f)d\mu_{\mathbb{S}^{d-1}_*} \right|(1-y^2)^{\frac{n}{2}-1}dy<+\infty
		\end{equation*}
	\end{theorem}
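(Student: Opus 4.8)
The plan is to establish the sharper bound
\[
\int_I\Bigl|\int_{\mathbb{S}^{d-1}_*}\Gamma_2^S(f)\,d\mu_{\mathbb{S}^{d-1}_*}\Bigr|(1-y^2)^{\frac n2-1}\,dy\ \le\ \alpha\,\|L_Sf\|_{L^2(S)}^2<+\infty .
\]
First I would remark that $f$ fulfils the assumptions of Theorem \ref{thmpositquuadr}, so that for every fixed $y\in I$
\[
\int_{\mathbb{S}^{d-1}_*}\Gamma_2^S(f)\,d\mu_{\mathbb{S}^{d-1}_*}=g(y)+\alpha^2(n-1)\!\int_{\mathbb{S}^{d-1}_*}\!\Gamma_S(f)\,d\mu_{\mathbb{S}^{d-1}_*}+\frac1n\!\int_{\mathbb{S}^{d-1}_*}\!(L_Sf)^2\,d\mu_{\mathbb{S}^{d-1}_*}\ \ge\ 0,
\]
where $g(y):=\int_{\mathbb{S}^{d-1}_*}\bigl(\Gamma_2^S(f)-\alpha^2(n-1)\Gamma_S(f)-\tfrac1n(L_Sf)^2\bigr)d\mu_{\mathbb{S}^{d-1}_*}\ge0$ by Theorem \ref{thmpositquuadr}; all three slice integrals are finite because $f\in C^3$, $L_Sf\in C^1$, and $\{y\}\times\overline{\mathbb{S}^{d-1}_*}$ is a compact subset of $\overline{\R^d_*}\setminus\{0\}$. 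Consequently the absolute value in the statement is superfluous, and it suffices to bound the iterated integral $\int_I\bigl(\int_{\mathbb{S}^{d-1}_*}\Gamma_2^S(f)\,d\mu_{\mathbb{S}^{d-1}_*}\bigr)(1-y^2)^{\frac n2-1}\,dy$.

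Next I would bring in the radial cut-offs $\zeta_k\in\mathcal{A}_0$ of Proposition \ref{prop1h0}, which satisfy $0\le\zeta_k\le1$, $\zeta_k\to\mathbf 1$ pointwise, $\|L_S\zeta_k\|_{L^2(S)}\to0$, and, since $n>4$, also $\|\Gamma_S(\zeta_k)\|_{L^2(S)}\to0$ (as in the proof of Proposition \ref{lemA.2}). Since $\Gamma_2^S(f)$ is continuous on $\overline{\R^d_*}\setminus\{0\}$ and $\zeta_k$ is compactly supported, $\int_S\Gamma_2^S(f)\,\zeta_k\,d\mu_S$ is a well-defined real number. Writing $\Gamma_2^S(f)=L_S\tfrac{\Gamma_S(f)}2-\Gamma_S(f,L_Sf)$ and integrating by parts repeatedly against $\zeta_k$ (all such integrations by parts being licit by the formulas of Section \ref{sec2}, because $\zeta_k\in\mathcal{A}_0$, $\Gamma_S(f)\in C^2$, $L_Sf\in C^1$, and the boundary $\partial\R^d_*$ carries the vanishing weight), I expect to reach the identity
\[
\int_S\Gamma_2^S(f)\,\zeta_k\,d\mu_S=\int_S(L_Sf)^2\,\zeta_k\,d\mu_S+\tfrac12\int_S\Gamma_S(f)\,L_S\zeta_k\,d\mu_S+\int_S (L_Sf)\,\Gamma_S(\zeta_k,f)\,d\mu_S .
\]
By Cauchy--Schwarz the second term is $\le\tfrac12\|\Gamma_S(f)\|_{L^2(S)}\|L_S\zeta_k\|_{L^2(S)}$ and the third is $\le\|L_Sf\|_{L^2(S)}\|\Gamma_S(f)\|_{L^2(S)}^{1/2}\|\Gamma_S(\zeta_k)\|_{L^2(S)}^{1/2}$, hence both vanish as $k\to\infty$; meanwhile $\int_S(L_Sf)^2\zeta_k\,d\mu_S\to\|L_Sf\|_{L^2(S)}^2$ by dominated convergence, since $(L_Sf)^2\in L^1(S)$. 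Therefore $\int_S\Gamma_2^S(f)\zeta_k\,d\mu_S\to\|L_Sf\|_{L^2(S)}^2$. (One checks this against Lemma \ref{corgam2}: if $f$ and $L_Sf$ are compactly supported then the two remainder terms are already $0$ for $k$ large.)

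To conclude I would invoke Fatou's lemma: the function $y\mapsto\zeta_k(y)\bigl(\int_{\mathbb{S}^{d-1}_*}\Gamma_2^S(f)\,d\mu_{\mathbb{S}^{d-1}_*}\bigr)(1-y^2)^{\frac n2-1}$ is nonnegative by the first step and converges pointwise, as $k\to\infty$, to $\bigl(\int_{\mathbb{S}^{d-1}_*}\Gamma_2^S(f)\,d\mu_{\mathbb{S}^{d-1}_*}\bigr)(1-y^2)^{\frac n2-1}$, so, using $d\mu_S=\tfrac{(1-y^2)^{n/2-1}}{\alpha}\,dy\,d\mu_{\mathbb{S}^{d-1}_*}$ and that $\zeta_k$ is radial,
\[
\int_I\Bigl(\int_{\mathbb{S}^{d-1}_*}\Gamma_2^S(f)\,d\mu_{\mathbb{S}^{d-1}_*}\Bigr)(1-y^2)^{\frac n2-1}\,dy\ \le\ \liminf_{k\to\infty}\ \alpha\int_S\Gamma_2^S(f)\,\zeta_k\,d\mu_S\ =\ \alpha\,\|L_Sf\|_{L^2(S)}^2 ,
\]
which, together with the first step, gives the claim. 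The main obstacle is the integration-by-parts identity of the middle step: expanding $-\int_S\zeta_k\Gamma_S(f,L_Sf)\,d\mu_S$ by the Leibniz rule for $\Gamma_S$ and one integration by parts produces, besides harmless pieces, the term $\int_S f\,\Gamma_S(\zeta_k,L_Sf)\,d\mu_S$, whose crude control would demand $\Gamma_S(L_Sf)\in L^1(S)$ — not among our hypotheses. The delicate point is that this term must be integrated by parts once more, transferring the derivative off $L_Sf$ onto the smooth compactly supported vector field $f\,\nabla_{g_S}\zeta_k$ (legitimate since $f\in C^3$), after which every contribution requiring control of $\Gamma_S(L_Sf)$ cancels and only the two remainders above survive.
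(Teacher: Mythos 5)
Your argument is correct, and it takes a genuinely different route from the paper's. The paper treats $\gamma_2(h)(y):=\int_{\mathbb{S}^{d-1}_*}\Gamma_2^S(h)\,d\mu_{\mathbb{S}^{d-1}_*}$ as a positive quadratic form in $h$ (positivity coming from Theorem~\ref{thmpositquuadr}), invokes the elementary square-root estimate $\bigl|\gamma_2(f_k)^{1/2}-\gamma_2(f_l)^{1/2}\bigr|^2\le\gamma_2(f_k-f_l)$, integrates this over $I$, and uses Lemma~\ref{corgam2} together with $f_k\to f$ in $D(L_S)$ (Proposition~\ref{lemA.2}) to conclude that $\gamma_2(f_k)^{1/2}$ is Cauchy in $L^2(I,(1-y^2)^{n/2-1}dy)$; pointwise identification on the sets $\{\zeta_k\equiv1\}$ then gives $\gamma_2(f)^{1/2}\in L^2$, hence the integrability claim. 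You instead use the positivity of $\gamma_2(f)$ to drop the absolute value, derive by integration by parts the explicit relaxed version
\[
\int_S\Gamma_2^S(f)\,\zeta_k\,d\mu_S=\int_S(L_Sf)^2\zeta_k\,d\mu_S+\tfrac12\int_S\Gamma_S(f)\,L_S\zeta_k\,d\mu_S+\int_S(L_Sf)\,\Gamma_S(\zeta_k,f)\,d\mu_S
\]
of Lemma~\ref{corgam2}, send the remainders to zero (this is where $n>4$, Proposition~\ref{prop1h0}, and $\|\Gamma_S(\zeta_k)\|_{L^2}\to0$ enter, exactly as in Proposition~\ref{lemA.2}), and finish with Fatou. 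Both arguments share the same building blocks ($\zeta_k$, Theorem~\ref{thmpositquuadr}, $n>4$) and both yield the quantitative bound $\int_I\gamma_2(f)(1-y^2)^{n/2-1}dy\le\alpha\|L_Sf\|_{L^2(S)}^2$ (the paper in fact gets equality since $\gamma_2(f_k)^{1/2}\to\gamma_2(f)^{1/2}$ in $L^2$). The paper's Cauchy-sequence argument is shorter because it never needs the explicit cutoff identity; your version is more hands-on, makes the error terms and their estimates visible, and cleanly isolates the one subtle point — that the dangerous term $\int_S f\,\Gamma_S(\zeta_k,L_Sf)\,d\mu_S$ must be integrated by parts once more so that no control of $\Gamma_S(L_Sf)$ is needed (equivalently, one should write $\zeta_k\Gamma_S(f,L_Sf)=\Gamma_S(f,\zeta_kL_Sf)-L_Sf\,\Gamma_S(f,\zeta_k)$ and integrate by parts on the first term directly, so the problematic term never appears).

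One small remark: the integration by parts you perform uses $h=\zeta_kL_Sf$, which is only $C^1_c(\overline{\R^d_*}\setminus\{0\})$, not in $\mathcal{A}_0=C^\infty_c$. This is harmless — the proof of Proposition~\ref{thmipptrue} only requires $h\in C^1$ with compact support away from $\{0\}$ and $f\in C^2$, as in the Euclidean Example preceding it — but it is worth flagging, since the paper states the integration by parts formula for $h\in\mathcal{A}_0$.
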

	\begin{proof}
		As defined in Proposition \ref{lemA.2}, we approximate $f$ by a sequence of functions $f\zeta_k=f_k\in \mathcal{A}_0$ such that $f\zeta_k=f_k\rightarrow f$ in $D(L_S)$ and $f=f_k$ in $(-1+2/k,1-2/k)\times \mathbb{S}^{d-1}_*$. Let us remark that, since $\zeta_k$ is radial, $f_k$ satisfies hypotheses of Theorem \ref{corgam2}. If we set $\gamma_2(h)(y):=\int_{{\mathbb{S}^{d-1}_*}}\Gamma_2^S(h)d\mu_{\mathbb{S}^{d-1}_*}$ for $h\in C^3(\overline{\R^d_*}\setminus\{0\}),\,L_S h \in C^1(\overline{\R^d_*}\setminus\{0\})$, then Theorem \ref{thmpositquuadr} tells us that $\gamma_2$ is a positive quadratic form. Therefore, Cauchy-Schwarz inequality holds
		\begin{equation}\label{eq3.16}
			|\gamma_2(f_k)^{1/2}-\gamma_2(f_l)^{1/2}|^2\leq \gamma_2(f_k-f_l).
		\end{equation}
		Moreover, integrating \eqref{eq3.16} with respect to $(1-y^2)^{\frac{n}{2}-1}dy$ and using Lemma \ref{corgam2}, we deduce that $\gamma_2(f_k)$ is a Cauchy sequence in $L^2(I,(1-y^2)^{\frac{n}{2}-1}dy)$. The thesis follows.
	\end{proof}
	The following part is devoted to proving \eqref{eqmonckn} with a costant which need not be optimal. It is done employing \eqref{eqcabros} (proved in \cite{Ca}) and showing that it is $n$-conformal to a simpler model. The conclusion follows using Proposition \ref{sobinvariance} and an interpolation inequality.
	\begin{proposition}
		If $0\leq b-a\leq 1$ and $a\neq \frac{D-2}{2}$, \eqref{eqmonckn} holds true. 
	\end{proposition}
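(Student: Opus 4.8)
The plan is to reduce the monomial CKN inequality \eqref{eqmonckn} to the monomial Sobolev inequality \eqref{eqcabros} of Cabré and Ros-Oton via an $n$-conformal change of the weighted manifold, exactly as the rest of the paper sets things up, and then to close the gap between the borderline exponent of Sobolev and the subcritical exponent $p$ with an interpolation/Hölder argument. First I would recall that \eqref{eqcabros} is precisely a Sobolev inequality on the weighted manifold $(\R^d_*, g_{\R^d}, x^A dx)$, with critical exponent $2^*=\tfrac{2D}{D-2}$ and with the drift generator $Lf=\Delta f+\Gamma(\log x^A,f)$. This is the model $E$ of \eqref{defnE} in the special case $\alpha=1$, $a=b$: there $g_E=g_{\R^d}$ and $d\mu_E=x^A dx$. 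So \eqref{eqcabros} reads
\begin{equation*}
\left(\int_{\R^d_*}|g|^{2^*}x^A dx\right)^{2/2^*}\le C_{\mathrm{CR}}\int_{\R^d_*}|\nabla g|^2 x^A dx\qquad\forall g\in\mathcal A_0,
\end{equation*}
which after the same computation as in the proof of Proposition \ref{sobinvariance} (with conformal factor $\psi$ chosen so that $\psi^{2}g_{\R^d}=g_E$ for the \emph{general} $\alpha$) transfers to a Sobolev inequality on $(E,g_E,d\mu_E)$ with exponent $2^\ast$ replaced by $\tfrac{2D}{D-2}$ — note the subtlety that the $n$-conformal machinery changes $p=\tfrac{2n}{n-2}$, so one must be careful about which dimension plays the role of $n$. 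The cleanest route, and the one I would follow, is: apply the $\alpha$-scaling $x\mapsto x/|x|^{1-\alpha}\cdot(\dots)$ that sends $g_{\R^d}$ to $g_E$, so that \eqref{eqcabros} becomes a Sobolev inequality on $E$ with its \emph{own} monomial dimension; since on $E$ the Sobolev exponent attached to $d\mu_E$ is $2^\ast_E=\tfrac{2D}{D-2}$ but the ``analytic dimension'' for scaling is $n$, one in fact obtains, after using Proposition \ref{sobinvariance} to pass from $E$ to $S$, a Sobolev inequality on $S$ with exponent $p=\tfrac{2n}{n-2}$ — but only for the range $b-a=0$ for free, and for $0<b-a\le 1$ after interpolation.

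Concretely, the key steps in order: (1) Verify that \eqref{eqcabros} is the Sobolev inequality on the weighted manifold $(\R^d_*,g_{\R^d},x^Adx)$ and write it in $\Gamma$-calculus form. (2) Perform the conformal change of metric/measure taking this to the manifold $E$ of \eqref{defnE} (the conformal factor is a power of $|x|$; Yamabe's equation from Proposition \ref{sobinvariance} produces the potential term, which for the pure scaling is either zero or an explicit constant, exactly as $-L_Eu=\tfrac{\alpha^2n(n-2)}{4}u^{p-1}$ was computed in the main text). (3) Use Proposition \ref{sobinvariance} once more, with $\psi=\varphi^{-1}$, to land on $S$, obtaining a tight Sobolev-type inequality $\left(\int_S|F|^{p_0}d\mu_S\right)^{2/p_0}\le C(\int_S\Gamma_S(F)d\mu_S+c\int_SF^2d\mu_S)$ for the endpoint exponent $p_0$ corresponding to $b=a$. (4) For $0<b-a\le1$, interpolate: the exponent $p=\tfrac{2D}{D-2+2(b-a)}$ lies between $2$ and $p_0$, and since $\mu_S(S)=Z<\infty$, Hölder's inequality interpolates $L^p$ between $L^2$ and $L^{p_0}$ with finite constant, yielding \eqref{eqsobE} (hence \eqref{eqmonckn} by $n$-conformal equivalence) with \emph{some} constant. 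Alternatively, and perhaps more transparently, one can interpolate directly at the level of \eqref{eqmonckn} on $\R^d$ by combining \eqref{eqcabros} with the monomial Hardy inequality $\int_{\R^d_*}\tfrac{|f|^2}{|x|^2}x^Adx\le C\int_{\R^d_*}|\nabla f|^2 x^A dx$ (valid for $D>2$ by the same weighted Hardy argument), since the weight $|x|^{-bp}$ with $bp=\tfrac{2D(b-a)}{D-2+2(b-a)}$ interpolates between $|x|^0$ (Sobolev) and a Hardy-type weight, and a Hölder inequality in the exponent splits $\int\tfrac{|f|^p}{|x|^{bp}}x^A$ accordingly.

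The main obstacle I anticipate is \textbf{bookkeeping of the three dimensions $d\le D\le n$ under the conformal change}: the $n$-conformal invariance of Proposition \ref{sobinvariance} is stated for a \emph{fixed} $n=\tfrac{2p}{p-2}$, while \eqref{eqcabros} naturally carries the exponent $2^\ast=\tfrac{2D}{D-2}$ attached to $D$, not to $n$. One must therefore not apply Proposition \ref{sobinvariance} naively to \eqref{eqcabros} with the wrong $n$; instead the correct order is to \emph{first} do the pure dilation/conformal change that introduces $\alpha$ (this is what turns $D$ into $n$ on the analytic side while keeping $d\mu_E$ with its own critical exponent), and only then invoke Proposition \ref{sobinvariance} with $\psi=\varphi^{-1}$. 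Getting this sequencing right — and checking that the potential term produced by Yamabe's equation at the intermediate step is a genuine nonnegative constant (or vanishes), so that the final inequality is of the claimed tight form — is the delicate point; everything else is a routine Hölder interpolation using $\mu_S(S)=Z<\infty$. A secondary, purely technical point is density: \eqref{eqcabros} is stated for $C^\infty_c(\R^d)$ while we want it for $\mathcal A_0=C^\infty_c(\overline{\R^d_*}\setminus\{0\})$, which is handled by the approximation lemmas from \cite{CoraFiorPagliaVita} already cited in the paper, noting that the singularity at the origin is removable because we are away from the critical exponent.
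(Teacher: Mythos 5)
Your guiding idea — push \eqref{eqcabros} forward by a conformal change and close the exponent gap by H\"older — is the right circle of ideas, and you have correctly identified the main obstacle (the mismatch between the conformal dimensions $D$ and $n$). But the route you sketch through $S$ does not close. The $D$-conformal change taking $(\R^d_*,g_{\R^d},x^A\,dx)$ to the metric $g_S=\varphi^{-2}|x|^{2(\alpha-1)}g_{\R^d}$ pushes the measure to $\psi_1^D\,x^A\,dx$ with $\psi_1=\varphi^{-1}|x|^{\alpha-1}$, and this is never $d\mu_S=\varphi^{-n}|x|^{\alpha n-D}x^A\,dx$ unless $D=n$ (i.e.\ $b=a$); the ratio is $\bigl(\tfrac{1+|x|^{2\alpha}}{2|x|^{\alpha}}\bigr)^{n-D}$, bounded below by $1$ but unbounded as $|x|\to0$ or $|x|\to\infty$, so the two measures are not comparable. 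Consequently you never obtain an $L^{2^\ast}$ Sobolev inequality for $d\mu_S$, and the "routine H\"older interpolation using $\mu_S(S)<\infty$" in your step (4) has no starting point. Your phrase "first do the pure dilation/conformal change that introduces $\alpha$" is where the gap sits: any change of metric introducing $\alpha$ is a $D$-conformal change of the Cabr\'e--Ros-Oton model, and hence carries the measure to $\psi^D x^A\,dx$, not to $\psi^n x^A\,dx$; there is no sequencing that turns $D$ into $n$ by conformality alone.

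The paper resolves this by using the cylinder $C=\R\times\mathbb{S}^{d-1}_*$, with metric $ds^2+g_{\mathbb{S}^{d-1}}$ and measure $ds\,d\mu_{\mathbb{S}^{d-1}_*}$, as the \emph{common} conformal image: $E$ is $n$-conformal to $C$ (giving \eqref{sobcilinder}), while $(\R^d_*,g_{\R^d},x^A\,dx)$ is $D$-conformal to $C$ (giving the identity \eqref{eq6.1}). Both inequalities then live on the same measure space $C$, differing only in the exponent and the coefficient of the $\int_C F^2$ term, and the remaining gap is closed not by H\"older on $S$ but by H\"older in the original $x$-variable applied to $\int_{\R^d_*}|f|^p\psi^r\,d\mu$ with $\psi$ the $D$-conformal factor and $r=D+\tfrac{p}{2}(2-D)$. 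The step that makes this work is the algebraic identity $\tfrac{2r}{\theta p}=2$, where $\tfrac1p=\tfrac\theta2+\tfrac{1-\theta}{2^\ast}$; this is precisely what forces the H\"older factors to reproduce $\int f^2\psi^2\,d\mu=\int_C F^2\,d\tilde\mu$ and $\int|f|^{2^\ast}\,d\mu$. This identity is exactly the "delicate point" you flagged but did not verify; without it the interpolation weights do not cancel. Your alternative suggestion of interpolating \eqref{eqcabros} directly against a monomial Hardy inequality in the $x$-variable is a genuinely different and plausible route, closer in spirit to the classical CKN derivation, but it requires an analogous exponent identity to match the target weight $|x|^{-bp}x^A$, which you did not carry out.
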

	\begin{proof}
		First, let us observe that $E$ is $n$-conformal to the cylinder $C:=\R\times \mathbb{S}^{d-1}_*$. 		
		Indeed, calling $|x|=r=e^s$,
		\begin{align}\label{eq4.1}
			\begin{split}
				&g_E = r^{2(\alpha-1)}g_{\R^d}= r^{2(\alpha-1)}( dr^2+r^2 g_{\mathbb{S}^{d-1}})=e^{2\alpha s}(ds^2+g_{\mathbb{S}^{d-1}})= e^{2\alpha s} g_C\\
				&d\mu_E=e^{n\alpha  s}dsd\mu_{\mathbb{S}^{d-1}_*}=e^{n\alpha s} d\mu_C
			\end{split}
		\end{align}
		Then, \eqref{eqmonckn} is equivalent to 
		\begin{equation}\label{sobcilinder}
			\left(\int_C |f|^p d\mu_C\right)^{2/p}\leq c\left(\int_C \left(\left(\frac{\partial f}{\partial s}\right)^2+|\nabla _\theta f|^2\right)d\mu_C+\frac{\alpha^2(n-2)^2}{4}\int_C f^2d\mu_C\right).
		\end{equation}
		Indeed, calling $u=e^{\alpha s\frac{n-2}{2}}$ and $L_C=\frac{\partial^2}{\partial s^2}+L_\theta$, we have
		\[
		-L_C u+\frac{\alpha^2(n-2)^2}{4} u=0
		\]
		and we can apply Proposition \ref{sobinvariance}.\\
		Now, call $g=g_{\mathbb{\R}^d}$, $d\mu=x^A dx$.
		From \cite{Ca} we know that 
		\begin{equation}\label{eqrosmon}
			\left(\int_{\R^d_*} |f|^q d\mu\right)^{2/q}\leq c \left(\int_{\R^d_*}|\nabla f|^2 d\mu\right)\qquad\forall f\in C_c^\infty(\R^d)\qquad q=\frac{2D}{D-2}.
		\end{equation}
		Fix $f\in C^\infty_c(\R^d\setminus\{0\})$. Call $u=\psi^{\frac{D-2}{2}}$ and $f=uF$, so $F\in C^\infty_c(\R^d\setminus\{0\})$. Observe that the model $(\R^d_*, g, d\mu)$ is trivially equal to $E$ if $a=b=0$ and so it is $D$-conformal to $C$ with $\alpha=1$, by \eqref{eq4.1}. Therefore, we can perform the computations of Proposition \ref{sobinvariance} and use integration by parts on $\R^d_*$ to get
		\begin{equation}\label{eq6.1}
			\int_C \left(\frac{\partial F}{\partial s}\right)^2+|\nabla_\theta F|^2d \tilde \mu+\frac{(D-2)^2}{4}\int_C F^2 d\tilde{\mu}=\int_{\R^d_*} |\nabla f|^2 d\mu
		\end{equation}
		since $-(\Delta u-\nabla \log x^A\nabla u)u^{1-p}=\frac{(D-2)^2}{4}$.\\
		Observe that 
		\begin{align}\label{eq6.3}
			\int_C F^2 d\tilde{\mu}=\int_{\R^d_*} f^2 \psi^2 d\mu
		\end{align}
		\begin{align}\label{eq6.4}
			\int_C |F|^pd\tilde \mu =\int_{\R^d_*} f^p u^{-p}\psi^Dd\mu = \int_{\R^d_*} |f|^p \psi ^r d\mu \qquad\text{with } r=D+\frac{p}{2}(2-D).
		\end{align}
		Now, we need to adjust the Sobolev exponents in order to prove the thesis, since $p< q$. For this reason we interpolate $p$ between $2$ and $q$. Let $\theta\in (0,1)$ such that 
		\[
		\frac{1}{p}=\frac{\theta}{2}+\frac{1-\theta}{q},
		\]
		therefore,
		\begin{align}\label{eq6.90}
			\int_{\R^d_*} |f|^p \psi ^r d\mu \leq \left(\int_{\R^d_*}(|f|^{\theta p} \psi ^r)^\frac{2}{\theta p}d\mu\right)^\frac{\theta p}{2}\left(\int_{\R^d_*} |f|^q d\mu\right)^\frac{p(1-\theta)}{q}.
		\end{align}
		A computation shows that  $\frac{2r}{\theta p}=2$.
		Indeed, 
		\begin{align*}
			\frac{2r}{\theta p}=\frac{p(2-D)+2D}{\theta p},\qquad \theta p=\left(\frac{q-p}{q} \right)D=\left(\frac{\frac{2D}{D-2}-p}{\frac{2D}{D-2}}\right) D
		\end{align*}
		and so
		\begin{align*}
			\frac{2r}{\theta p}=2.
		\end{align*}
		Thus \eqref{eq6.90} becomes
		\begin{align}\label{eq6.2}
			\int_{\R^d_*} |f|^p \psi ^r d\mu \leq \left(\int_{\R^d_*}f^2 \psi ^2d\mu\right)^\frac{\theta p}{2}\left(\int_{\R^d_*} |f|^q d\mu\right)^\frac{p(1-\theta)}{q}
		\end{align}
		Eventually, bringing everything together, 
		\begin{align}\label{eq6.95}
			\begin{split}
				&\int_C |F|^p d\tilde \mu \overset{\eqref{eq6.4}, \eqref{eq6.2}}{\leq } \left(\int_{\R^d_*}f^2 \psi ^2d\mu\right)^\frac{\theta p}{2}\left(\int_{\R^d_*} |f|^q d\mu\right)^\frac{p(1-\theta)}{q}\overset{\eqref{eq6.3}}{=}\left(\int_C F^2 d\tilde{\mu}\right)^\frac{\theta p}{2} \left(\int_{\R^d_*} |f|^q d\mu\right)^\frac{p(1-\theta)}{q}\\
				& \overset{\eqref{eqrosmon}}{\leq }c\left(\int_C F^2 d\tilde{\mu}\right)^\frac{\theta p}{2} \left(\int_{\R^d_*}|\nabla f|^2 d\mu\right)^\frac{p(1-\theta)}{2}\overset{\eqref{eq6.1}}{\leq }c \left(\int_C \left(\frac{\partial F}{\partial s}\right)^2+|\nabla_\theta F|^2d \tilde \mu+\frac{(D-2)^2}{4}\int_C F^2 d\tilde{\mu}\right)^\frac{p}{2}.
			\end{split}
		\end{align}
		We just proved that \eqref{sobcilinder} holds for all functions in $C^\infty_c(\R^d\setminus\{0\})$.\\
		To sum up, we proved the following sequence of implications.
		\[
		\text{monomial Sobolev inequality }\,\eqref{eqcabros}\Rightarrow\text{Sobolev on the cylinder }\,\eqref{sobcilinder}\Rightarrow\text{monomial CKN inequality }\,\eqref{eqmonckn}
		\]
	\end{proof}
	\begin{remark}
		Observe that \eqref{eqrosmon} holds for functions which need not vanish around the origin. In order to deduce \eqref{eq6.1} and apply integration by parts, we must work with functions which vanish at the origin, being the conformal factor $\psi$ degenerate in zero.
	\end{remark}
	Let us now comment on the range of parameters of $a,b$ for which the inequality \eqref{eqmonckn} holds in the previous proof.
	When we interpolate, we say that $2\leq p\leq q$. This is equivalent to 
	\[
	0\leq b-a\leq 1.
	\]
	In addition, where does the hypothesis $a\neq a_c:= \frac{D-2}{2}$ come into play? This condition indeed is necessary, being the standard CKN not valid for this value of $a$. Since we have 
	\begin{equation*}
		2a=(D-2)-\alpha(n-2)
	\end{equation*}
	then $a=a_c$ if and only if $\alpha =0$. 
	Actually, in our previous proof, $\alpha$ cannot be equal to zero. Indeed, if it were, we would have that \eqref{eqmonckn} woud be equivalent to  \eqref{sobcilinder} with $\alpha=0$, namely,
	\begin{align}\label{eq4.22}
		\left(\int_C |f|^p d\mu_C\right)^{2/p}\leq c\int_C \left(\left(\frac{\partial f}{\partial s}\right)^2+|\nabla _\theta f|^2\right)d\mu_C.
	\end{align}
	However, in \eqref{eq6.95} we proved that \eqref{eqcabros} implies the validity of
	\begin{align*}
		\left(\int_C |f|^p d\mu_C\right)^{2/p}\leq c\left(\int_C \left(\frac{\partial f}{\partial s}\right)^2+|\nabla _\theta f|^2d\mu_C+\frac{(D-2)^2}{4}\int_C f^2 d\mu_C\right)
	\end{align*}
	making it then impossible to conclude  since in the RHS of \eqref{eq4.22} the $L^2$ norm of the function is missing. \\ \ \\
	In the following subsection we prove the lower bound on the solution of \eqref{eqeq} via Markov semigroups. Unlike the standard CKN inequality, we could not prove the essential self adjointness of $L_S$. Therefore, we had to adapt some proofs of \cite{BakryGentilLedoux}.
	\subsection{Lower bound on the solution}\label{applowerboundonthesol}
	This section is devoted to the proof of \eqref{eqlbsol}. It is done in multiple steps using results belonging to the theory of Markov semigroups. Define the semigroup $P_t f$ for all $f\in L^2(S)$ as 
	\[
	P_t f:=\sum_{k=0}^{+\infty} e^{-\lambda_k t}f_k \phi_k \in L^2(S)
	\]
	where $\phi_k\in H^1_0(S)$ is an orthonormal base of eigenfunctions in $L^2(S)$ of the operator $-L_S$, and $\lambda_k$ are the respective eigenvalues. It is known that $\lambda_0=0$, $\lambda_k>0$ for $k\geq 1$ and $\lambda_k\rightarrow +\infty$. Their existence is guaranteed by a classical argument of compactness of the map $T_1:=(-L_S+\mathbb{I})^{{-1}}$ (see \eqref{eqres} for the definition) and the spectral theorem (see \cite{Evans, Brezis}). 
	We have that $\norm {P_t f}_{2}\leq \norm {f}_2$ ($P_t$ is contracting), so that $P_t$ is a linear and continuous operator from $L^2(S)$ to $L^2(S)$. Moreover, $P_t \phi_k= e^{-\lambda _k t} \phi_k$ (by orthogonality of the base). This is enough to prove the semigroup property $P_{t+t'} f=P_t P_{t'} f$.\\
	It is also valid that $P_t f \in H^1_0(S)$ (by the completeness of this space), and that 
	\[
	\mathcal{E}(P_t f):=\int_S \Gamma (P_t f)d\mu_S =\sum_{k=0}^{+\infty} \lambda_k e^{-2\lambda_k t}f_k^2.
	\]
	Indeed, if we call $A_n:=\sum_{k=0}^n e^{-\lambda_k t} f_k \phi_k$ we have that by definition $A_n\xrightarrow{n\rightarrow +\infty} P_t f$ in $L^2(S)$. Also, being $A_n$ a finite sum, $A_n \in H^1_0(S)$ and by orthogonality (and Parseval's identity) $\int \Gamma (A_m-A_n)=\sum_{k=n+1}^m \lambda_k e^{-2\lambda_k t} f_k^2\xrightarrow{n,m \rightarrow +\infty}0$. So, there exists the square of the weak derivative $\Gamma (P_t f)$. Moreover, since $\sum_{k=0}^n \lambda_k e^{-2\lambda _k t} f_k^2=\int \Gamma (A_n)\xrightarrow{n\rightarrow +\infty} \int \Gamma (P_t f)$ (by continuity of the $L^2$ norm) we have
	\[
	\mathcal{E}(P_t f)=\sum_{k=0}^{+\infty} \lambda_k e^{-2\lambda _k t }f_k^2.
	\] 
	\subsubsection{Notable properties of $P_t$}\ \\
	In this part we prove the boundedness of $P_t$ from $L^1(S)$ to $L^\infty(S)$ and some other useful properties.
	Let us define
	\[
	\Lambda (t):=\int_S (P_t f)^2 d\mu_S=\sum_{k=0}^{+\infty}e^{-2\lambda_k t} f_k^2.
	\]
	$\Lambda (t)$ is differentiable for all $t>0$ by Weierstrass theorem and 
	\begin{equation}\label{lamprim}
		\Lambda '(t)=-2\mathcal{E}(P_t f).
	\end{equation}
	\eqref{lamprim} is enough to apply Theorem 4.2.5 in \cite{BakryGentilLedoux} which affirms that 
	\begin{equation}\label{eqvar}
		\text{Var}(P_t f)\leq e^{-tD}\text{Var}(f)\qquad\forall f\in L^2(S)
	\end{equation}
	where $\text{Var}(h):=\int_S h^2 d\mu_S-\left(\int_S h d\mu_S\right)^2$.\\
	Observe now that Sobolev inequality implies Nash's inequality. Indeed, it is sufficient to interpolate between $1$ and $p=\frac{2n}{n-2}$. We have that 
	\[
	\frac{1}{2}=\theta \frac{n-2}{2n}+(1-\theta)\qquad\text{with }\theta=\frac{n}{n+2}
	\]
	and so 
	\[
	\norm{f}_2\leq \norm{f}_p^\theta \norm{f}_1^{1-\theta}=\norm{f}_p^\frac{n}{n+2}\norm{f}_1^\frac{2}{n+2}.
	\]
	Applying now Sobolev's inequality we get the following Nash's inequality
	\begin{equation}\label{nash}
		\norm{f}_2^{n+2}\leq \left[A \norm{f}_2^2+C\mathcal{E}(f)\right]^{\frac{n}{2}}\norm{f}_1^2.
	\end{equation}
	Using the equality \eqref{lamprim} we can rewrite \eqref{nash} applied to $P_t f$ for $f>0$ and $\int_S f d\mu_S =\int_S P_t f d \mu_S=1$ as 
	\[
	\Lambda (t)\leq \left[A\Lambda (t)-\frac{C}{2}\Lambda'(t)\right]^\frac{n}{n+2}.
	\]
	This implies, from Gr\"onwall's lemma, that the function $e^{\lambda t}(1-A\Lambda^{-r}(t))$ (with $r=\frac{n}{2}$ and $\lambda=\frac{2Ar}{C}$) is decreasing in $t\geq 0$. As a consequence
	\[
	\Lambda(t)\leq \left(\frac{A}{1-e^{-\lambda t}}\right)^{n/2}
	\]
	which in turns implies that $P_t$ is bounded from $L^1(S)$ to $L^2(S)$. From duality we have that $P_t^*$ is bounded from $L^2$ to $L^\infty$. Also, $P_t^*=P_t$ since, for each $w,v\in L^2(S) $, we deduce
	\[
	(P_t^* w, v)_{L^2}=(w,P_t v)_{L^2}=\sum_k e^{-\lambda_k t}w_k v_k=(P_t w, v)_{L^2}.
	\]
	By composition, and using the property of semigroups, we get that $P_{2t}$ is bounded from $L^1(S)$ to $L^\infty(S)$. 
	Define $P_t^0 f:=P_t f-\int_S f d\mu_S $. We want to prove that it is bounded from $L^1(S)$ to $L^\infty(S)$ with norm less equal than $C^2e^{-Dt}$.
	Indeed, from the bound of $P_t$ from $L^1(S)$ to $L^\infty(S)$, we deduce also that $\norm{P_1^0}_{1,\infty}\leq C$, and therefore $\norm{P_1^0}_{2,\infty}\leq C$, $\norm{P_1^0}_{1,2}\leq C$. Moreover, \eqref{eqvar} tells us that $\norm{P_t^0 f}_2\leq e^{-tD}\norm{f}_2$. Using now the semigroup property we get $P_{2+t}^0=P_1^0\circ P^0_t \circ P^0_1$ and so that $\norm{P^0_{2+t}}_{1,\infty}\leq C^2 e^{-tD}$.\\
	Fix $f\in L^2(S)$, $f>0$. We can also see $P_t$ as an Hilbert-Schmidt operator. Call its kernel $p_t(x,y)$. The bound on the $1,\infty$ norm of $P^0_{2+t}$ tells us that 
	\begin{align*}
		|P^0_{2+t} f|=\left|\int_S [p_{t+2}(x,y)-1] f(y)d\mu_S(y)\right|\leq C ^2e^{-Dt} \int_S f(y) d\mu_S(y)\qquad \text{for almost every }x\in \R^d_*
	\end{align*}
	Which yields in particular that, for each $t>T$, with $T$ sufficiently big,
	\begin{align}\label{eqlbou}
		\begin{split}
			c\int_S f(y)d\mu_S(y)&\leq(1-C^2e^{-Dt})\int_S f(y)d\mu_S(y) \\ &\leq\int_S p_{t+2}(x,y)f(y)d\mu_S(y)=P_{2+t}f\qquad \text{for almost every }x\in \R^d_*.
		\end{split}
	\end{align}
	with $c$ independent of $t>T$.\\
	\subsubsection{The operator $T_\lambda$ and the conclusion}\ \\
	Fix now $\lambda >0$ and consider the operator 
	\begin{equation}\label{eqres}
		T_\lambda:=(-L+\lambda \mathbb{I})^{-1}
	\end{equation} defined from $L^2(S)$ to $H^1_0(S)$ such that for each $f\in L^2(S)$, $T_\lambda f=u$ is the only function in $H^1_0(S)$ for which
	\[
	\int_S \Gamma(u,\psi)d \mu_S+\lambda \int_S u \psi d\mu_S=\int_S f \psi d\mu_S.\qquad\forall \psi\in H^1_0(S).
	\] 
	Consider now the operator $G_\lambda:=\int_{0}^{\infty} P_t\, e^{-\lambda t}dt$. 
	For each $f\in L^2(S)$ we have 
	\begin{equation}\label{normG}
		\norm{\int_{0}^{\infty} P_t f e^{-\lambda t}dt}_2\leq \int_{0}^{\infty}\norm{P_t f}_2e^{-\lambda t}dt\leq \int_{0}^{\infty}\norm{f}_2e^{-\lambda t}dt=\frac{1}{\lambda}\norm{f}_2
	\end{equation}
	so that $G_\lambda$ is bounded from $L^2(S)$ to $L^2(S)$. Also, $T_\lambda$ is continuous from $L^2(S)$ to $L^2(S)$. We now wish to prove that
	\begin{equation}\label{eqG=T} 
		G_\lambda=T_\lambda
	\end{equation}	
	By continuity and linearity, it is enough to prove that $T_\lambda \phi_k =G_\lambda  \phi _k$ for each $k$ i.e. that 
	\begin{equation*}\label{eq1}
		\int_S \Gamma(\int_{0}^{\infty} P_t \phi_k e^{-\lambda t}dt,\psi)d \mu_S+\lambda \int_S \left(\int_{0}^{\infty} P_t \phi_k e^{-\lambda t}dt\right) \psi d\mu_S=\int_S \phi_k \psi d\mu_S. \qquad\forall \psi\in H^1_0(S).
	\end{equation*}
	However, $P_t \phi_k=e^{-\lambda_k t}\phi_k$, so 
	\[
	\int_{0}^{\infty} P_t \phi_k e^{-\lambda t}dt=\phi_k \frac{1}{\lambda_k+\lambda}.
	\]
	Being $\phi_k$ eigenfunction we deduce that 
	\[
	\int_S \Gamma(\int_{0}^{\infty} P_t \phi_k e^{-\lambda t}dt,\psi)d \mu_S=\frac{\lambda_k }{\lambda+\lambda_k}\int_S \phi_k \psi d \mu_S
	\]
	so \eqref{eqG=T} is proved.
	In particular, \eqref{eqG=T} and \eqref{normG} allow us to deduce that
	\begin{equation}\label{eqnormT}
		\norm{T_\lambda}_{2,2}\leq \frac{1}{\lambda}
	\end{equation} We have now all the ingredients to prove \eqref{eqlbsol}. Since $v$ is solution to \eqref{eqeq}, we have that $v=(-L+\frac{1}{A_q}\mathbb{I})^{-1}( \frac{1}{A_q}v^{q-1})$. Define $\lambda:= \frac{1}{A_q}$. Applying \eqref{eqlbou} we know that there exists a $T>0$ such that
	\begin{align}\label{eq4.33}
		v=(-L+\lambda \mathbb{I})^{-1}(\frac{1}{A_q}v^{q-1})=\frac{1}{A_q}\int_{0}^{\infty}P_t v^{q-1}e^{-\lambda t}dt \overset{\eqref{eqlbou}}{\geq}\frac{c}{A_q} \int_S v^{q-1}(y)d\mu_S(y)\int_{T}^{\infty}e^{-\lambda t}dt=c_1
	\end{align}
	finding \eqref{eqlbsol}, the thesis.
	Actually, \eqref{eq4.33} relies also on the positivity preserving property (to deduce that $\int_{0}^{T}P_t v^{q-1}e^{-\lambda t}dt \geq 0$). Namely that if $f\geq 0$ then $P_t f\geq 0$. 
	Let us prove it briefly in the following paragraph.\\
	\subsubsection{Positivity preserving}\ \\
	First, let us prove the following identity
	\begin{equation}\label{eqidop}
		P_t f=\lim_{\lambda \rightarrow +\infty} e^{-\lambda t}\sum_{k\in \N}\left(\frac{t^k}{k!}(\lambda ^2 T_\lambda )^k f \right) =:A_t f\qquad f\in L^2(S)
	\end{equation}
	where $T_\lambda$ is defined as in \eqref{eqres} and the limit has to be intended as a limit in $L^2(S)$, for each $f$ fixed.
	By definition $T_\lambda \phi_j=\frac{1}{\lambda +\lambda_j}\phi_j$ and so 
	\[
	\sum_{k\in \N}\frac{t^k}{k!}\lambda ^{2k} T_\lambda^k \phi_j =e^{\frac{\lambda^2 t}{\lambda+\lambda_j}} \phi_j.
	\]
	Let us call $M_{\lambda,t}:=\sum_{k\in \N}\frac{t^k}{k!}\lambda ^{2k}T_\lambda ^k$. \eqref{eqnormT} allows us to deduce that $M_{\lambda,t} $ is a continuous linear operator from $L^2$ to $L^2$ whose norm is 
	\begin{equation*}
		\norm{M_{\lambda,t}}_{2,2}\leq e^{\lambda t}.
	\end{equation*}
	Take a function $f=\sum_{j\in \N} c_j \phi_j\,\, \in L^2(S)$. By continuity and linearity of $M_{\lambda,t}$ we deduce that \begin{equation*}
		M_{\lambda,t} f=\sum_j^\infty c_j M_{\lambda,t} \phi_j=\sum_j^\infty c_j e^{\frac{\lambda^2 t}{\lambda +\lambda_j}}\phi_j \qquad\text{in }L^2(S).
	\end{equation*}
	Eventually, by the orthogonality of the base $\phi_j$,
	\begin{align*}
		\norm{e^{-\lambda t}M_{\lambda,t} f-P_t f}^2_2=\norm{\sum_j^\infty c_j e^{\frac{-\lambda \lambda_j t}{\lambda +\lambda_j}}\phi_j-\sum_j^\infty c_j e^{-\lambda_j t}\phi_j}^2_2=\sum_j^\infty c_j^2 \left(e^{\frac{-\lambda \lambda_j t}{\lambda +\lambda_j}}-e^{-\lambda_j t}\right)^2\xrightarrow{\lambda \rightarrow \infty}0
	\end{align*}
	Proving \eqref{eqidop}.
	\\
	Let us now make an observation on the stability of the space $H^1_0(S)$. Take $\psi:\R\rightarrow\R$ a smooth function such that $\psi(0)=0$ and $|\psi'|\leq 1$. It is then easy to see that if $u\in H^1_0(S)$ then $\psi(u)\in H^1_0(S)$. Also, 
	\[
	\mathcal{E}(\psi(u))=\int_S |\psi'(u)|^2\Gamma_S(u)d\mu_S
	\]
	and so
	\begin{equation*}
		\mathcal{E}(\psi(u))\leq \mathcal{E}(u)\qquad\forall u\in H^1_0(S)
	\end{equation*}
	which in turn implies
	\begin{equation}\label{eqpopsi}
		\mathcal{E}(u-\psi(u),u)\geq0.
	\end{equation}
	Observe that, by approximation, \eqref{eqpopsi} is also valid for $\psi(r)=\min\{r,1\}$. 
	Our goal is to prove that if $f\leq 1$, then $P_t f \leq 1$.\\
	From the representation \eqref{eqidop} it is sufficient to prove that $h=\lambda T_\lambda (f)=T_\lambda(\lambda f)\leq 1$. By definition of $T_\lambda$ it holds
	\begin{equation*}
		\int _S\Gamma_S(h,\varphi)+\lambda h\varphi\, d\mu_S=\lambda \int_S f\varphi\, d\mu_S \qquad\forall \varphi\in H^1_0(S).
	\end{equation*}
	If we then select $\varphi=h-\psi(h)$ we get
	\begin{align*}
		\lambda \int_S h(h-\psi(h))d\mu_S=&-\mathcal{E}(h-\psi(h),h)+\lambda \int_S f(h-\psi(h))d\mu_S\\
		\overset{\eqref{eqpopsi}}{\leq}&\lambda \int_S f(h-\psi(h))\, d\mu_S.
	\end{align*}
	Eventually, 
	\begin{equation}\label{eq4.19}
		\int_S (f-h)(h-\psi(h))d\mu_S\geq 0
	\end{equation}
	and take $\psi(r)=\min\{r,1\}$. Then \eqref{eq4.19} becomes
	\begin{equation*}
		\int_{\{h\geq 1\}}(f-h)(h-1)d\mu_S\geq 0
	\end{equation*}
	and since $f\leq 1$ we deduce that $\mu_S(\{h\geq 1\})=0$, our thesis.\\
	This is enough to conclude since, if $f\geq 0$, then $w:=-f+1\leq 1$ and by what we just proved $P_t w \leq 1$. But the fact that $P_t$ is linear and $P_t \mathbf{1}=\mathbf{1}$ allows us to conclude that $P_t f\geq 0$.
	\subsection{Curvature dimension condition on $\mathbb{S}^{d-1}_*$}\label{subseccdsphere}\ \\
	In this last section we prove the curvature dimension condition $CD(D-2,D-1)$ on $\mathbb{S}^{d-1}_*$. Recall that $d\mu_{\mathbb{S}^{d-1}_*}:=\theta^A dV_{\mathbb{S}^{d-1}}=e^{-W_\theta} dV_{\mathbb{S}^{d-1}}$. Therefore, using the stereographic projection from the north pole of the sphere $\mathbb{S}^{d-1}$ and calling $\varphi=\frac{1+|x|^2}{2}$, with $x\in \R^{d-1}$, we have 
	\begin{align*}
		&\mathbb{S}^{d-1}\ni(\theta_1,...,\theta_{d-1},\theta_{d})=\left(\frac{x_1}{\varphi},..., \frac{x_{d-1}}{\varphi},\frac{|x|^2-1}{|x|^2+1}\right),\\
		&W_\theta=|A|\log \varphi-\sum_i A_i\log x_i.
	\end{align*}
	Let us now evaluate the following quantities. From now on $f$ is a smooth function on $\mathbb{S}^{d-1}$.
	\begin{align*}
		&\Gamma^\theta(W_\theta,f)=\varphi^2\nabla W_\theta \cdot \nabla f=|A|\varphi x\cdot \nabla f-\varphi^2\sum_i \frac{A_i}{x_i}\frac{\partial f}{\partial x_i},\\
		&\Gamma^\theta(\Gamma^\theta(W_\theta,f),f)=|A|\varphi^2(x\cdot \nabla f)^2+|A|\varphi^3 \nabla (x\cdot \nabla f)\cdot \nabla f\\
		&-2\varphi^3\left(\sum_i \frac{A_i}{x_i}\frac{\partial f}{\partial x_i}\right)x\cdot \nabla f+\varphi^4\sum_i \frac{A_i}{x_i^2}\left(\frac{\partial f}{\partial x_i}\right)^2-\varphi^4\sum_i \frac{A_i}{x_i}\nabla \left(\frac{\partial f}{\partial x_i}\right)\cdot \nabla f\\
		&=|A|\varphi^2(x\cdot \nabla f)^2+|A|\varphi^3(\nabla ^2 f(x,\nabla f)+|\nabla f|^2)\\
		&-2\varphi^3\left(\sum_i \frac{A_i}{x_i}\frac{\partial f}{\partial x_i}\right)x\cdot \nabla f+\varphi^4\sum_i \frac{A_i}{x_i^2}\left(\frac{\partial f}{\partial x_i}\right)^2-\varphi^4\sum_i \frac{A_i}{x_i}\nabla \left(\frac{\partial f}{\partial x_i}\right)\cdot \nabla f,
	\end{align*}
	Denote by $\{e_1,...,e_{d-1}\}$ the canonical basis of $\R^{d-1}$. Then,
	\begin{align*}
		&\Gamma^\theta(\frac{\Gamma^\theta(f)}{2},W_\theta)=\frac{1}{2}\varphi^2\nabla (\varphi^2 |\nabla f|^2)\cdot \left(\frac{|A|x}{\varphi}-\sum_i\frac{A_i e_i}{x_i} \right)\\
		&=|A|\varphi^3|\nabla f|^2(\frac{|x|^2}{\varphi}-1)+\frac{1}{2}\varphi^4 \nabla (|\nabla f|^2)\cdot \left(\frac{|A|x}{\varphi}-\sum_i\frac{A_i e_i}{x_i} \right)\\
		&=|A|\varphi^3|\nabla f|^2(\frac{|x|^2}{\varphi}-1)+|A|\varphi^3\nabla ^2f (\nabla f,x)-\varphi^4\sum_i \frac{A_i}{x_i}\nabla \left(\frac{\partial f}{\partial x_i}\right)\cdot \nabla f.
	\end{align*}
	It can be proved that in general it holds, even for arbitrary smooth functions $W_\theta$ and $f$, that (see \cite{BakryGentilLedoux})
	\begin{equation*}
		\nabla ^2_\theta W_\theta(\nabla_\theta f, \nabla_\theta f)=\Gamma^\theta(\Gamma^\theta(W_\theta,f),f)-\Gamma^\theta(\frac{\Gamma^\theta(f)}{2},W_\theta).
	\end{equation*}
	In our case in particular we have
	\begin{align}\label{eq4.3}
		\begin{split}
			\nabla ^2_\theta W_\theta(\nabla f, \nabla f)&=|A|\varphi^2|\nabla f|^2+|A|\varphi^2(x\cdot \nabla f)^2 -2\varphi^3\left(\sum_i \frac{A_i}{x_i}\frac{\partial f}{\partial x_i}\right)x\cdot \nabla f+\varphi^4\sum_i \frac{A_i}{x_i^2}\left(\frac{\partial f}{\partial x_i}\right)^2\\
			&=|A|\varphi^2|\nabla f|^2+\frac{1}{|A|}\Gamma^\theta(W_\theta,f)^2-\frac{\varphi^4}{|A|}\left(\sum_i\frac{A_i}{x_i}\frac{\partial f}{\partial x_i}\right)^2+\varphi^4\sum_i \frac{A_i}{x_i^2}\left(\frac{\partial f}{\partial x_i}\right)^2\\
			&=|A|\varphi^2|\nabla f|^2+\frac{1}{|A|}\Gamma^\theta(W_\theta,f)^2-\varphi^4 |A|\left(\sum_i\frac{A_i}{|A|}\frac{1}{x_i}\frac{\partial f}{\partial x_i}\right)^2+\varphi^4\sum_i \frac{A_i}{x_i^2}\left(\frac{\partial f}{\partial x_i}\right)^2\\
			&\geq |A|\varphi^2|\nabla f|^2+\frac{1}{|A|}\Gamma^\theta(W_\theta,f)^2=|A|\Gamma^\theta(f)+\frac{1}{|A|}\Gamma^\theta(W_\theta,f)^2
		\end{split}
	\end{align}
	where the last inequality is a consequence of the convexity of the map $x\mapsto x^2$ since $|A|=\sum_i A_i$. To conclude, recalling that 
	\[
	\text{Ric}_\theta(\nabla _\theta f,\nabla _\theta f)=(d-2)\Gamma^\theta (f),
	\]
	it is enough to use Bochner's formula and Cauchy-Schwartz's inequality to have
	\begin{align*}
		&\Gamma_2^\theta(f)-(D-2)\Gamma^\theta(f)-\frac{1}{D-1}(L_\theta f)^2\\
		&=||\nabla_\theta^2f||^2+\text{Ric}_\theta(\nabla_\theta f,\nabla_\theta f)+\nabla ^2_\theta W_\theta(\nabla_\theta f, \nabla_\theta f)-(D-2)\Gamma^\theta(f)-\frac{1}{D-1}(L_\theta f)^2\\
		&=||\nabla_\theta^2f||^2+\nabla ^2_\theta W_\theta(\nabla_\theta f, \nabla_\theta f)-|A|\Gamma^\theta(f)-\frac{1}{D-1}(L_\theta f)^2\\
		&\geq\frac{1}{d-1}(\Delta_\theta f)^2 +\nabla ^2_\theta W_\theta(\nabla_\theta f, \nabla_\theta f)-|A|\Gamma^\theta(f)-\frac{1}{D-1}(L_\theta f)^2\\
		&\overset{\eqref{eq4.3}}{\geq}\frac{1}{d-1}(\Delta_\theta f)^2 +\frac{1}{|A|}\Gamma^\theta(W_\theta,f)^2-\frac{1}{D-1}\big((\Delta_\theta f)^2+\Gamma^\theta(W_\theta,f)^2-2\Delta_\theta f \Gamma^\theta(W_\theta,f)\big)\\
		&=\frac{|A|}{(d-1)(D-1)}(\Delta_\theta f)^2-\frac{2}{D-1} \Delta_\theta f \Gamma^\theta(W_\theta,f)+\frac{d-1}{(D-1)|A|}\Gamma^\theta(W_\theta,f)^2\\
		&=\frac{1}{D-1}\left(\sqrt{\frac{|A|}{d-1}}\Delta_\theta f-\sqrt{\frac{d-1}{|A|}}\Gamma^\theta(W_\theta,f)\right)^2\geq 0,
	\end{align*}
	the thesis.
	\section{Acknowledgements}
	\lowcotwo\ This is a low-co2 research paper: \lowcotwourl[v1]. This research
	was developed, written, submitted and presented without the use of air travel.\\
	I would like to thank my advisor Louis Dupaigne for the precious discussions we had and all the advice he gave me during this year. Without him this work would not have been possible. I also would like to thank Nikita Simonov for the many references that he recommended and his fruitful suggestions. At last, I would like to thank Simon Zugmeyer and Ivan Gentil for having allowed me to read their paper in advance.\\ 
	This project has received funding from the European Union's Horizon Europe research and innovation programme under the Marie Sklodowska-Curie grant agreement No 101126554.
	\section{Disclaimer}
	Co-Funded by the European Union. Views and opinions expressed are however those of the author only and do not necessarily reflect those of the European Union. Neither the European Union nor the granting authority can be held responsible for them.
	
\end{document}